\newtheorem{thm}{Theorem}[section]
\newtheorem{cor}[thm]{Corollary}
\newtheorem{prop}[thm]{Proposition}
\newtheorem{lem}[thm]{Lemma}
\newcommand{\varep}{\varepsilon}
\theoremstyle{definition}
\newtheorem{defn}[thm]{Definition}
\newtheorem{exas}[thm]{Example}
\newtheorem{rem}[thm]{Remark}
\let\phi\varphi
\begin{document}
\title{Realizing corners of Leavitt path algebras as Steinberg algebras, with corresponding connections to graph $C^*$-algebras}
\maketitle
\begin{center}
G.~Abrams\footnote{Department of Mathematics, University of Colorado, Colorado Springs,
Colorado, USA. E-mail address: \texttt{abrams@math.uccs.edu}}, M. Dokuchaev\footnote{Instituto de Matem\'{a}tica e Estat\'{i}stica, Universidade de S\~{a}o Paulo, Caixa
Postal 66281, S\~{a}o Paulo, SP 05315-970, Brazil. E-mail address: \texttt{dokucha@gmail.com}} and 
T.\,G.~Nam\footnote{Institute of Mathematics, VAST, 18 Hoang Quoc Viet, Cau Giay, Hanoi, Vietnam. E-mail address: \texttt{tgnam@math.ac.vn}

\ \ {\bf Acknowledgements}:   
The second author was partially supported by CNPq of Brazil 
Proc. 307873/2017-0 and by Fapesp of Brazil Proc. 2015/09162-9. The third author was supported by FAPESP of Brazil Proc. 2018/06538-6.} 
\end{center}

\begin{abstract} We show that the endomorphism ring of any nonzero finitely generated projective module over the  Leavitt path algebra $L_K(E)$ of an arbitrary graph $E$ with coefficients in a field $K$ is isomorphic to a Steinberg algebra. This yields in particular that every nonzero corner of the Leavitt path algebra of an arbitrary graph is isomorphic to a Steinberg algebra.  This in its turn gives that every $K$-algebra with local units  which is Morita equivalent to the Leavitt path algebra of  a row-countable graph is  isomorphic to a Steinberg algebra. Moreover, we prove that a corner  by a projection of a $C^*$-algebra of a countable graph is isomorphic to the $C^*$-algebra of an ample groupoid.
\medskip


\textbf{Mathematics Subject Classifications}: 16S99;  05C25; 46L

\textbf{Key words}: Leavitt path algebra; Morita equivalence; Steinberg algebra; graph $C^*$-algebra; groupoid $C^*$-algebra.
\end{abstract}

\section{Introduction and Preliminaries}
Given a row-finite directed graph $E$ and any field $K$, the first author and Aranda Pino in \cite{ap:tlpaoag05}, and independently Ara, Moreno, and Pardo in \cite{amp:nktfga}, introduced the \emph{Leavitt path algebra} $L_K(E)$.  The first author and Aranda Pino later extended the definition in \cite{ap:tlpaoag08} to all countable directed graphs. Goodearl in \cite{g:lpaadl} extended the
notion of Leavitt path algebras $L_K(E)$ to all (possibly uncountable) directed graphs $E$. Leavitt path algebras generalize the Leavitt algebras $L_K(1, n)$ of \cite{leav:tmtoar}, and also contain many other interesting classes of algebras. In addition, Leavitt path algebras are intimately related to graph $C^*$-algebras (see \cite{r:ga}). During  the past fifteen years, Leavitt path algebras have become a topic of intense investigation by mathematicians from across the mathematical spectrum.  
For a detailed history and overview of  Leavitt path algebras we refer the reader to the survey article  \cite{a:lpatfd}.  

One of the interesting questions in the theory of Leavitt path algebras is to find relationships between graphs $E$ and $F$ such that their corresponding Leavitt path algebras are Morita equivalent.
This question has been investigated in numerous articles, see e.g. \cite{alps:fiitcolpa} and \cite{RT}.  
In large part this research effort has been motivated by the goal of resolving
the ``Morita equivalence conjecture": if the Leavitt path algebras of the graphs $E$ and $F$ are Morita equivalent, must the corresponding graph $C^*$-algebras for $E$ and $F$  also be 
(strongly) Morita equivalent? (see \cite[p. 3758]{at:iameoga}). 

If $e$ is a nonzero idempotent in the ring $A$, then the ``corner of $A$ generated by $e$ is the unital ring $eAe$.   Since Morita equivalence passes to (full) corners, it may be germane to the resolution of  the Morita equivalence conjecture to understand situations 
in which a corner of a Leavitt path algebra is again a Leavitt path algebra, or at least Morita equivalent to a Leavitt path algebra. In \cite{an:colpaofgalpa} the first and third authors proved that any corner of the Leavitt path algebra $L_K(E)$ of a finite graph $E$ is isomorphic to a Leavitt path algebra $L_K(F)$, and $F$ may be obtained from $E$ via a sequence of well-understood ``graph transformations".   
This result notwithstanding, it turns out that a corner of a Leavitt path algebra of an arbitrary graph $E$ need not in general be isomorphic to a Leavitt path algebra (see Example \ref{cornernotleav}).  However, as a consequence of our main result (Theorem \ref{End(Q)general}), we establish in Corollary \ref{cornercor} that every  corner of such Leavitt path algebras is in fact isomorphic to an algebra of a more general type, to wit, a Steinberg algebra.  

Steinberg algebras were introduced by Steinberg in \cite{stein:agatdisa}, and independently Clark et al.  in \cite{cfst:aggolpa}.   We begin by establishing that the corner algebra (or ``corner-like" algebra) of a Steinberg algebra generated by idempotents of a special form 
is again a Steinberg algebra (Proposition \ref{cornerStein}). Consequently, we obtain that corners of Leavitt path algebras of arbitrary graphs which arise as sums of finitely many distinct vertices are isomorphic to Steinberg algebras (Theorem \ref{gecorthem} and Corollary \ref{limofcor}). 

Let $Q$ denote a finitely generated projective left module over the Leavitt path algebra $L_K(E)$ for an arbitrary graph $E$ and field $K$.  Using  Theorem \ref{gecorthem}, we are subsequently able to establish our main result, that the endomorphism ring ${\rm End}_{L_K(E)}(Q)$ 
 is isomorphic to a Steinberg algebra (Theorem \ref{End(Q)general}). To do so, we use the theorem of Ara-Goodearl, and independently Hay et al.  (Theorem \ref{AG-HLMRTthm}) to have a description up to isomorphism of $Q$ as a finite direct sum of left $L_K(E)$-modules  of the forms $L_K(E)v$ and $L_K(E)(w-\sum_{e\in T}ee^*)$, where $v\in E^0$, $w\in E^0$ is an infinite emitter and $T$ is a non-empty finite subset of $s^{-1}(w)$. To analyze terms  of the form $L_K(E)(w-\sum_{e\in T}ee^*)$, we  extend to arbitrary graphs a result of the first author and his co-authors \cite[Theorem 2.8]{aalp:tcqflpa} about the out-split graph. By a sequence of out-splittings we get a graph $G$,  and obtain a description up to isomorphism of the $L_K(E)$-module $Q$ in terms of cyclic projective modules of the form $L_K(G)v$ where $v\in G^0$ (Proposition \ref{cateiso}). 
This result combined with 
 Theorem \ref{gecorthem} then establish Theorem \ref{End(Q)general}.   We note that the ample groupoid whose Steinberg algebra is isomorphic to the given corner of the Leavitt path algebra can be explicitly described.  
Theorem \ref{End(Q)general} has as an immediate consequence that any  corner of a Leavitt path algebra of an arbitrary graph is isomorphic to a Steinberg algebra (Corollary \ref{cornercor}).  As well, Theorem \ref{End(Q)general} and Corollary \ref{limofcor}  yield  Theorem \ref{moquiLeavitt}, showing that every $K$-algebra with local units that is Morita equivalent to the Leavitt path algebra of a row-countable graph  is   isomorphic to a Steinberg algebra. 

The development of Leavitt path algebras has both been a guide for, and been guided by, investigations into structures known as {\it graph} $C^*$-{\it algebras}.   As we remind the reader in Section 4, for a directed graph $E$, the Leavitt path algebra $L_{\mathbb{C}}(E)$ sits as a dense $\ast$-subalgebra in the graph $C^*$-algebra $C^*(E)$. As has been demonstrated by a number of remarkable results in both subjects over the past fifteen years, there is a very tight, although not completely well-understood, relationship between these two mathematical objects.  We find another example of this tight connection in the current work.   Specifically, by employing the same general approach as that used in the proof of  Theorem \ref{End(Q)general}, we establish in Theorem \ref{CorC-algtheo}  that a corner by a projection of a graph $C^*$-algebra of a countable graph is isomorphic to the $C^*$-algebra of an ample groupoid. We note that Arklint and Ruiz \cite{ar:cocka},  and Arklint, Gabe and Ruiz  \cite{ArklintGabeRuiz}  have established (among many other things) a specific case of this result.  To wit:  when $E$ is a countable graph having finitely many vertices, then any corner by a projection $p$ of $C^*(E)$ is in fact isomorphic  as $C^*$-algebras to a graph $C^*$-algebra;  that is, when $E$ is a countable graph having finitely many vertices, then $pC^*(E)p \cong C^*(F)$ for some graph $F$.

\medskip

We now present a streamlined version of the necessary background ideas.   We refer the reader to \cite{AF} for information about general ring-theoretic constructions, and to  \cite{AAS}  for additional information about Leavitt path algebras.

A (directed) graph $E = (E^0, E^1, s, r)$ 
consists of two disjoint sets $E^0$ and $E^1$, called \emph{vertices} and \emph{edges}
respectively, together with two maps $s, r: E^1 \longrightarrow E^0$.  The
vertices $s(e)$ and $r(e)$ are referred to as the \emph{source} and the \emph{range}
of the edge~$e$, respectively. A graph $E$ is called \emph{row-finite} if $|s^{-1}(v)|< \infty$ for all $v\in E^0$. 
A graph $E$ is called \emph{row-countable} if there are at most
countable arrows starting at any vertex. A graph $E$ is called \textit{countable} if both sets $E^0$ and $E^1$ are countable. 
A graph $E$ is \emph{finite} if both sets $E^0$ and $E^1$ are finite.  
A vertex~$v$ for which $s^{-1}(v)$ is empty is called a \emph{sink}; a vertex~$v$ is \emph{regular} if $0< |s^{-1}(v)| < \infty$; and a vertex~$v$ is an \textit{infinite emitter} if $|s^{-1}(v)| = \infty$.

A \emph{path of length} $n$ in a graph $E$ is a sequence $p = e_{1} \cdots e_{n}$  of edges $e_{1}, \dots, e_{n}$ such that $r(e_{i}) = s(e_{i+1})$ for $i = 1, \dots, n-1$.  In this case, we say that the path~$p$ starts at the vertex $s(p) := s(e_{1})$ and ends at the vertex $r(p) := r(e_{n})$, we write $|p| = n$ for the length of $p$.  We consider the elements of $E^0$ to be paths of length $0$. We denote by $E^{*}$ the set of all paths in $E$. An \textit{infinite path} in $E$ is an infinite sequence $p = e_1 \cdots e_n\cdots$ of edges in $E$ such that $r(e_i) = s(e_{i+1})$ for all $i\geq 1$. In this case, we say that the infinite path $p$ starts at the vertex $s(p) := s(e_1)$. We denote by $E^{\infty}$ the set of all infinite paths in $E$.

For an arbitrary graph $E = (E^0,E^1,s,r)$
and any  field $K$, the \emph{Leavitt path algebra} $L_{K}(E)$ {\it of the graph}~$E$
\emph{with coefficients in}~$K$ is the $K$-algebra generated
by the union of the set $E^0$ and two disjoint copies of $E^1$, say $E^1$ and $\{e^*\mid e\in E^1\}$, satisfying the following relations for all $v, w\in E^0$ and $e, f\in E^1$:
\begin{itemize}
\item[(1)] $v w = \delta_{v, w} w$;
\item[(2)] $s(e) e = e = e r(e)$ and
$r(e) e^* = e^* = e^*s(e)$;
\item[(3)] $e^* f = \delta_{e, f} r(e)$;
\item[(4)] $v= \sum_{e\in s^{-1}(v)}ee^*$ for any regular vertex $v$;
\end{itemize}
where $\delta$ is the Kronecker delta.

For any path $p= e_1e_2\cdots e_n$,  the element $e^*_n\cdots e^*_2e^*_1$ of $L_K(E)$ is denoted by $p^*$.   It can be shown (\cite[Lemma 1.7]{ap:tlpaoag05}) that $L_K(E)$ is  spanned as a $K$-vector space by $\{pq^* \mid p, q\in E^*, r(p) = r(q)\}$.  Indeed,  $L_K(E)$ is a $\mathbb{Z}$-graded $K$-algebra:  
$L_K(E)= \bigoplus_{n\in \mathbb{Z}}L_K(E)_n$,  where for each $n\in \mathbb{Z}$, the degree $n$ component $L_K(E)_n$ is the set \ $ \text{span}_K \{pq^*\mid p, q\in E^*, r(p) = r(q), |p|- |q| = n\}$.

\begin{rem}\label{oneedge&cor}  Let $K$ be any field, $E$ any graph and $H$ a finite subset of $E^0$.
	
(1) If $v \in E^0$ and $s^{-1}(v)$ is a single edge (say  $s^{-1}(v)= \{f\}$), then $ff^* = v$. 
	
(2)	$(\sum_{v\in H}v)L_K(E)(\sum_{v\in H}v)$ is  spanned as a $K$-vector space by $\{pq^*\mid p, q\in E^*, r(p) = r(q) \text{ and } s(p) , s(q) \in H\}$.
\end{rem}

A graph morphism from a graph $E$ to a graph $F$ is a pair $\phi = (\phi^0, \phi^1)$ consisting of maps $\phi^0: E^0\longrightarrow F^0$ and $\phi^1: E^1\longrightarrow F^1$ such that $s_F\phi^1 = \phi^0s_E$ and $r_F\phi^1 = \phi^0r_E$. A graph morphism $\phi: E\longrightarrow F$ is a \textit{CK-morphism} (short for \textit{Cuntz-Krieger morphism}) provided
\begin{itemize}
\item[(1)] $\phi^0$ and $\phi^1$ are injective;
\item[(2)] for any regular vertex $v\in E^0$, $\phi^1$ induces
a bijection $s^{-1}_E(v)\longrightarrow s^{-1}_F(\phi^0(v))$.
\end{itemize}
In particular, condition (1) says that $\phi$ maps $E$ isomorphically onto a subgraph of $F$, while condition (2) implies that $\phi^0$ must send regular vertices to regular vertices. If $E$ is row-finite, injectivity of $\phi^0$ together with condition (2) is sufficient to ensure injectivity of $\phi^1$. Thus, in this case, $\phi$ is a CK-morphism if and only if it is a \textit{complete graph homomorphism} in the sense of \cite[p. 161]{amp:nktfga}.

A subgraph $E$ of a graph $F$ is called a {\it CK-subgraph} in case the inclusion map $E\longrightarrow F$ is a CK-morphism. Less formally:  $E$ is a CK-subgraph in case for every regular vertex $v$ of $E$, all edges which $v$ emits in $F$ are included in $E$.

We shall let \textbf{CKGr} denote the category of directed graphs: the objects of \textbf{CKGr} are arbitrary directed graphs, and the morphisms are arbitrary CK-morphisms.

Fix a field $K$, and consider a CK-morphism $\phi: E\longrightarrow F$ between graphs $E$ and $F$. Following \cite[p. 173]{g:lpaadl}, $\phi$ induces a $K$-algebra homomorphism $L_K(\phi): L_K(E)\longrightarrow L_K(F)$ defined by $L_K(\phi)(v) = \phi^0(v)$ for all $v\in E^0$ and $L_K(\phi)(e) = \phi^1(e)$ for all $e\in E^1$. The assignments $E\longmapsto L_K(E)$ and $\phi \longmapsto L_K(\phi)$ define a functor $L_K$ from the category of graphs \textbf{CKGr} to the category of $K$-algebras $K$-\textbf{Alg}.


\begin{prop}\label{Lpaproperties}  
	
$(1)$ ({\cite[Lemma 2.5 (a)]{g:lpaadl}})  Arbitrary direct limits exist in the category \textbf{CKGr}. In particular, if $((E_i)_{i\in I}, (\phi_{ij})_{i\leq j\ {\rm in}\ I})$ is a directed system in \textbf{CKGr} and $\varinjlim_I E_i = E$, then 
$E^0$ and $E^1$ are the direct limits of the corresponding direct systems of sets $((E^0_i)_{i\in I}, (\phi^0_{ij})_{i\leq j\ {\rm in}\ I})$ and $((E^1_i)_{i\in I}, (\phi^1_{ij})_{i\leq j\ {\rm in}\ I})$.
	
$(2)$  ({\cite[Lemma 2.5 (b)]{g:lpaadl}}) For any field $K$, the functor $L_K: {\rm \textbf{CKGr}}\longrightarrow K{-\rm \textbf{Alg}}$	preserve direct limits.
	
$(3)$ ({\cite[Lemma 1.6.6]{AAS}}) Let $K$ be any field and 
$\phi: E\longrightarrow F$ a CK-morphism between graphs $E$ and $F$. Then the map $L_K(\phi): L_K(E)\longrightarrow L_K(F)$, defined by $L_K(\phi)(v) = \phi^0(v)$ for all $v\in E^0$ and $L_K(\phi)(e) = \phi^1(e)$ for all $e\in E^1$, is an injective $K$-algebra homomorphism. Consequently, if $E$ is a CK-subgraph of $F$, then $L_K(E)$ is a subalgebra of $L_K(F)$.   
\end{prop}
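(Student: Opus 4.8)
The plan is to treat the three parts in order, since (2) is built on the colimit constructed in (1), while (3) is essentially independent.

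For (1), given a directed system $((E_i)_{i\in I},(\phi_{ij})_{i\le j})$ in \textbf{CKGr}, I would first form the set-theoretic direct limits $V:=\varinjlim E_i^0$ and $A:=\varinjlim E_i^1$, with structure maps $\phi_i^0\colon E_i^0\to V$ and $\phi_i^1\colon E_i^1\to A$; because every $\phi_{ij}$ is a CK-morphism its two components are injective, so each $\phi_i^0,\phi_i^1$ is injective and $V,A$ are concretely the unions of the $E_i^0$, $E_i^1$ glued along the transition maps. Next I define $s,r\colon A\to V$ by choosing, for $a\in A$, an index $i$ and an edge $e\in E_i^1$ with $\phi_i^1(e)=a$ and setting $s(a)=\phi_i^0(s_{E_i}(e))$, $r(a)=\phi_i^0(r_{E_i}(e))$; the identities $s_{E_j}\phi_{ij}^1=\phi_{ij}^0 s_{E_i}$ and $r_{E_j}\phi_{ij}^1=\phi_{ij}^0 r_{E_i}$ make this well defined, giving a graph $E=(V,A,s,r)$ together with graph morphisms $\phi_i\colon E_i\to E$. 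I would then check that each $\phi_i$ is a CK-morphism: the components are injective by construction, and for a regular vertex $v\in E_i^0$ the map $\phi_i^1\colon s_{E_i}^{-1}(v)\to s_E^{-1}(\phi_i^0(v))$ is injective and also surjective, because any edge of $E$ emanating from $\phi_i^0(v)$ comes from some $E_j$ with $j\ge i$, where the CK-condition on $\phi_{ij}$ already forces $s_{E_j}^{-1}(\phi_{ij}^0(v))=\phi_{ij}^1(s_{E_i}^{-1}(v))$. Finally, for the universal property, given CK-morphisms $\psi_i\colon E_i\to F$ compatible with the system, the set-level universal property of $V$ and $A$ yields unique maps $V\to F^0$, $A\to F^1$ which assemble into a graph morphism $\psi\colon E\to F$; one checks $\psi$ is a CK-morphism, again because regularity of a vertex and the identity of its out-neighbourhood are witnessed at a finite stage. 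This gives $E=\varinjlim_I E_i$ with $E^0=\varinjlim E_i^0$ and $E^1=\varinjlim E_i^1$.

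For (2), write $E=\varinjlim_I E_i$ as in (1). The homomorphisms $L_K(\phi_i)\colon L_K(E_i)\to L_K(E)$ are compatible with the transition maps $L_K(\phi_{ij})$, so they induce a $K$-algebra homomorphism $\Theta\colon \varinjlim_I L_K(E_i)\to L_K(E)$. Surjectivity is immediate, since $L_K(E)$ is generated by $E^0\cup E^1\cup\{e^*:e\in E^1\}$ and, by (1), each vertex and each edge of $E$ lies in the image of some $\phi_i$. For injectivity I would construct an inverse using the presentation of $L_K(E)$ by generators and relations: send each $v\in E^0$ and each $e\in E^1$ to its image in $\varinjlim_I L_K(E_i)$ coming from any $E_i$ containing it (well defined by compatibility), and verify relations (1)--(4). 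Relations (1)--(3) involve only finitely many of the generators and so are checked at a single stage $E_i$. The only delicate point is relation (4): if $v\in E^0$ is regular, then by the description of $E$ in (1) the vertex $v$ together with its whole out-edge set $s_E^{-1}(v)$ already lies in some $E_i$, with $v$ regular there and $s_{E_i}^{-1}(v)=s_E^{-1}(v)$, so $v=\sum_{e\in s_E^{-1}(v)}ee^*$ holds in $L_K(E_i)$ and hence in the direct limit. This produces a homomorphism $L_K(E)\to\varinjlim_I L_K(E_i)$ inverse to $\Theta$, so $\Theta$ is an isomorphism.

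For (3), let $\phi\colon E\to F$ be a CK-morphism. First, $L_K(\phi)$ is well defined: relations (1)--(3) are preserved because $\phi^0,\phi^1$ are injective, and relation (4) is preserved because for a regular $v\in E^0$ the CK-condition makes $\phi^1$ a bijection $s_E^{-1}(v)\to s_F^{-1}(\phi^0(v))$, whence $\sum_{e\in s_E^{-1}(v)}\phi^1(e)\phi^1(e)^*=\sum_{f\in s_F^{-1}(\phi^0(v))}ff^*=\phi^0(v)$ in $L_K(F)$ (note $\phi^0(v)$ is regular in $F$, so relation (4) applies there). Since $L_K(\phi)$ sends vertices to vertices and edges to edges, it is $\mathbb{Z}$-graded, and it is nonzero on every vertex, as $L_K(\phi)(v)=\phi^0(v)\ne 0$ in $L_K(F)$. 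Injectivity of $L_K(\phi)$ then follows from the Graded Uniqueness Theorem for Leavitt path algebras of arbitrary graphs, which asserts that a $\mathbb{Z}$-graded $K$-algebra homomorphism out of $L_K(E)$ that is nonzero on each vertex is injective. The statement about CK-subgraphs is the special case in which $\phi$ is an inclusion.

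I expect the main obstacle to be the bookkeeping in (1): verifying that the canonical maps $\phi_i\colon E_i\to E$, and the comparison map $\psi\colon E\to F$ produced from the universal property, are genuine CK-morphisms. This is precisely where the asymmetry built into the definition of a CK-morphism must be used carefully, namely that edges may be adjoined at non-regular vertices but never at regular ones, so that regularity and out-neighbourhoods are ``stable'' along the system. Once (1) is in place, parts (2) and (3) are short: (2) is a standard generators-and-relations argument whose only subtle point (relation (4)) is already handled by the stability established in (1), and (3) is a direct application of the Graded Uniqueness Theorem after the routine well-definedness check.
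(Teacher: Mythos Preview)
Your argument is correct in all three parts. In the paper this proposition is stated without proof, with citations to \cite[Lemma 2.5]{g:lpaadl} for (1) and (2) and to \cite[Lemma 1.6.6]{AAS} for (3); you have supplied the proofs that the paper delegates to those references.

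A brief comparison with what those cited proofs do: your construction in (1) is exactly the standard one, and the delicate point you flag---that the canonical maps $\phi_i\colon E_i\to E$ and the comparison map $\psi\colon E\to F$ are genuine CK-morphisms---is handled essentially as you describe, by observing that along the system a regular vertex stays regular with a stabilized out-neighbourhood, and that any regular vertex of $E$ is already regular with the same out-set at some finite stage $E_j$ (you use this implicitly when verifying the CK-condition for $\psi$). Your argument for (2) via the universal presentation of $L_K(E)$ and a direct check of relation~(4) at a finite stage is the standard route. For (3), your appeal to the Graded Uniqueness Theorem after noting that $L_K(\phi)$ is graded and nonvanishing on vertices is precisely the argument in \cite{AAS}; the observation that $\phi^0(v)$ is regular in $F$ whenever $v$ is regular in $E$ (needed to apply relation~(4) in $L_K(F)$) is correct and worth making explicit, as you do.
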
 

A ring $R$ is said to have \textit{local units} if every finite subset of $R$ is contained in a subring of the form $eRe$ where $e=e^{2}\in R$. For example, the Leavitt path algebra $L_K(E)$ of an arbitrary graph $E$ with coefficients in $K$ is a $K$-algebra with local units (see, e.g. \cite[Lemma 1.6]{ap:tlpaoag05}). We call a left module $M$ over $R$ unitary if $RM=M,$ $i.e.,$  for each $m\in M$ there are $r_{1},\ldots$, $r_{n}\in R$ and $m_{1},\ldots,$ $m_{n}\in M$ such that $r_{1}m_{1}+\cdots+r_{n}m_{n}=m$. If $R$ is a ring with local units then this implies that for every finite subset
$M^{\prime}\subset M$ there is an idempotent $e\in R$ such that $em=m$ for all $m\in M^{\prime}$. By $R$-Mod we denote the category of unitary left $R$-modules together with the usual $R$-homomorphisms. Since $R$-Mod is a Grothendieck category, various standard homological notions are applicable. In particular, projective modules make sense in $R$-Mod for this {\color{red} is} a categorical notion. We refer the reader to \cite[Section 10]{ag:lpaosg} for an investigation of projective modules over non-unital rings.

For any ring $R$ with local units, $\mathcal{V}(R)$ denotes the set of isomorphism classes
(denoted by $[P]$) of finitely generated projective left $R$-modules.   
$\mathcal{V}(R)$ is an abelian monoid with operation
$$[P] + [Q] = [P\oplus Q]$$ for any isomorphism classes $[P]$ and $[Q]$.   On the other hand, for any directed graph
$E=(E^0, E^1, s, r)$ the \textit{graph monoid} $M_E$ 
is defined as follows. Denote by $T$ the free abelian monoid (written additively) with generators
\begin{center}
$E^0 \sqcup \{q_{_Z} \mid Z$ is a nonempty finite subset of $s^{-1}(v)$ and $v$ is an infinite emitter$\},$
\end{center}
and define relations on $T$ by setting
\begin{itemize}
\item[(1)] $v = \sum_{e\in s^{-1}(v)}r(e)$ for all regular vertex $v$;
\item[(2)] $v = \sum_{e\in Z}r(e) + q_{_Z}$ for all infinite emitters $v\in E^0$ and all nonempty finite subsets $Z\subset s^{-1}(v)$;
\item[(3)] $q_{_Z} = \sum_{e\in W\setminus Z}r(e) + q_{_W}$ for all nonempty finite sets $Z\subseteq W\subset s^{-1}(v)$, where $v\in E^0$ is an infinite emitter.
\end{itemize}
 Let $\sim_E$ be the congruence relation on $T$ generated
by these relations. Then $M_E$ is defined to be the quotient monoid $ T/_{\sim_E}$; we denote an element of $M_E$ by $[x]$,
where $x\in T$.  The foundational result about Leavitt path algebras for our work is the following:

\begin{thm}[{\cite[Theorem 4.3]{ag:lpaosg} and \cite[Theorem 4.9]{hlmrt:nsktflpa}}]\label{AG-HLMRTthm}
Let $E$ be an arbitrary graph and $K$ any field. Then there exists a monoid isomorphism $\gamma_E: M_E\longrightarrow \mathcal{V}(L_{K}(E))$ such that $\gamma_E([v])= [L_{K}(E)v]$ and 
$\gamma_E([q_Z]) = [L_{K}(E)(v - \sum_{e\in Z}ee^*)]$.
Specifically, the next three useful  consequences follow immediately.  

$(1)$ For any regular vertex $v\in E^0$, $L_K(E)v \cong   \bigoplus_{e\in s^{-1}(v)}L_K(E)r(e)$ as left $L_K(E)$-modules. 

$(2)$ For any infinite emitter $v\in E^0$ and finite sets $Z\subseteq W \subset s^{-1}(v)$, $$L_K(E)(v - \sum_{e\in Z}ee^*)\cong L_K(E)(v - \sum_{e\in W}ee^*) \bigoplus (\bigoplus_{f\in W\setminus Z}L_K(E)r(f))$$ as left $L_K(E)$-modules.

$(3)$ For any nonzero finitely generated projective left $L_K(E)$-module $Q$, there exist a nonempty finite subset $S\subseteq E^0\times 2^{E^1}$ and positive integers $\{n(v, T)\}_{(v, T)\in S}$ such that 
\begin{itemize}
\item[(i)] $Q\cong \bigoplus_{(v, T)\in S} n(v, T) L_K(E)(v- \sum_{e\in T}ee^*)$;
\item[(ii)] For all $(v, T)\in S$, $T$ is a finite subset of $s^{-1}(v)$; 
\item[(iii)] For all $(v, T)\in S$, $T$ is nonempty only if $v$ is an infinite emitter.	
\end{itemize} 
\end{thm}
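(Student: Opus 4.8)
The plan is to treat the existence of the monoid isomorphism $\gamma_E\colon M_E\to\mathcal{V}(L_K(E))$ as supplied by \cite[Theorem 4.3]{ag:lpaosg} together with \cite[Theorem 4.9]{hlmrt:nsktflpa} — the former carrying out the non-stable $K$-theory computation, the latter extending it to graphs of arbitrary (possibly uncountable) cardinality — and then to derive the three stated consequences purely formally. The only facts I need about the target are that equality of classes in $\mathcal{V}(R)$ \emph{is} isomorphism of the corresponding finitely generated projective modules, and that the monoid operation on $\mathcal{V}(R)$ is induced by $\oplus$; so every defining relation of the graph monoid $M_E$ is transported by $\gamma_E$ into an isomorphism of modules.

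For $(1)$, if $v\in E^0$ is regular then $s^{-1}(v)$ is finite and relation $(1)$ in the presentation of $M_E$ reads $[v]=\sum_{e\in s^{-1}(v)}[r(e)]$ in $M_E$; applying the monoid homomorphism $\gamma_E$ and using $\gamma_E([v])=[L_K(E)v]$ and $\gamma_E([r(e)])=[L_K(E)r(e)]$ gives $[L_K(E)v]=\big[\bigoplus_{e\in s^{-1}(v)}L_K(E)r(e)\big]$, i.e. the asserted isomorphism, with finiteness of $s^{-1}(v)$ keeping everything finitely generated. For $(2)$, when $Z=\emptyset$ one has $v-\sum_{e\in Z}ee^*=v$ and invokes relation $(2)$ of $M_E$, $[v]=\sum_{e\in W}[r(e)]+[q_W]$; when $Z\ne\emptyset$ one invokes relation $(3)$, $[q_Z]=\sum_{e\in W\setminus Z}[r(e)]+[q_W]$. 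In both cases applying $\gamma_E$ and translating through $\gamma_E([q_Z])=[L_K(E)(v-\sum_{e\in Z}ee^*)]$ (with the convention identifying the ``empty'' symbol with $[L_K(E)v]$) produces exactly the displayed decomposition; the cases $W=\emptyset$ or $W=Z$ are trivial.

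For $(3)$, let $Q$ be a nonzero finitely generated projective left $L_K(E)$-module, so $[Q]\in\mathcal{V}(L_K(E))$. Since $\gamma_E$ is surjective there is $x\in T$ with $\gamma_E([x])=[Q]$, and since $T$ is the \emph{free} abelian monoid on $E^0\sqcup\{q_Z\}$ we may write $x=\sum_{(v,T)\in S}n(v,T)\,g_{(v,T)}$, a finite sum with positive integer coefficients, where $g_{(v,\emptyset)}=v$, where $g_{(v,T)}=q_T$ for $T\ne\emptyset$, and where $S$ is the finite set of pairs actually occurring. By the very definition of the generators of $T$, for each $q_T$ appearing the set $T$ is a nonempty finite subset of $s^{-1}(v)$ with $v$ an infinite emitter — this is $(ii)$ and $(iii)$. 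Applying $\gamma_E$ and the translation of generators yields $[Q]=\big[\bigoplus_{(v,T)\in S}n(v,T)\,L_K(E)(v-\sum_{e\in T}ee^*)\big]$, which is $(i)$; and $S\ne\emptyset$ since $Q\ne0$ forces $[Q]\ne0$ in $\mathcal{V}(L_K(E))$, hence $[x]\ne0$ in $M_E$, so $x$ is not the empty sum.

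As for where the difficulty lies: there is essentially no obstacle once $\gamma_E$ is available — the entire argument is a mechanical transport of the three families of defining relations of $M_E$ through the isomorphism. The only points that demand a little care are the uniformizing convention that lets a vertex $v$ be read as the ``empty'' symbol so that $(2)$ and $(3)$ have a single clean form, the observation that regularity of $v$ (resp. finiteness of $T$, $Z$, $W$) is precisely what keeps all the direct sums finite and hence keeps us inside $\mathcal{V}(L_K(E))$, and confirming that the two cited theorems between them cover arbitrary graphs so that no cardinality hypothesis is needed.
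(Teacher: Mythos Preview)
Your proposal is correct and aligns with the paper's treatment: the paper does not prove this theorem at all, instead citing \cite[Theorem 4.3]{ag:lpaosg} and \cite[Theorem 4.9]{hlmrt:nsktflpa} for the isomorphism $\gamma_E$ and declaring that the three consequences ``follow immediately.'' You have simply written out those immediate deductions --- transporting the defining relations of $M_E$ through $\gamma_E$ for $(1)$ and $(2)$, and lifting $[Q]$ along the surjection for $(3)$ --- exactly as the paper intends.
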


We emphasize that the direct sums indicated in the above theorem  are external direct sums.  Also, throughout the paper, for a positive integer $n$ and a left $R$-module $M$, the direct sum of $n$ copies of $M$ is denoted $nM$.








We finish the introductory section by presenting some results about endomorphism rings of modules over   rings with local units
which will be of great importance in this analysis.  
These can be found in \cite[Chapters 1, 2]{AF}; the proofs given there for unital rings go through verbatim in the more general setting of rings with local units.    For a left $R$-module ${}_RM$, we write $R$-endomorphisms of $M$ on the right (i.e., the side opposite the scalars); so for $f,g \in {\rm End}_R(M)$,  $(m)fg$ means ``first $f$, then $g$".  

\begin{prop}\label{End}   Let $R$ be a ring with local units.

$(1)$  Let $e,f$  be idempotents in $R$.  Then ${\rm Hom}_R(Re,Rf) \cong eRf$ as abelian groups. 
 
$(2)$ Suppose $P \cong  \bigoplus_{i=1}^n P_i$ as left $R$-modules.  Then 
$${\rm End}_R(P) \cong \biggl( {\rm Hom}_R(P_i, P_j)\biggr),$$
the ring of $n\times n$ matrices for which the entry in the $i$-th row, $j$-th column is an element of ${\rm Hom}_R(P_i, P_j)$, for all $1\leq i,j \leq n$.    In particular, if $P\cong  \bigoplus_{i=1}^n Re_i$, an external direct sum of the left $R$-modules $Re_i$ for idempotents $e_i$, then
$${\rm End}_R(P) \cong \biggl(e_iRe_j\biggr),$$
the ring of $n\times n$ matrices for which the entry in the $i$-th row, $j$-th column is an element of $e_iRe_j$, for all $1\leq i,j \leq n$. 
\end{prop}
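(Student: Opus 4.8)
The plan is to prove part (1) by writing down the canonical candidate isomorphism, and then to deduce part (2) from the usual ``matrix description'' of the endomorphism ring of a finite direct sum, being careful that the right-hand composition convention fixed above is precisely what turns ${\rm End}_R(P)$ into an honest (rather than opposite) matrix ring.

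\emph{Part (1).} First I would record that, since $R$ has local units, $Re$ and $Rf$ are unitary left $R$-modules: given $x = re \in Re$, choose an idempotent $u \in R$ with $ur = r$, so that $x = u(re) \in R(Re)$, whence $R(Re) = Re$; thus ${\rm Hom}_R(Re,Rf)$ means exactly the group of $R$-linear maps $Re \to Rf$. Now define $\Phi \colon eRf \longrightarrow {\rm Hom}_R(Re,Rf)$ by $\Phi(a) = \rho_a$, where $\rho_a$ is the map $x \longmapsto xa$ (written, consistently with the convention, as $(x)\rho_a = xa$). Since $a = eaf$ and $xe = x$ for $x \in Re$, one gets $xa = xaf \in Rf$, so $\rho_a$ is well defined, and $(rx)\rho_a = rxa = r\bigl((x)\rho_a\bigr)$ shows it is left $R$-linear. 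The map $\Phi$ is clearly additive; it is injective because $(e)\rho_a = ea = a$ (using $e \in Re$ and $ea = a$, valid as $a \in eRf$), and it is surjective because, given $\psi \in {\rm Hom}_R(Re,Rf)$, the element $a := (e)\psi$ lies in $eRf$ (from $(e)\psi = (ee)\psi = e\bigl((e)\psi\bigr)$ together with $(e)\psi \in Rf$) and satisfies $(x)\psi = (xe)\psi = x\bigl((e)\psi\bigr) = xa = (x)\rho_a$ for all $x = xe \in Re$. Hence $\Phi$ is an isomorphism of abelian groups.

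\emph{Part (2).} Let $\iota_i \colon P_i \to P$ and $\pi_i \colon P \to P_i$ be the structure inclusions and projections of $P \cong \bigoplus_{i=1}^n P_i$; with the right-hand convention they satisfy $\iota_i\pi_i = {\rm id}_{P_i}$, $\iota_i\pi_j = 0$ for $i \neq j$, and $\sum_{i=1}^n \pi_i\iota_i = {\rm id}_P$. For $f \in {\rm End}_R(P)$ set $f_{ij} := \iota_i f \pi_j \in {\rm Hom}_R(P_i,P_j)$. The assignment $f \longmapsto (f_{ij})$ is additive, has inverse $(g_{ij}) \longmapsto \sum_{i,j}\pi_i g_{ij}\iota_j$, and is multiplicative: inserting ${\rm id}_P = \sum_j \pi_j\iota_j$ between $f$ and $g$ yields $(fg)_{ik} = \iota_i f g \pi_k = \sum_j (\iota_i f \pi_j)(\iota_j g \pi_k) = \sum_j f_{ij}g_{jk}$, which is exactly the $(i,k)$-entry of the product of the matrices $(f_{ij})$ and $(g_{jk})$. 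This proves ${\rm End}_R(P) \cong \bigl({\rm Hom}_R(P_i,P_j)\bigr)$ as rings. For the final assertion, take $P_i = Re_i$ and apply part (1) entrywise via the maps $\Phi$: for $a \in e_iRe_j$ and $b \in e_jRe_k$ one has $\rho_a\rho_b = \rho_{ab}$ (since $(x)\rho_a\rho_b = xab$) with $ab \in e_iRe_k$, so the entrywise isomorphisms $e_iRe_j \cong {\rm Hom}_R(Re_i,Re_j)$ are compatible with the multiplications and assemble into a ring isomorphism $\bigl({\rm Hom}_R(Re_i,Re_j)\bigr) \cong \bigl(e_iRe_j\bigr)$.

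I do not expect a genuine obstacle here: the content is entirely bookkeeping, and, as remarked in the text, the unital arguments of \cite{AF} carry over once one knows all modules in sight are unitary. The one point that truly requires attention is the composition convention: writing $R$-endomorphisms on the side opposite the scalars is what makes the left-multiplication maps $\rho_a$ additive in $a$ with $\rho_a\rho_b = \rho_{ab}$, and hence what makes the matrix identity $(fg)_{ik} = \sum_j f_{ij}g_{jk}$ come out with the factors in the correct order rather than producing the opposite ring.
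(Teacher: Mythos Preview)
Your proof is correct and is precisely the standard argument the paper has in mind: the paper itself gives no proof beyond pointing to \cite[Chapters 1, 2]{AF} and remarking that the unital case goes through verbatim for rings with local units, which is exactly what you have written out. Your care with the right-hand composition convention (so that $\rho_a\rho_b = \rho_{ab}$ and $(fg)_{ik} = \sum_j f_{ij}g_{jk}$ come out in the correct order) is the only genuinely delicate point, and you have handled it correctly.
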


%
%
%
%

\medskip

\section{Corners of Steinberg algebras}
We begin this section by reminding the reader the basics of Steinberg algebras.   We then establish that for idempotents of a specified type, a corner algebra of a Steinberg algebra generated by such an idempotent is isomorphic to a Steinberg algebra
(Proposition \ref{cornerStein}).  
(This allows us to re-establish Webster's result that the path space of an arbitrary graph is locally compact Hausdorff (Theorem \ref{Websterthm}).)   Consequently, we use Proposition \ref{cornerStein} to establish that, for idempotents of a specific form, the corner of a Leavitt path algebra of an arbitrary graph generated by such an idempotent is isomorphic to a Steinberg algebra (Theorem \ref{gecorthem} and Corollary \ref{limofcor}).

We begin by recalling the concepts of ample groupoids and Steinberg algebras. A \textit{groupoid} is a small category in which every morphism is invertible. It can also be viewed as a generalization of a group which has a partial binary operation. 
Let $\mathcal{G}$ be a groupoid. If $x \in \mathcal{G}$, $s(x) = x^{-1}x$ is the source of $x$ and $r(x) = xx^{-1}$ is its range. The pair $(x,y)$ is is composable if and only if $r(y) = s(x)$. The set $\mathcal{G}^{(0)}:= s(\mathcal{G}) = r(\mathcal{G})$ is called the \textit{unit space} of $\mathcal{G}$. Elements of $\mathcal{G}^{(0)}$ are units in the sense that $xs(x) = x$ and
$r(x)x = x$ for all $x\in \mathcal{G}$. For $U, V \subseteq \mathcal{G}$, we define
\begin{center}
$UV = \{\alpha\beta \mid \alpha\in U, \beta\in V \text{\, and \,} r(\beta) = s(\alpha)\}$ and $U^{-1} = \{\alpha^{-1}\mid \alpha\in U\}$.
\end{center}

A \textit{topological groupoid} is a groupoid endowed with a topology under which the inverse map is continuous, and such that the composition is continuous with respect to the relative topology on $\mathcal{G}^{(2)} := \{(\beta, \gamma)\in \mathcal{G}^2\mid s(\beta) = r(\gamma)\}$ inherited from $\mathcal{G}^2$. An \textit{\'{e}tale groupoid} is a topological groupoid $\mathcal{G},$ whose unit space  $\mathcal{G}^{(0)}$ is locally compact Hausdorff, and such that 
 the domain map $s$ is a local homeomorphism. In this case, the range map $r$ and the multiplication map are local homeomorphisms and  $\mathcal{G}^{(0)}$ is open in $\mathcal{G}$ \cite{r:egatq}. 

An \textit{open bisection} of $\mathcal{G}$ is an open subset $U\subseteq \mathcal{G}$ such that $s|_U$ and $r|_U$ are homeomorphisms onto an open subset of $\mathcal{G}^{(0)}$.  Similar to \cite[Proposition 2.2.4]{p:gisatoa} we have that $UV$ and $U^{-1}$ are compact open bisections for all compact open bisections $U$ and $V$ of an  \'{e}tale groupoid  $\mathcal{G}$.
An \'{e}tale groupoid $\mathcal{G}$ is called \textit{ample} if  $\mathcal{G}$ has a base of compact open bisections for its topology.

Steinberg algebras were introduced in \cite{stein:agatdisa} in the context of discrete inverse semigroup algebras and independently in \cite{cfst:aggolpa} as a model for Leavitt path algebras. Let $\mathcal{G}$ be an ample groupoid, and $K$ a field with the discrete topology. We denote by $K^{\mathcal{G}}$ the set of all continuous functions from $\mathcal{G}$ to $K$. Canonically,  $K^{\mathcal{G}}$ has the structure of a $K$-vector space with
operations defined pointwise.

\begin{defn}[{The Steinberg algebra}] 
Let $\mathcal{G}$ be an ample groupoid, and $K$ any field.   Let $A_K(\mathcal{G})$ be the $K$-vector subspace of $K^{\mathcal{G}}$ generated by the set
\begin{center}
$\{1_U\mid U$ is a compact open bisection of $\mathcal{G}\}$,
\end{center}
where $1_U: \mathcal{G}\longrightarrow K$ denotes the characteristic function on $U$. The \textit{multiplication} of $f, g\in A_K(\mathcal{G})$ is given by the convolution
\[(f\ast g)(\gamma)= \sum_{\gamma = \alpha\beta}f(\alpha)g(\beta)\] for all $\gamma\in \mathcal{G}$. The $K$-vector subspace $A_K(\mathcal{G})$, with convolution, is called the \textit{Steinberg algebra} of $\mathcal{G}$ over $K$.
\end{defn}

It is useful to note that \[1_U\ast 1_V = 1_{UV}\] for compact open bisections $U$ and $V$ (see \cite[Proposition 4.5]{stein:agatdisa}).

If $F$ is an orthogonal set of nonzero idempotents in a ring $R$, then the set $\sum_{f,f' \in F}fRf'$  is a  (not-necessarily-unital) subring of $R$, which we call a ``corner-like" subring.   In case $F$ is finite, then $e = \sum_{f\in F}f$ is an idempotent in $R$, and  $\Sigma_{f,f' \in F}fRf' = eRe$, the usual corner of $R$  generated by $e$.  

\begin{defn}\label{GsubUdef}
Let $\mathcal{G}$ be an ample groupoid, and let $U$ be an open subset of $\mathcal{G}^{(0)}$.  We define   $\mathcal{G}_U \subseteq \mathcal{G}$ by setting 
$$\mathcal{G}_U:= r^{-1}(U) \cap s^{-1}(U).$$
\end{defn}

\begin{prop}\label{cornerStein}
Let $K$ be a field and $\mathcal{G}$ an ample groupoid.  

$(1)$ If $U$ is an open subset of $\mathcal{G}^{(0)}$, then $\mathcal{G}_U$
 is an open ample subgroupoid of $\mathcal{G}$.

$(2)$ If $\{U_i\mid i\in I\}$ is a set of compact open subsets of $\mathcal{G}^{(0)}$ with $U_i\cap U_j = \emptyset$ for all $i\neq j$ and $U = \bigcup_{i\in I}U_i$, then $A_K(\mathcal{G}_U)$ can be realized as a corner-like subring of $A_K(\mathcal{G})$, specifically, $$A_K(\mathcal{G}_U) = \sum_{i, j\in I}1_{U_i}A_K(\mathcal{G})1_{U_j} .$$ 

\end{prop}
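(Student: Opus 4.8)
The plan is to prove part (1) first, then deduce part (2) by analyzing the characteristic functions of compact open bisections of $\mathcal{G}_U$ and matching them against the corner-like subring $\sum_{i,j\in I}1_{U_i}A_K(\mathcal{G})1_{U_j}$.

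For part (1): I would first check that $\mathcal{G}_U = r^{-1}(U)\cap s^{-1}(U)$ is a subgroupoid. Closure under inversion is immediate since $s$ and $r$ swap under $x\mapsto x^{-1}$; closure under composition follows because if $x,y\in\mathcal{G}_U$ are composable then $r(xy)=r(x)\in U$ and $s(xy)=s(y)\in U$, and its unit space is exactly $U$ since for $u\in U$ we have $r(u)=s(u)=u\in U$. Since $s$ and $r$ are continuous, $\mathcal{G}_U$ is open in $\mathcal{G}$; it inherits the subspace topology. It is étale because the restriction of a local homeomorphism to an open set is a local homeomorphism onto its (open) image, and $\mathcal{G}_U^{(0)}=U$ is locally compact Hausdorff, being an open subset of the locally compact Hausdorff space $\mathcal{G}^{(0)}$. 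For ampleness I would take a compact open bisection $B$ of $\mathcal{G}$ and observe $B\cap\mathcal{G}_U$ is open in $\mathcal{G}_U$, and that these form a base: given $x\in\mathcal{G}_U$ and an open neighborhood, shrink a compact open bisection $B$ of $\mathcal{G}$ around $x$ so that $s(B)$ and $r(B)$ lie inside $U$ (using that $U$ is open and $s,r$ continuous), whence $B\subseteq\mathcal{G}_U$ and $B$ is still a compact open bisection, now of $\mathcal{G}_U$.

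For part (2): the ingredient is that under the hypotheses each $U_i$ is compact open in $\mathcal{G}^{(0)}$ (hence a compact open bisection contained in the unit space), so $1_{U_i}\in A_K(\mathcal{G})$; moreover the $U_i$ are pairwise disjoint, so the $1_{U_i}$ are orthogonal idempotents. I would show first that every compact open bisection $B$ of $\mathcal{G}_U$ is a compact open bisection of $\mathcal{G}$ with $s(B), r(B)\subseteq U$, so $1_B = 1_{r^{-1}(U)}\ast 1_B\ast 1_{s^{-1}(U)}$ in $A_K(\mathcal{G})$; then decompose $B = \bigsqcup_{i,j} B\cap r^{-1}(U_i)\cap s^{-1}(U_j)$ as a finite disjoint union (finite because $s(B)$ and $r(B)$ are compact and the $U_i$ form a disjoint open cover, so only finitely many meet them), giving $1_B = \sum_{i,j} 1_{U_i}\ast 1_B\ast 1_{U_j}$, which lies in $\sum_{i,j}1_{U_i}A_K(\mathcal{G})1_{U_j}$. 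This shows $A_K(\mathcal{G}_U)\subseteq \sum_{i,j}1_{U_i}A_K(\mathcal{G})1_{U_j}$. For the reverse inclusion, take a compact open bisection $C$ of $\mathcal{G}$ and indices $i,j$; then $1_{U_i}\ast 1_C\ast 1_{U_j} = 1_{U_i C U_j}$ by the product formula for characteristic functions, and $U_i C U_j = r^{-1}(U_i)\cap C\cap s^{-1}(U_j)$ is a compact open bisection contained in $\mathcal{G}_U$, so its characteristic function lies in $A_K(\mathcal{G}_U)$; extending linearly gives the containment. The two inclusions give the claimed equality, and it is a corner-like subring by the remark preceding Definition \ref{GsubUdef} applied to the orthogonal family $\{1_{U_i}\}$.

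The main obstacle I anticipate is the careful bookkeeping in part (2) showing that restriction of a compact open bisection of $\mathcal{G}_U$ really does decompose as a \emph{finite} sum over the index set $I$ (which may be infinite): this rests on compactness of $s(B)$ and $r(B)$ together with the disjointness of the $U_i$, and on the subtler point that a compact open bisection of $\mathcal{G}_U$ is automatically compact in $\mathcal{G}$ — which is fine since $\mathcal{G}_U$ carries the subspace topology and compactness is absolute. A secondary technical care is verifying $1_{U_i}\ast f\ast 1_{U_j}$ computes as the expected restriction of $f$; this follows from $1_{U_i}\ast 1_C = 1_{U_i C}$ and the fact that left/right multiplication by a unit-space characteristic function simply restricts the range/source, which I would record as a short lemma or inline computation.
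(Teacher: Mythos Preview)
Your proposal is correct and follows essentially the same route as the paper: the subgroupoid and ampleness arguments for (1) are the same, and for (2) both you and the paper establish the two inclusions by observing $1_{U_i}\ast 1_C\ast 1_{U_j}=1_{U_iCU_j}$ on one side and, on the other, using compactness of $s(B)\cup r(B)$ (the paper writes this as $V^{-1}V\cup VV^{-1}$) to reduce to finitely many $U_i$'s. Your anticipated obstacles are exactly the points the paper addresses explicitly.
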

\begin{proof}
(1) We have that $\mathcal{G}_U$ is clearly a subset of $\mathcal{G}$ closed under the inversion and the composition, so $\mathcal{G}_U$ is a subgroupoid of $\mathcal{G}$ having $\mathcal{G}_U^{(0)} = U$. Since $U$ is open in $\mathcal{G}^{(0)}$, and the two maps $r$ and $s: \mathcal{G} \longrightarrow \mathcal{G}^{(0)}$ are continuous, $\mathcal{G}_U$ is open in $\mathcal{G}$. This implies that 
$\mathcal{G}_U$ is a topological subgroupoid of $\mathcal{G}$ with the subspace  topology.

We note that if $B$ is a compact open bisection of $\mathcal{G}_U$ then $B$ is also a compact open bisection of $\mathcal{G}$. Indeed, since $B$ is open in $\mathcal{G}_U$ and $\mathcal{G}_U$ is open in $\mathcal{G}$, $B$ is open in $\mathcal{G}$. Assume that $\{C_j\}_{j\in J}$ is an open cover of $B$ in $\mathcal{G}$. We then have that $\{C_j\cap \mathcal{G}_U\}_{j\in J}$ is an open cover of $B$ in $\mathcal{G}_U$. Since $B$ is a compact subset of $\mathcal{G}_U$, $B$ has some finite subcover $\{C_{j_k}\cap \mathcal{G}_U\}_{k=1}^n$, so $\{C_{j_k}\}_{k=1}^n$ is a finite subcover of $B$ in $\mathcal{G}$, and hence $B$ is a compact subset of $\mathcal{G}$. Since $B$ is an open bisection of $\mathcal{G}_U$, $s|_B$ and $r|_B$ are homeomorphims onto open subsets of $\mathcal{G}_U^{(0)}$, so $s|_B$ and $r|_B$ are homeomorphims onto open subsets of $\mathcal{G}^{(0)}$, since  $\mathcal{G}_U^{(0)}=U$ is open in $\mathcal{G}^{(0)},$  thus showing that $B$ is a compact open bisection of $\mathcal{G}$.

Let $\mathcal{B}$ be a base of compact open bisections for the topology on $\mathcal{G}$. We then have that $\mathcal{B}_U:=\{B\in \mathcal{B}\mid B\subseteq \mathcal{G}_U\}$ is a base of compact open bisections for the topology on $\mathcal{G}_U$. Indeed, every element $B\in \mathcal{B}_U$ is clearly a compact open bisection of $\mathcal{G}_U$. Let $V$ be any open subset of $\mathcal{G}_U$. Since $\mathcal{G}_U$ is open in $\mathcal{G}$, $V$ is open in $\mathcal{G}$. Then, since $\mathcal{B}$ is a base of compact open bisections for the topology on $\mathcal{G}$, there exists a subset $\{B_j\mid j\in J\} \subseteq \mathcal{B}$ such that $V = \cup_{j\in J}B_j$.
This implies that $B_j\subseteq \mathcal{G}_U$ for all $j\in J$, that means, $B_j\in \mathcal{B}_U$ for all $j\in J$, so $\mathcal{B}_U$ is a base of compact open bisections for the topology on $\mathcal{G}_U$. Therefore, $\mathcal{G}_U$ is an open ample subgroupoid of $\mathcal{G}$.

(2) We  have that 
\begin{center}
	$A_K(\mathcal{G})= \text{Span}_K\{1_B\mid B$ is a compact open bisection of $\mathcal{G}\}$
\end{center}
and
\begin{center}
	$A_K(\mathcal{G}_U)= \text{Span}_K\{1_{B'}\mid B'$ is a compact open bisection of $\mathcal{G}_U\}$. 
\end{center} 
Then, for any $i, j\in I$ we have
\begin{center}
$1_{U_i}A_K(\mathcal{G}) 1_{U_j}= \text{Span}_K\{1_{U_iBU_j}\mid B$ is a compact open bisection of $\mathcal{G}\}$
\end{center}
and each $U_iBU_j$ is a compact open bisection of $\mathcal{G}$ which is contained in $\mathcal{G}_U$, so $1_{U_i}A_K(\mathcal{G}) 1_{U_j} \subseteq A_K(\mathcal{G}_U)$, and hence $\sum_{i, j\in I} 1_{U_i}A_K(\mathcal{G}) 1_{U_j}\subseteq A_K(\mathcal{G}_U)$.

Let $V$ be a compact open bisection of $\mathcal{G}_U$. By the above note, $V$ is a compact open bisection of $\mathcal{G}$. Put $W = V^{-1}V\cup VV^{-1}$. We then have that $W$ is a compact open subset of $U$, so $W \subseteq \bigsqcup_{k=1}^n U_{i_k} =: B$ for some $i_1, i_2, \hdots, i_n\in I$. We then have $BVB = V$ and $1_V =1_B1_V1_B = (\sum_{k=1}^n1_{U_{i_k}}) 1_V(\sum_{k=1}^n1_{U_{i_k}}) = \sum_{1\leq k, j\leq n}1_{U_{i_k}}1_V1_{U_{i_j}}\in \sum_{i, j\in I} 1_{U_i}A_K(\mathcal{G}) 1_{U_j}$, so $A_K(\mathcal{G}_U)\subseteq \sum_{i, j\in I} 1_{U_i}A_K(\mathcal{G}) 1_{U_j}$, thus proving $A_K(\mathcal{G}_U) =\sum_{i, j\in I} 1_{U_i}A_K(\mathcal{G}) 1_{U_j}$, finishing the proof.
\end{proof}

\begin{defn}\label{Calphadef}
Let $E= (E^0, E^1, r, s)$ be a graph. For $\alpha \in E^*$ and a finite subset $G\subseteq s^{-1}(r(\alpha))$, we define
\begin{center}
$C(\alpha) = \{\alpha x \mid x\in E^*\cup E^{\infty},\, r(\alpha) = s(x)\}$ \ \ 
and  \ \ $C(\alpha, G) = C(\alpha)\setminus \bigcup_{e\in G}C(\alpha e).$
\end{center}
\end{defn}

\begin{lem}
Let $E$ be a graph and $\alpha, \beta\in E^{*}$ with $|\alpha|\geq |\beta|$. Then $C(\alpha) \cap C(\beta) \neq \emptyset$	if and only if $\alpha = \beta\alpha'$ for some $\alpha'\in E^{*}$.
\end{lem}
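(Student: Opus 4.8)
The plan is to first rephrase everything as a statement about prefixes. For $\gamma\in E^*$ and $z\in E^*\cup E^\infty$, membership $z\in C(\gamma)$ means exactly that $\gamma$ is an initial segment of $z$, where a vertex $v$ (viewed as the length-$0$ path) is an initial segment of $z$ precisely when $s(z)=v$. Once phrased this way, the lemma becomes an elementary fact about comparing two prefixes of a common path, and the whole argument is short; the only thing to watch is the consistent treatment of length-$0$ paths.

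For the ``if'' direction I would simply check that $\alpha$ itself lies in $C(\alpha)\cap C(\beta)$. Indeed $\alpha=\alpha\,r(\alpha)\in C(\alpha)$, and if $\alpha=\beta\alpha'$ with $\alpha'\in E^*$ then $s(\alpha')=r(\beta)$, so $\alpha=\beta\alpha'\in C(\beta)$; hence the intersection is nonempty. (When $|\alpha'|=0$ this merely says $\alpha=\beta$, and the conclusion still holds.)

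For the ``only if'' direction, I would pick $\gamma\in C(\alpha)\cap C(\beta)$ and write $\gamma=\alpha x=\beta y$ with $x,y\in E^*\cup E^\infty$, noting that $|\gamma|\ge|\alpha|\ge|\beta|$ since $\alpha$ is a prefix of $\gamma$. If $|\beta|=0$ (which subsumes the case $|\alpha|=0$), then $\beta=s(\gamma)=s(\alpha)$, so $\alpha=\beta\alpha$ and $\alpha'=\alpha$ works. If $|\beta|\ge1$, hence $|\alpha|\ge1$, write $\alpha=f_1\cdots f_m$ and $\beta=g_1\cdots g_n$ with $f_i,g_j\in E^1$ and $n=|\beta|\le m=|\alpha|$. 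From $\gamma=\alpha x$ the $i$-th edge of $\gamma$ is $f_i$ for $1\le i\le m$, and from $\gamma=\beta y$ it is $g_i$ for $1\le i\le n$; comparing the first $n$ edges gives $g_i=f_i$ for all such $i$, so $\beta=f_1\cdots f_n$ and $\alpha=\beta\,(f_{n+1}\cdots f_m)$. Taking $\alpha'=f_{n+1}\cdots f_m\in E^*$ --- read as the vertex $r(\beta)=r(\alpha)$ when $n=m$ --- gives $\alpha=\beta\alpha'$, as desired.

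I expect no genuine obstacle here: the mathematical content is entirely the prefix comparison above, and the only care required is the bookkeeping for the degenerate (length-$0$) cases, both for the factor $\alpha'$ and for when $\alpha$ or $\beta$ is a vertex.
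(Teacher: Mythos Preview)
Your proof is correct and follows essentially the same approach as the paper. The paper's own argument is extremely terse---it simply asserts that from $x=\alpha x'=\beta x''$ one obtains $\alpha=\beta\alpha'$ and calls the converse ``obvious''---so your write-up is really a more careful elaboration of the same prefix-comparison idea, including explicit handling of the length-$0$ cases that the paper leaves implicit.
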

\begin{proof}
Assume that $C(\alpha) \cap C(\beta) \neq \emptyset$. Then, there exists an element $x\in E^*\cup E^{\infty}$ such that $x = \alpha x'$ and $x = \beta x''$ for some $x', x''\in E^*\cup E^{\infty}$. This implies that $\alpha = \beta\alpha'$ for some $\alpha'\in E^{*}$. The converse is obvious, finishing the proof.
\end{proof}

We define the map $\varphi: E^*\cup E^{\infty} \longrightarrow \{0, 1\}^{E^{*}}$ by $\varphi(p)(\alpha) = 1$ if $p\in C(\alpha)$, and $0$ otherwise. It is not hard to see that $\varphi$ is injective. Assume that $\{0, 1\}$ has the discrete topology. We endow $\{0, 1\}^{E^{*}}$ with the topology of pointwise convergence, and $E^*\cup E^{\infty}$ with the initial topology induced by $\{\varphi\}$. Then, the topological space $\{0, 1\}^{E^{*}}$ is compact by Tychonoff's Theorem, and Hausdorff because products preserve the Hausdorff property. Moreover, since $\varphi$ is injective, $\varphi$ is a homeomorphism onto its range.

In \cite[Theorem 2.1]{w:tpsoadg} Webster proved that for any countable graph $E$, $E^*\cup E^{\infty}$  is a locally compact Hausdorff space with the basis of compact open sets
$C(\alpha, G)$, where $\alpha\in E^*$ and $G$ is a finite subset of $s^{-1}(r(\alpha))$. In the following theorem we reproduce a short proof of Webster's result, generalized to the case where  $E$ is an arbitrary graph.

\begin{thm}[{cf. \cite[Theorem 2.1]{w:tpsoadg}}] \label{Websterthm}
For an arbitrary graph $E$, the collection
\begin{center}
$\{C(\alpha, G)\mid \alpha \in E^{*}, \, G \subseteq s^{-1}(r(\alpha))$ is finite$\}$
\end{center}	
is a basis for the locally compact Hausdorff topology on $E^*\cup E^{\infty}$.
\end{thm}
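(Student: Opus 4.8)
The plan is to work through the embedding $\varphi\colon X\hookrightarrow\{0,1\}^{E^*}$, where $X:=E^*\cup E^{\infty}$ is given the initial topology, so that $\varphi$ is a homeomorphism of $X$ onto a subspace of the compact Hausdorff space $\{0,1\}^{E^*}$; in particular $X$ is automatically Hausdorff. Unwinding the definitions, $\varphi(p)(\beta)=1$ precisely when $\beta$ is a prefix of $p$, that is, when $p\in C(\beta)$. Hence $C(\beta)=\varphi^{-1}\bigl(\{g\in\{0,1\}^{E^*}\mid g(\beta)=1\}\bigr)$ is the preimage of a clopen set under the continuous map $\varphi$, so $C(\beta)$ is clopen in $X$; and $C(\alpha,G)=C(\alpha)\setminus\bigcup_{e\in G}C(\alpha e)$, being a finite Boolean combination of such sets, is clopen too, in particular open. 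It then remains to show that (i) each $C(\alpha,G)$ is compact and (ii) the sets $C(\alpha,G)$ form a basis for $X$. Granting these, $X$ is Hausdorff with a basis of compact open sets, hence locally compact, and the theorem follows (note that every $p\in X$ lies in the compact open set $C(s(p))=C(s(p),\emptyset)$).

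For (i), I would first prove that $\varphi(C(\alpha))$ is \emph{closed} in $\{0,1\}^{E^*}$. Let $f$ lie in its closure and put $P_f:=\{\beta\in E^*\mid f(\beta)=1\}$. Testing $f$ against clopen neighbourhoods of the form $\{g\mid g(\beta_1)=f(\beta_1),\dots,g(\beta_k)=f(\beta_k)\}$, each of which must meet $\varphi(C(\alpha))$, one checks readily that $\alpha\in P_f$ (so $P_f\neq\emptyset$), that $P_f$ is closed under passing to prefixes, and that $P_f$ is totally ordered by the prefix relation. But a prefix-closed chain of finite paths is exactly the set of prefixes either of a finite path (when it has a longest element) or of a unique infinite path (otherwise); in either case that path lies in $C(\alpha)$ because $\alpha\in P_f$, so $f\in\varphi(C(\alpha))$. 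Being closed in the compact space $\{0,1\}^{E^*}$, $\varphi(C(\alpha))$ is compact, hence $C(\alpha)$ is compact; and $C(\alpha,G)$, which is closed in $C(\alpha)$ since each $C(\alpha e)$ is open, is then compact as well.

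For (ii), the subbasic open subsets of $X$ are the sets $C(\beta)$ and $X\setminus C(\beta)$ for $\beta\in E^*$, so it suffices to show that every set of the form $V=\bigcap_{i=1}^{m}C(\alpha_i)\cap\bigcap_{j=1}^{n}\bigl(X\setminus C(\beta_j)\bigr)$ is a union of sets $C(\alpha,G)$. Fix $p\in V$, so each $\alpha_i$ is a prefix of $p$ and no $\beta_j$ is, and set $\ell:=\max_j|\beta_j|$ (with $\ell=0$ if $n=0$). If $p$ is infinite, or $p$ is finite with $|p|\geq\ell$, take $\alpha$ to be the prefix of $p$ of length $\max(\ell,|\alpha_1|,\dots,|\alpha_m|)$: then $C(\alpha)\subseteq C(\alpha_i)$ for each $i$, while $|\alpha|\geq|\beta_j|$ and $\beta_j$ is not a prefix of $\alpha$ (else it would be a prefix of $p$), so by the preceding lemma $C(\alpha)\cap C(\beta_j)=\emptyset$; hence $p\in C(\alpha)=C(\alpha,\emptyset)\subseteq V$. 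If $p$ is finite with $|p|<\ell$, take $\alpha:=p$ and let $G$ be the (finite) set of edges $e\in s^{-1}(r(p))$ for which some $\beta_j$ has the form $\beta_j=p\,e\,\mu_j$; for the remaining indices $j$ the preceding lemma already gives $C(p)\cap C(\beta_j)=\emptyset$, whereas for these indices $C(\beta_j)\subseteq C(p\,e)$ with $e\in G$ and $p\notin C(p\,e)$ for length reasons, so $p\in C(p,G)\subseteq V$. In every case $p$ lies in a set of the required form contained in $V$, which finishes the verification.

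The step I expect to be the main obstacle is (i). Here one must notice that $\varphi(X)$ itself need not be closed in $\{0,1\}^{E^*}$ (the zero function can lie in its closure, for instance when $E$ has infinitely many vertices), so the argument has to use the ``anchor'' $\alpha$ to keep the set $P_f$ nonempty, and then reconstruct a finite or infinite path from the prefix-closed chain $P_f$. Step (ii) is routine but needs care in the bookkeeping around path lengths and the first edge along which some $\beta_j$ departs from $p$.
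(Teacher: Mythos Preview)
Your proof is correct and follows essentially the same strategy as the paper: embed $X$ into $\{0,1\}^{E^*}$, deduce Hausdorffness from that, prove each $C(\alpha)$ is compact by showing its image is closed in the compact product, and then verify the basis claim by a case analysis. The differences are minor streamlinings. For compactness, the paper first proves $\varphi(C(v))$ closed via a net argument and then inducts on $|\alpha|$ to reach $C(\alpha)$, whereas you handle $C(\alpha)$ for arbitrary $\alpha$ in one pass using the closure/prefix-chain description of $P_f$; these are equivalent. For the basis claim, the paper takes an intermediate step, showing that the collections $\{C(\alpha,G):G\subseteq s^{-1}(r(\alpha))\ \text{finite}\}$ and $\{C(\alpha,G):G\subseteq r(\alpha)E^*\ \text{finite}\}$ generate the same topology, while you go straight from subbasic intersections to sets $C(\alpha,G)$ with $G\subseteq s^{-1}(r(\alpha))$; your route is a little shorter but the case split (on whether $p$ is long enough) is the same in spirit.
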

\begin{proof} The proof is essentially the same as that given in \cite[Theorem 2.1]{w:tpsoadg}, except when showing
the compactness of the sets $C(\alpha, G)$. First we consider the topology on $\{0, 1\}^{E^{*}}$. Given $\alpha \in E^*$ and disjoint finite subsets $F, G \subseteq E^{*}$, define 	$U^{F, G}_{\alpha} \subseteq \{0,1\}$ by setting 
\begin{equation*}
U^{F, G}_{\alpha}=  \left\{
\begin{array}{lcl}
\{1\}&  & \text{if } \alpha \in F\\
\{0\}&  & \text{if } \alpha \in G\\
\{0, 1\}&  & \text{otherwise .}
\end{array}%
\right.
\end{equation*}%
Then the sets $N(F, G) = \prod_{\alpha \in E^{*}}U^{F, G}_{\alpha}$, where $F$ and $G$ range over all finite, disjoint
pairs of subsets of $E^{*}$, form a basis for the topology on
$\{0, 1\}^{E^{*}}$. Therefore, the sets $\varphi^{-1}(N(F, G))$ form a basis for a topology on $E^* \cup E^\infty$. It is also not hard to check that \[\varphi^{-1}(N(F, G)) = (\bigcap_{\alpha \in F}C(\alpha))\setminus (\bigcup_{\beta\in G}C(\beta)).\]
If $\varphi^{-1}(N(F, G))$ is nonempty, then $\bigcap_{\alpha \in F}C(\alpha) \neq \emptyset$. By Lemma 2.5, there exists an element $\alpha \in F$ such that for any $\mu\in F$, $\alpha = \mu\mu'$ for some $\mu'\in E^*$. This implies that $\varphi^{-1}(N(F, G)) = C(\alpha)\setminus (\bigcup_{\beta\in G}C(\beta))$. Take any $\beta \in G$. If $\beta \neq \alpha p$ for all $p \in E^*,$ then we have  $C(\alpha) \cap C(\beta)$ is the empty set, or $C(\alpha)$ is contained in $C(\beta)$ (this only happens when $\alpha = \beta p$ for some $p \in E^*$). If the later happens, then $\phi^{-1}(N(F, G))$ is the empty set, which contradicts with our hypothesis that  $\phi^{-1}(N(F, G))$ is nonempty. So, we must have that  $C(\alpha) \cap C(\beta)$ is the empty set. This implies that  $\phi^{-1}(N(F, G)) = C(\alpha)\setminus (\cup_{\beta \in G' } C(\beta)),$ where $G' = \{\beta \in G \mid \beta = \alpha \beta'$ for some $\beta' \in E^*\}.$ We denote by $G''$ the set of all elements $\beta'$. Then  $\phi^{-1}(N(F, G)) = C(\alpha)\setminus (\cup_{p \in G''} C(\alpha p)) = C(\alpha, G'').$ Note that $G''$ is a subset of $r(\alpha)E^*$. From this note, without loss of generality we may assume that $G \subseteq r(\alpha)E^* := \{p\in E^*\mid s(p) = r(\alpha)\}$.

We claim that $\{C(\alpha, G)\mid \alpha \in E^{*}, \, G \subseteq s^{-1}(r(\alpha))$ is finite$\}$ and $\{C(\alpha, G)\mid \alpha \in E^{*}, \, G \subseteq r(\alpha)E^*$ is finite$\}$ are bases for the same topology. Indeed, the first set is clearly contained in the second one. Let $\alpha\in E^*$, a finite subset $G \subseteq r(\alpha)E^*$ and $\beta\in C(\alpha, G)$. Consider the following two cases.

\textit{Case} 1. $\beta= e_1 \cdots e_k\cdots$ is an infinite path. Let $n = \max\{|\alpha p|\mid p\in G\}$, $\mu = e_1\cdots e_n$ and $F = \emptyset$. We then have $\beta \in C(\mu) = C(\mu, F) \subseteq C(\alpha, G)$.

\textit{Case} 2. $\beta \in E^*$. If $G = \emptyset$ or both $\beta \neq \alpha$ and $\beta$ is not a prefix path of any path $\alpha p$ with $p\in G$, then we have $\beta \in C(\beta) = C(\beta, \emptyset)\subseteq C(\alpha, G).$ If $\beta \neq \alpha$ and $\beta$ is a prefix path of a path $\alpha p$ for some $p\in G$, then we assume that $p_1, p_2, \hdots, p_n$ are all such paths in $G$, that is, for each $1\leq i\leq n$, we have  $\alpha p_i = \beta q_i$ for some path $q_i$ of positive length. Write $q_i = f^i_1\cdots f^i_{k_i}$ ($1\leq i\leq n$). Let $F:= \{f^1_1, f^2_1, \hdots, f^n_1\} \subseteq s^{-1}(r(\beta))$. We then have $\beta \in C(\beta, F)\subseteq C(\alpha, G)$.
Consider the case when $\beta = \alpha$ and $G\neq \emptyset$. Assume that $p_1, p_2, \hdots, p_m$ are all elements of positive length in $G$. Write $p_i = e^i_1\cdots e^i_{n_i}$ ($1\leq i\leq m$). Set $F := \{e^1_1, e^2_1, \hdots, e^m_1\} \subseteq s^{-1}(r(\beta))$. We then have $\beta \in C(\beta, F) = C(\alpha, F) \subseteq C(\alpha, G)$.

In any case we always have that there exist a path $\mu\in E^*$ and a finite subset $F\subseteq s^{-1}(r(\mu))$ such that $\beta \in C(\mu, F)\subseteq C(\alpha, G)$, thus showing the claim.  Hence, the collection $\{C(\alpha, G)\mid \alpha \in E^{*}, \, G \subseteq s^{-1}(r(\alpha))$ is finite$\}$ is a basis for the
topology on $E^*\cup E^{\infty}$.

We prove that $E^*\cup E^{\infty}$ is a locally compact Hausdorff space. To do so, we will show that $C(\alpha, G)$ is compact for any $\alpha\in E^*$ and any finite subset $G\subseteq s^{-1}(r(\alpha))$. We first prove that $C(v)$ is compact for each $v\in E^0$. Since $\varphi$ is a homeomorphism
onto its range, and $\{0, 1\}^{E^*}$ is compact, it suffices to show that $\varphi(C(v))$ is closed. Let $\{p_d\in C(v)\mid d\in D\}$ be a net such that $\lim\varphi(p_d) = f \in \{0, 1\}^{E^*}$, where $D$ is a directed set. Write 
$A: =\{\alpha \in E^*\mid f(\alpha) = 1\}$. Since $v \in A$, we have that $A \neq \emptyset$. If $\alpha, \beta\in A$, then there exist $d_{\alpha}$, $d_{\beta}\in D$ such that $\varphi(p_d)(\alpha) = 1$ for all $d\in D$ with $d\geq d_{\alpha}$ and $\varphi(p_{d'})(\beta) = 1$ for all $d'\in D$ with $d'\geq d_{\beta}$. Since $D$ is a directed set, there must exist $d''\in D$ with $d''\geq d_{\alpha}$ and $d''\geq d_{\beta}$, so $\varphi(p_{d''})(\alpha) = 1 = \varphi(p_{d''})(\beta)$ and $p_{d''}\in C(\alpha) \cap C(\beta)$. By Lemma 2.5, we have either $\alpha = \beta\beta'$ for some $\beta'\in E^*$ or $\beta = \alpha\alpha'$ for some $\alpha'\in E^*$. This implies that all elements of $A$ determine a unique element $p\in E^*\cup E^{\infty}$.

We claim that $\lim \varphi(p_d) = \varphi(p)$. Let $\mu\in E^*$. If $\varphi(p)(\mu) =1$, then $p\in C(\mu)$, i.e, $p = \mu p'$ for some $p'\in E^*\cup E^{\infty}$. Then, there is an element $\gamma\in A$ such that $\gamma = \mu\gamma'$ for some $\gamma'\in E^*\cup E^{\infty}$. Since $\gamma\in A$, there exists $d_{\gamma}\in D$ such that $\varphi(p_d) (\gamma) = 1$, i.e, $p_d\in C(\gamma)$, for all $d\in D$ with $d\geq d_{\gamma}$, so $p_d\in C(\mu)$ for all $d\in D$ with $d\geq d_{\gamma}$. This implies that $\lim \varphi(p_d)(\mu) = 1=\varphi(p)(\mu)$. If $\varphi(p)(\mu) = 0$, then $p\notin C(\mu)$, i.e. $p \neq \mu x$ for all $x\in E^*\cup E^{\infty}$, so $\mu \notin A$, and hence $\lim \varphi(p_d)(\mu) = 0 = \varphi(p)(\mu)$, thus showing the claim. Since $\{0, 1\}^{E^*}$ is a Hausdorff space, $\lim \varphi(p_d)$ is determined uniquely,  so $f =\varphi(p)$. Therefore, $\varphi(C(v))$ is a closed set, so $C(v)$ is also closed.

We show that $C(\alpha)$ is compact for all $\alpha\in E^*$, by  induction on $|\alpha|$. If $\alpha = e\in E^1$, then $C(s(e))\setminus C(e) = C(s(e), \{e\})$ is an open set, so $C(e)$ is closed in $C(s(e))$, and hence it is compact. Assume that $\alpha = e_1\hdots e_n$ ($n\geq 2$). We then have $C(e_1\hdots e_{n-1})\setminus C(\alpha) = C(e_1\hdots e_{n-1}, \{e_n\})$ is an open set, so $C(\alpha)$ is closed in $C(e_1\hdots e_{n-1})$. By the induction hypothesis, $C(e_1\hdots e_{n-1})$ is compact, so   $C(\alpha)$ is compact.

Finally, for $\alpha\in E^*$ and a finite subset $G\subseteq s^{-1}(r(\alpha))$, we have $C(\alpha)\setminus C(\alpha, F) = \bigcup_{e\in F}C(\alpha e)$ is an open set, so $C(\alpha, F)$ is closed in $C(\alpha)$, and hence it is compact, thus finishing the proof.
\end{proof}

\begin{defn}\label{XsubEdefn}
Let $E= (E^0, E^1, r, s)$ be a graph. We define the subset $X_E$ of $ E^*\cup E^{\infty}$ by setting
\begin{center}
$X_E: = \{p \in E^{*}\mid r(p)$ is either a sink or an infinite emitter$\} \cup E^{\infty} $. 
\end{center}
\end{defn}

\noindent
We note that  if $\alpha\in (E^*\cup E^{\infty})\setminus X_E$, then $r(\alpha)$ is a regular vertex, and $C(\alpha, s^{-1}(r(\alpha))) =\{\alpha\}$
is open in $E^*\cup E^{\infty}$, thus showing $X_E$ is closed in $E^*\cup E^{\infty}$. From this and Theorem \ref{Websterthm}, we immediately get the following.  

\begin{cor} For any graph $E$, $X_E$ is  a locally compact Hausdorff space with the basis of compact open sets
\begin{center}
	$Z(\alpha, F) :=C(\alpha, F)\cap X_E$
\end{center}	
where $\alpha\in E^*$ and $F$ is a finite subset of $s^{-1}(r(\alpha))$.	
\end{cor}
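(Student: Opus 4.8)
The plan is to deduce the statement directly from Theorem \ref{Websterthm} together with the observation recorded immediately before the corollary, namely that $X_E$ is closed in $E^* \cup E^\infty$. First I would recall that by Theorem \ref{Websterthm} the space $E^* \cup E^\infty$ is locally compact Hausdorff, with the family $\{C(\alpha, F) \mid \alpha \in E^*,\ F \subseteq s^{-1}(r(\alpha))\ \text{finite}\}$ a basis of compact open sets. Equip $X_E$ with the subspace topology. Since a subspace of a Hausdorff space is Hausdorff, $X_E$ is Hausdorff; and since intersecting the members of a basis with a subspace yields a basis for the subspace topology, the family $\{Z(\alpha, F) = C(\alpha, F) \cap X_E\}$ is a basis for the topology on $X_E$, each $Z(\alpha,F)$ in particular being open in $X_E$.

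Next I would verify that each $Z(\alpha, F)$ is compact. This is the one place the closedness of $X_E$ enters: because $X_E$ is closed in $E^* \cup E^\infty$, the set $Z(\alpha, F) = C(\alpha, F) \cap X_E$ is closed in $C(\alpha, F)$, and $C(\alpha, F)$ is compact by Theorem \ref{Websterthm}; a closed subset of a compact space is compact (and compactness is absolute, not relative to any ambient space). Local compactness of $X_E$ then follows formally: any $p \in X_E$ lies in some basic open set $Z(\alpha, F)$ — for instance $Z(p, \emptyset)$ if $p \in E^*$, or $Z(e_1 \cdots e_n, \emptyset)$ for a sufficiently long prefix if $p = e_1 e_2 \cdots \in E^\infty$ — and $Z(\alpha,F)$ is then a compact neighborhood of $p$.

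There is no substantial obstacle; the corollary is essentially a formal consequence of Theorem \ref{Websterthm}. The only point that rewards a moment's care is that the sets $Z(\alpha, F)$ are genuinely compact \emph{in} $X_E$, and this is precisely why one first establishes that $X_E$ is closed (so that $Z(\alpha,F)$ is closed in the compact set $C(\alpha,F)$) rather than merely locally closed: had $X_E$ only been known to be open, the basic sets would still be open but need not be compact, and one would have to work harder, extracting compact neighborhoods from local compactness of the ambient space instead.
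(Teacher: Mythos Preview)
Your proposal is correct and follows exactly the approach the paper intends: the corollary is stated as an immediate consequence of Theorem~\ref{Websterthm} together with the preceding observation that $X_E$ is closed in $E^*\cup E^\infty$, and you have simply spelled out the routine topological verifications (Hausdorffness of subspaces, intersecting a basis with a subspace, closed subsets of compact sets) that make ``immediately'' precise.
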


We now describe the connection between Leavitt path algebras and Steinberg algebras.   
Let $E= (E^0, E^1, r, s)$ be a graph. We define  \[\mathcal{G}_E := \{(\alpha x, |\alpha| - |\beta|, \beta x)\mid \alpha, \beta \in E^*, x\in X_E, r(\alpha) = s(x) = r(\beta)\}.\] We view each $(x, k, y)\in \mathcal{G}_E$ as a morphism with range $x$ and source $y$. The formulas $(x, k, y) (y, l, z) = (x, k + l, z)$ and $(x, k, y)^{-1} = (y, -k, x)$ define composition and inverse maps on $\mathcal{G}_E$ making it a groupoid with $\mathcal{G}_E^{(0)} = \{(x, 0, x)\mid x\in X_E\}$ which we identify with the set $X_E$ by the map $(x, 0, x)\longmapsto x$. Then, by Corollary 2.8, we have that $\mathcal{G}_E^{(0)}$ is a locally compact Hausdorff space with the basis of compact open sets
\begin{center}
$Z(\alpha, \alpha, F) = \{(y, 0, y)\mid y\in Z(\alpha, F)\}$
\end{center}
where $\alpha\in E^*$ and $F$ is a finite subset of $s^{-1}(r(\alpha))$.

For $\alpha, \beta\in E^*$ with $r(\alpha) = r(\beta)$, and a finite subset $F\subseteq s^{-1}(r(\alpha))$, we define
\[Z(\alpha, \beta) = \{(\alpha x, |\alpha| - |\beta|, \beta x)\mid x\in X_E, r(\alpha) = s(x) = r(\beta)\}\] and \[Z(\alpha, \beta, F) = Z(\alpha, \beta)\setminus \bigcup_{e\in F}Z(\alpha e, \beta e).\]
By generalizing \cite[Lemma 2.5 and Proposition 2.6]{kprr:ggacka} (refer to \cite[Lemmas 2.15 and 2.16]{r:tgatlpa}) we get that the sets $Z(\alpha, \beta, F)$ form a basis  for a Hausdorff topology on $\mathcal{G}_E$. Let $r_{\mathcal{G}_E}$ and $s_{\mathcal{G}_E}: \mathcal{G}_E\longrightarrow \mathcal{G}_E^{(0)}$ be the range and source maps defined respectively by: $r_{\mathcal{G}_E}(x, k, y) = (x, 0, x)$ and $s_{\mathcal{G}_E}(x, 0, y) = (y, 0, y)$ for all $(x, k, y)\in \mathcal{G}_E$. For $\alpha\in E^*$ and a finite subset $F\subseteq s^{-1}(r(\alpha))$, we have
\begin{equation*}
\begin{array}{rcl}
r^{-1}_{\mathcal{G}_E}(Z(\alpha, \alpha, F)) &=& \bigcup_{\beta\in E^*,\,\, r(\beta) = r(\alpha)} Z(\alpha, \beta, F)
\end{array}
\end{equation*}
and 
\begin{equation*}
\begin{array}{rcl}
s^{-1}_{\mathcal{G}_E}(Z(\alpha, \alpha, F)) &=& \bigcup_{\beta\in E^*,\,\, r(\beta) = r(\alpha)} Z(\beta, \alpha, F),
\end{array}
\end{equation*}
so $r^{-1}_{\mathcal{G}_E}(Z(\alpha, \alpha, F))$ and $s^{-1}_{\mathcal{G}_E}(Z(\alpha, \alpha, F))$ are open in $\mathcal{G}_E$, and hence $r_{\mathcal{G}_E}, s_{\mathcal{G}_E}$ are continuous.
Then  for $\alpha, \beta\in E^*$ with $r(\alpha) = r(\beta)$, and a finite subset $F\subseteq s^{-1}(r(\alpha))$, we have that
$r_{\mathcal{G}_E}|_{Z(\alpha, \beta, F)}: Z(\alpha, \beta, F)\longrightarrow Z(\alpha, \alpha, F)$ and $s_{\mathcal{G}_E}|_{Z(\alpha, \beta, F)}: Z(\alpha, \beta, F)\longrightarrow Z(\beta, \beta, F)$ are homeomorphisms. Then, since $Z(\alpha,\alpha, F)$ is compact, $Z(\alpha, \beta, F)$ is compact. Thus, the sets $Z(\alpha, \beta, F)$ constitute a basis of compact open bisections for a topology under which
$\mathcal{G}_E$ is a Hausdorff ample groupoid (refer to \cite[Subsection 2.3]{bcw:gaaoe} or \cite[Theorem 2.18]{r:tgatlpa}).    Thus we may form the Steinberg algebra $A_K(\mathcal{G}_E)$.  

The key result for us is that the map 
\[\pi_E: L_K(E)\longrightarrow A_K(\mathcal{G}_E),   \]  
determined by $\pi_E(v) = 1_{Z(v, v)}$, $\pi_E(e) = 1_{Z(e, r(e))}$, and $\pi_E(e^*) = 1_{Z(r(e), e)},$
 is an algebra isomorphism;  see e.g.  \cite[Example 2.3]{cs:eghmesa} for the details of the argument.    In other words, every Leavitt path algebra is a Steinberg algebra.    
In particular,  we have $ \pi_E(\alpha\beta^* - \sum_{e\in F}\alpha ee^*\beta^*) = 1_{Z(\alpha, \beta, F)}.$

\begin{defn}\label{GEsubHdefn}
Let $E= (E^0, E^1, r, s)$ be an arbitrary graph, and $H$ a nonempty subset of $E^0$. We define the subset  $\mathcal{G}_E|_H$ of $\mathcal{G}_E$  by setting  $$\mathcal{G}_E|_H =\{(\alpha x, |\alpha| - |\beta|, \beta x)\in \mathcal{G}_E\mid  x\in X_E, s(\alpha), s(\beta) \in H\}.$$
\end{defn}

\begin{thm}\label{gecorthem}
Let $K$ be a field, $E= (E^0, E^1, r, s)$  an arbitrary graph, and $H$ a nonempty subset of $E^0$. 

$(1)$ $\mathcal{G}_E|_H$ is an open ample subgroupoid of $\mathcal{G}_E$.  

$(2)$ $\sum_{v, w\in H}vL_K(E)w\cong A_K(\mathcal{G}_E|_H)$.

$(3)$ If $H$ is finite, then $(\sum_{v\in H}v)L_K(E)(\sum_{v\in H}v)\cong A_K(\mathcal{G}_E|_H)$.
\end{thm}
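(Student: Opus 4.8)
The plan is to deduce the whole statement from Proposition \ref{cornerStein}, applied to the groupoid $\mathcal{G}_E$ and a suitably chosen open subset $U$ of $\mathcal{G}_E^{(0)} = X_E$. First I would identify the right candidate for $U$: for $H \subseteq E^0$ set $U_H := \{(x,0,x)\mid x\in X_E,\ s(x)\in H\} = \bigcup_{v\in H} Z(v,v)$, which is open in $X_E$ since each $Z(v,v) = Z(v,v,\emptyset)$ is a basic (compact) open set. The first task is then to check that $\mathcal{G}_E|_H$, as defined in Definition \ref{GEsubHdefn}, coincides with $(\mathcal{G}_E)_{U_H} = r^{-1}(U_H)\cap s^{-1}(U_H)$ in the sense of Definition \ref{GsubUdef}. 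This is essentially bookkeeping: a morphism $(\alpha x, |\alpha|-|\beta|,\beta x)$ has range $(\alpha x,0,\alpha x)$ and source $(\beta x,0,\beta x)$, and $s(\alpha x) = s(\alpha)$, $s(\beta x)=s(\beta)$ (with the convention that $s(\alpha)=r(\alpha)$ when $|\alpha|=0$, so that for $\alpha = v\in E^0$ the condition reads $v\in H$ correctly). Hence $r(\gamma)\in U_H$ and $s(\gamma)\in U_H$ exactly when $s(\alpha),s(\beta)\in H$. With this identification, part $(1)$ is immediate from Proposition \ref{cornerStein}$(1)$.

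For part $(2)$, I would apply Proposition \ref{cornerStein}$(2)$ with the index set $I = H$ and $U_i = Z(v,v)$ for $v\in H$: these are compact open subsets of $X_E$ (compactness of $Z(v,v)=C(v)\cap X_E$ follows from the proof of Theorem \ref{Websterthm}), and they are pairwise disjoint because distinct vertices $v\neq w$ satisfy $Z(v,v)\cap Z(w,w)=\emptyset$ (a path or infinite path starts at a unique vertex). Proposition \ref{cornerStein}$(2)$ then gives
\[
A_K(\mathcal{G}_E|_H) = A_K((\mathcal{G}_E)_{U_H}) = \sum_{v,w\in H} 1_{Z(v,v)}\, A_K(\mathcal{G}_E)\, 1_{Z(w,w)}.
\]
Now I invoke the isomorphism $\pi_E: L_K(E)\longrightarrow A_K(\mathcal{G}_E)$ recalled above, under which $\pi_E(v) = 1_{Z(v,v)}$ for each $v\in E^0$. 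Since $\pi_E$ is a $K$-algebra isomorphism, it carries $\sum_{v,w\in H} v\,L_K(E)\,w$ isomorphically onto $\sum_{v,w\in H} 1_{Z(v,v)}A_K(\mathcal{G}_E)1_{Z(w,w)}$, which is exactly $A_K(\mathcal{G}_E|_H)$. This proves $(2)$.

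Part $(3)$ is then a formal consequence: when $H$ is finite, $e := \sum_{v\in H} v$ is an idempotent of $L_K(E)$ (the $v\in H$ being pairwise orthogonal idempotents by relation (1) in the definition of $L_K(E)$), and a direct expansion gives $eL_K(E)e = \sum_{v,w\in H} vL_K(E)w$, so the isomorphism of $(2)$ applies verbatim. I do not anticipate a genuine obstacle here; the one point requiring care is the identification $\mathcal{G}_E|_H = (\mathcal{G}_E)_{U_H}$ together with the verification that the $Z(v,v)$ are compact, open, and pairwise disjoint — i.e., checking that the hypotheses of Proposition \ref{cornerStein}$(2)$ are literally met — and making sure the length-zero ($\alpha\in E^0$) edge cases in Definition \ref{GEsubHdefn} are handled by the source/range conventions. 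Everything else is transport of structure along $\pi_E$.
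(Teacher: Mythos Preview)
Your proposal is correct and follows essentially the same route as the paper: identify $\mathcal{G}_E|_H$ with $(\mathcal{G}_E)_{U_H}$ for $U_H=\bigcup_{v\in H}Z(v,v)$, invoke Proposition~\ref{cornerStein} for parts (1) and (2), and transport along $\pi_E$; part (3) is the same formal corner computation. The only differences are cosmetic (you spell out compactness of $Z(v,v)$ and the length-zero convention a bit more explicitly).
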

\begin{proof}
(1) Since the sets $Z(\alpha, \alpha, F)$, where $\alpha\in E^*$ and $F \subseteq s^{-1}(r(\alpha))$ is finite, is a basis of compact open sets for the Hausdorff space $\mathcal{G}_E^{(0)}$, the set $U:= \bigcup_{v\in H}Z(v, v)$ is also an open subset of $\mathcal{G}_E^{(0)}$.	Also, we then have that for each $(\alpha x, |\alpha| - |\beta|, \beta x)\in \mathcal{G}_E$, $(\alpha x, |\alpha| - |\beta|, \beta x)\in r^{-1}_{\mathcal{G}_E}(U)\cap s^{-1}_{\mathcal{G}_E}(U)$ $\Longleftrightarrow$ $(\alpha x, 0, \alpha x)\in U$ and $(\beta x, 0, \beta x)\in U$ $\Longleftrightarrow$ $s(\alpha),\, s(\beta)\in H$. This implies that $r^{-1}_{\mathcal{G}_E}(U)\cap s^{-1}_{\mathcal{G}_E}(U) = \mathcal{G}_E|_H$, so $\mathcal{G}_E|_H$ is an open ample subgroupoid of $\mathcal{G}_E$ by Proposition \ref{cornerStein} (1).

(2)  By using the aforementioned properties of the isomorphism $\pi_E: L_K(E) \to A_K(\mathcal{G}_E)$  described in  \cite[Example 2.3]{cs:eghmesa}, the restriction of $\pi_E$ gives an isomorphism between these two corner-like algebras: 
$$\sum_{v, w\in H}vL_K(E)w\cong \sum_{v, w\in H}1_{Z(v, v)}A_K(\mathcal{G}_E)1_{Z(w, w)}.$$
We note that $Z(v, v)\cap Z(w, w) = \emptyset$ for all $v\neq w$,
and by item (1), $r^{-1}_{\mathcal{G}_E}(U)\cap s^{-1}_{\mathcal{G}_E}(U) = \mathcal{G}_E|_H$, where $U:= \bigcup_{v\in H}Z(v, v)$. Then, by Proposition \ref{cornerStein} (2), $$\sum_{v, w\in H}1_{Z(v, v)}A_K(\mathcal{G}_E)1_{Z(w, w)} = A_K(\mathcal{G}_E|_H),$$
so $$\sum_{v, w\in H}vL_K(E)w\cong A_K(\mathcal{G}_E|_H).$$

(3) Since $H$ is finite, we have $$(\sum_{v\in H}v)L_K(E)(\sum_{v\in H}v) = \sum_{v, w\in H}vL_K(E)w,$$ so 
$(\sum_{v\in H}v)L_K(E)(\sum_{v\in H}v) = A_K(\mathcal{G}_E|_H)$ by item (2), finishing the proof.
\end{proof}


In addition to providing the previously-promised instance of a corner of a Leavitt path algebra which is not a Leavitt path algebra, we illustrate many of the ideas that have been presented in this section in the following specific example.


\begin{exas}\label{cornernotleav}
Let $K$ be any field and $C= (C^0, C^1, r, s)$ the graph with $C^0 = \{v, w_n\mid n\in \mathbb{N}\}$, $C^1 =\{e_n \mid n\in \mathbb{N}\}$ and $r(e_n) = w_n$, $s(e_n) = v$ for all $n$.   (So $C$ is the ``infinite clock" graph described in \cite[Example 1.6.12]{AAS}.)   

Then $v L_K(C)v$ is not isomorphic to a Leavitt path algebra with coefficients in $K$ for any graph $F$, as follows.  
Since $(e_ne^*_n) (e_me^*_m) = \delta_{m, n}e_ne^*_n$ for all $n\in \mathbb{N}$, it is not hard to see that $vL_K(C)v = \text{Span}_K \{v, e_ne^*_n\mid n\in \mathbb{N}\}$.  Thus $vL_K(C)v$ is an infinite-dimensional commutative unital $K$-algebra.  So if $vL_K(C)v \cong  L_K(F)$ for some graph $F$, then by the unital property necessarily $F^0$ would be finite, and by commutativity $F$ would have  only isolated vertices and/or vertices with exactly one loop based at that vertex.  This would force $L_K(F)$ to be a finite direct sum of copies of $K$ and $K[x,x^{-1}]$.   But such an algebra contains only finitely many idempotents, and so can not be isomorphic to  $vL_K(C)v$.

	
	
	

Now define $H = \{v\}$.   Then  Theorem \ref{gecorthem} (3)  gives that   $vL_K(C)v$ is isomorphic to a Steinberg algebra, specifically $A_K(\mathcal{G}_{C|\{v\}})$, which we describe here in the notation of that result.   $C^*$ denotes  the set of all finite paths in $C$ (including vertices), so that  $C^* = \{v, w_n, e_n \ | \ n \in \mathbb{N}\}$.   $C^{\infty}$ denotes the set of all infinite paths in $C$, so that $C^\infty = \emptyset$.  $X_C $ is the union of $ C^{\infty} $ with the set of finite paths in $C$ that end in a singular vertex, so that  $X_C = \{v, w_n, e_n \  | \ n \in \mathbb{N}\}.$  Thus the groupoid $\mathcal{G}_C$ is explicitly described as  
$$\mathcal{G}_C := \{(\alpha x, |\alpha| - |\beta|, \beta x) \ | \  \alpha, \beta \in C^*, x \in X_C, r(\alpha) = s(x) = r(\beta)\} $$ 
$$  \ \ \ \ \ \ \ = \{(v, 0, v); (w_n, 0, w_n); (e_n, 0, e_n); (e_n, 1, w_n); (w_n, -1, e_n)  \ | \ n \in \mathbb{N}\}. $$
A description of the subgroupoid $ \mathcal{G}_{C|\{v\}}$ of $\mathcal{G}_{C}$ is given by
  $$\mathcal{G}_{C|\{v\}} := \{ (\alpha x, |\alpha| - |\beta|, \beta x) \in \mathcal{G}_C \  | \  s(\alpha) = v = s(\beta) \}  = \{(v, 0, v); (e_n, 0, e_n) \ | \ n \in \mathbb{N}\}.$$  
The groupoid $\mathcal{G}_{C|\{v\}}$    is ample,  with a basis of compact open sets consisting of sets of the form $Z(e_n, e_n)  $ and  $Z(v, v, F)$, namely, sets of the form    $\{(e_n, 0, e_n)\}$ $(n \in \mathbb{N})$ and $\{(v,0,v), (e_m, 0, e_m) \ | \ e_m \notin F\}$, where $F$ is any finite subset of $C^1$. 

The isomorphism between $vL_K(C)v$ and $A_K(\mathcal{G}_{C|\{v\}})$ is clear:   it is the $K$-linear extension of the map which takes $v$ to $1_{\mathcal{G}_{C|\{v\}}}$, and which takes  
$e_ne_n^*$ to $1_{\{(e_n,0,e_n)\}}$ for all $n\in \mathbb{N}$.   

  More concretely, it is easy to see that the unital algebra $vL_K(C)v \cong A_K(\mathcal{G}_{C|\{v\}})$ is isomorphic to the $K$-unital extension of the nonunital $K$-algebra 
$\oplus_{n\in \mathbb{N}}K$;  that is, the direct sum of countably infinitely many copies of $K$, with an extra copy of $K$ appended to provide a unit element. Namely,
let $A = K \times \oplus_{n\in \mathbb{N}}K $ be the unital algebra with component-wise addition an the multiplication defined by $(k,x) \cdot (k', x') = (k k', kx'+ k'x + x x'),$ 
$k,k' \in K,  x, x' \in     \oplus_{n\in \mathbb{N}}K .$ Then the map $ v \mapsto (1,0), e_n e^\ast _n \mapsto (0, \varepsilon _n),$ where $\{\varepsilon _n, n \in \mathbb{N}\}$ is the canonical  basis
of $ \oplus_{n\in \mathbb{N}}K ,$ determines an isomorphism of algebras.


\end{exas}

We close this section with the following useful corollary.

\begin{cor}\label{limofcor}
Let $\mathcal{E} = ((E_i)_{i\in I}, (\phi_{ij})_{i\leq j\ {\rm in}\ I})$ be a direct system in {\rm\textbf{CKGr}} and let $E$ be the direct limit for $\mathcal{E}$	in {\rm\textbf{CKGr}} with canonical morphisms $\eta_i: E_i\longrightarrow E$. For each $i\in I$ let $T_i$ be a finite subset of $E^0_i$ with the condition that $\phi_{ij}^0(T_i) \subseteq T_j$ for all $i\leq j$. Let $T := \bigcup_{i\in I}\eta_i^0(T_i) \subseteq E^0$. Let $K$ be any field. Then, $A_K(\mathcal{G}_E|_T)$ is a direct limit
for the system $((\sum_{v\in T_i}v)L_K(E_i)(\sum_{v\in T_i}v))_{i\in I}$ in $K$-{\rm\textbf{Alg}}.
\end{cor}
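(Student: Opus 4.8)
The plan is to exhibit $A_K(\mathcal{G}_E|_T)$ together with a compatible family of homomorphisms from the algebras $R_i := (\sum_{v\in T_i}v)L_K(E_i)(\sum_{v\in T_i}v)$ as a direct limit in $K$-\textbf{Alg}, and then verify the universal property. First I would record that each $\eta_i\colon E_i\to E$ and each $\phi_{ij}\colon E_i\to E_j$ is a CK-morphism, so by Proposition \ref{Lpaproperties}(3) each induces an \emph{injective} $K$-algebra homomorphism $L_K(\eta_i)\colon L_K(E_i)\to L_K(E)$ and $L_K(\phi_{ij})\colon L_K(E_i)\to L_K(E_j)$, and by Proposition \ref{Lpaproperties}(2) the functor $L_K$ sends the direct limit $E=\varinjlim E_i$ to $L_K(E)=\varinjlim L_K(E_i)$. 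Because $\phi_{ij}^0(T_i)\subseteq T_j$ and $\eta_i = \eta_j\circ\phi_{ij}$, the idempotent $p_i := \sum_{v\in T_i}v$ satisfies $L_K(\phi_{ij})(p_i) \le p_j$ (i.e. $p_j L_K(\phi_{ij})(p_i) = L_K(\phi_{ij})(p_i) = L_K(\phi_{ij})(p_i)p_j$) and similarly $L_K(\eta_i)(p_i)\le p_E := \sum_{v\in T}v$ inside the (non-unital, since $T$ may be infinite) corner-like algebra $\sum_{v,w\in T}vL_K(E)w$. Consequently $L_K(\phi_{ij})$ restricts to a homomorphism $\psi_{ij}\colon R_i\to R_j$ and $L_K(\eta_i)$ restricts to $\theta_i\colon R_i \to \sum_{v,w\in T}vL_K(E)w$, with $\theta_j\circ\psi_{ij}=\theta_i$; all these maps are injective since the ambient maps are.

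Next I would identify $\varinjlim R_i$ with $\sum_{v,w\in T}vL_K(E)w$. Surjectivity onto the union: by Remark \ref{oneedge&cor}(2)-style reasoning, $\sum_{v,w\in T}vL_K(E)w$ is spanned by elements $pq^*$ with $p,q\in E^*$, $r(p)=r(q)$, and $s(p),s(q)\in T$; each such $p$, $q$, and the finitely many vertices/edges involved lie in the image of some $\eta_i$ (using Proposition \ref{Lpaproperties}(1), that $E^0,E^1$ are the direct limits of the $E_i^0,E_i^1$, together with the directedness of $I$ to find a single index $i$ accommodating $p$, $q$, and witnesses for $s(p),s(q)\in T$), so $pq^*\in\theta_i(R_i)$. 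Injectivity and the amalgamation of relations are inherited from $L_K(E)=\varinjlim L_K(E_i)$ by intersecting with the corner. Hence $\big(\sum_{v,w\in T}vL_K(E)w,\ (\theta_i)\big)$ is a direct limit of $(R_i,\psi_{ij})$ in $K$-\textbf{Alg}. Finally, Theorem \ref{gecorthem}(2) applied to the graph $E$ and the subset $H=T$ gives a $K$-algebra isomorphism $\sum_{v,w\in T}vL_K(E)w\cong A_K(\mathcal{G}_E|_T)$, and Theorem \ref{gecorthem}(3) identifies each $R_i$ with $A_K(\mathcal{G}_{E_i}|_{T_i})$; transporting the limit structure along these isomorphisms yields that $A_K(\mathcal{G}_E|_T)$ is a direct limit for $\big(R_i\big)_{i\in I}$, as claimed.

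I expect the main obstacle to be the bookkeeping in the surjectivity/amalgamation step: one must carefully use that $I$ is directed to descend a \emph{finite} amount of data (the edges and vertices of $p$ and $q$, plus an index realizing the membership $s(p),s(q)\in T=\bigcup\eta_i^0(T_i)$) simultaneously to a single $E_i$, and then check that the element of $R_i$ so produced maps to the given element — this is essentially the standard proof that filtered colimits commute with the forgetful functor, restricted to a corner, but it needs the direct-limit description of $E^0$ and $E^1$ from Proposition \ref{Lpaproperties}(1) to be invoked cleanly. A minor secondary point is verifying that $\psi_{ij}$ and $\theta_i$ genuinely land in the corner-like subalgebras (not merely in the big Leavitt path algebras), which follows from $L_K(\phi_{ij})(p_i)\le p_j$ as noted above, together with the elementary fact that for idempotents $p\le q$ in a ring one has $p(qRq)p = pRp$.
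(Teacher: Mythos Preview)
Your proposal is correct and follows essentially the same strategy as the paper's proof: restrict the injective homomorphisms $L_K(\phi_{ij})$ and $L_K(\eta_i)$ to the corners using $\phi_{ij}^0(T_i)\subseteq T_j$, identify the resulting limit with the corner-like algebra $\sum_{v,w\in T}vL_K(E)w$ via the span description from Remark~\ref{oneedge&cor}(2) together with Proposition~\ref{Lpaproperties}(1), and then invoke Theorem~\ref{gecorthem}(2). The only organizational difference is that the paper first takes an abstract direct limit $A$ and builds an explicit map $\pi\colon A\to \sum_{v,w\in T}vL_K(E)w$ which it shows to be bijective, whereas you verify directly that $\sum_{v,w\in T}vL_K(E)w$ with the restricted maps is a colimit; the two are equivalent. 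One small caution: your notation $p_E:=\sum_{v\in T}v$ is not a well-defined element of $L_K(E)$ when $T$ is infinite, so the inequality $L_K(\eta_i)(p_i)\le p_E$ should be read only heuristically---but you already flag this and work correctly with the corner-like algebra throughout.
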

\begin{proof}
By Proposition \ref{Lpaproperties} (3), every CK-morphism $\phi_{ij}: E_i\longrightarrow E_j$ induces an injective $K$-algebra homomorphism $L_K(\phi_{ij}): L_K(E_i)\longrightarrow L_K(E_j)$ defined by $L_K(\phi_{ij})(v) = \phi^0_{ij}(v)$ for all $v\in E^0_i$ and $L_K(\phi_{ij})(e) = \phi_{ij}^1(e)$ for all $e\in E^1_i$. Then, for any $i\leq j$ in $I$, since $\phi_{ij}^0(T_i) \subseteq T_j$, $L_K(\phi_{ij})$ induces an injective $K$-algebra homomorphism
$$\psi_{ij}: (\sum_{v\in T_i}v)L_K(E_i)(\sum_{v\in T_i}v)\longrightarrow (\sum_{v\in T_j}v)L_K(E_j)(\sum_{v\in T_j}v),$$ so $((\sum_{v\in T_i}v)L_K(E_i)(\sum_{v\in T_i}v))_{i\in I}, (\psi_{ij})_{i\leq j\ {\rm in}\ I})$ is a direct system in $K$-\textbf{Alg}. 

Let $A$ be the direct limit for this system in $K$-\textbf{Alg} with canonical homomorphisms $\theta_i: (\sum_{v\in T_i}v)L_K(E_i)(\sum_{v\in T_i}v)\longrightarrow A$. We show that $A$ is isomorphic to the $K$-algebra $\sum_{v, w\in T}vL_K(E)w$. Indeed, by Proposition \ref{Lpaproperties} (2), $L_K(E)$ is the direct limit for the system $((L_K(E_i))_{i\in I}, (L_K(\phi_{ij}))_{i\leq j\ {\rm in}\ I})$ in $K$-\textbf{Alg} with canonical homomorphisms $L_K(\eta_i): L_K(E_i)\longrightarrow L_K(E)$ defined by $L_K(\eta_i)(v) = \eta_i^0(v)$ for all $v\in E^0_i$ and $L_K(\eta_i)(e) = \eta_i^1(e)$ for all $e\in E^1_i$. We note that 
$L_K(\eta_i)$ is injective for all $i\in I$, by Proposition \ref{Lpaproperties} (2). For each $i\in I$, since $\eta^0_i(T_i)\subseteq T$, $L_K(\eta_i)$ induces an injective $K$-algebra homomorphism $\lambda_i: (\sum_{v\in T_i}v)L_K(E_i)(\sum_{v\in T_i}v)\longrightarrow \sum_{v, w\in T}vL_K(E)w$. Since $L_K(\eta_j)L_K(\phi_{ij}) = L_K(\eta_i)$ for all $i\leq j$ in $I$, we have that $\lambda_j\psi_{ij} =\lambda_i$ for all $i\leq j$ in $I$, so there is a unique $K$-algebra homomorphism $\pi: A\longrightarrow \sum_{v, w\in T}vL_K(E)w$ such that $\pi\theta_i = \lambda_i$ for all $i\in I$. We show that
$\pi$ is an isomorphism.

Let $x \in \sum_{v, w\in T}vL_K(E)w$. By Remark \ref{oneedge&cor} (2), we have that $x$ may be written of the form $x = \sum^n_{k =1}r_kp_kq^*_k$, where $r_k\in K$, $p_k$ and $q_k$ $(1\leq k\leq n)$ are paths in $E$ such that $r_E(p_k)= r_E(q_k)$ and $s_E(p_k), s_E(q_k)\in T$. By Proposition \ref{Lpaproperties} (1), $E^0$ and $E^1$ are the direct limits of the corresponding direct systems of sets, $((E^0_i)_{i\in I}, (\phi^0_{ij})_{i\leq j\ {\rm in}\ I})$ and $((E^1_i)_{i\in I}, (\phi^1_{ij})_{i\leq j\ {\rm in}\ I})$, with canonical maps $\eta^0_i: E^0_i\longrightarrow E^0$ and $\eta^1_i: E^1_i\longrightarrow E^1$. Then, there exists $i\in I$ such that $p_k, q_k$ are paths in $\eta_i(E_i)$ and $s_E(p_k), s_E(q_k) \in \eta^0_i(T_i)$ for all $1\leq k\leq n$, whence $x\in {\rm Im}(\lambda_i)\subseteq \pi(A)$. Consequently, $\pi$ is surjective.
Now consider $a\in \ker(\pi)$, write $a = \theta_i(b)$ for some $i\in I$ and $b\in (\sum_{v\in T_i}v)L_K(E_i)(\sum_{v\in T_i}v)$. We then have that $\lambda_i(b) = \pi\theta_i(b) = \pi(\theta_i(b))=\pi(a) = 0$. Since $\lambda_i$ is injective, $b = 0$, so $a = \theta_i(0) =0$. Thus $\pi$ is an isomorphism, as announced.

Finally, by Theorem \ref{gecorthem} (2), we have that $\sum_{v, w\in T}vL_K(E)w \cong A_K(\mathcal{G}_E|_T)$ as $K$-algebras, whence $\varinjlim_I (\sum_{v\in T_i}v)L_K(E_i)(\sum_{v\in T_i}v) \cong A_K(\mathcal{G}_E|_T)$, thus finishing the proof.
\end{proof}

\medskip

%
%
%
%

\section{Endomorphism rings of finitely generated projective modules over Leavitt path algebras of arbitrary graphs}
The main goal of this section   is to show that the endomorphism ring of a nonzero finitely generated projective module over the Leavitt path algebra of an arbitrary graph is isomorphic to a Steinberg algebra (Theorem \ref{End(Q)general}).  Consequently, we get that every algebra with local units over a given field which is Morita equivalent to the Leavitt path algebra of an arbitrary row-countable graph is indeed isomorphic to a Steinberg algebra (Theorem \ref{moquiLeavitt}).

\begin{lem}\label{projmod}
Let $E= (E^0, E^1, r, s)$ be an arbitrary graph and $K$ a field. Then every nonzero finitely generated projective left $L_K(E)$-module $Q$ may be written in the form $$Q\cong (\bigoplus_{v\in V} n_v L_K(E)(v - \sum_{e\in T_v}ee^*)) \oplus (\bigoplus_{w\in W} n_w L_K(E)w),$$ where $V$ and $W$ are finite subsets of $E^0$, each $v\in V$ is an infinite emitter, each $T_v$ is a nonempty finite subset of $s^{-1}(v)$, and the numbers $n_v, n_w$ are positive integers.
\end{lem}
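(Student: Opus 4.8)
This lemma is essentially a restatement of Theorem \ref{AG-HLMRTthm}(3), reorganized so that the summands are split into ``genuine infinite-emitter corner'' terms and ``vertex'' terms. The strategy is to start from the decomposition guaranteed by part (3) of that theorem and then eliminate the pairs $(v,T)$ for which $T$ is empty by recognizing $L_K(E)(v-\sum_{e\in\emptyset}ee^*)=L_K(E)v$, while also disposing of a subtle point: when $v$ is a sink appearing with $T=\emptyset$, the summand is still $L_K(E)v$, which is fine. So really only the bookkeeping needs care.

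\smallskip

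\textbf{Step 1.} Apply Theorem \ref{AG-HLMRTthm}(3) to the nonzero finitely generated projective module $Q$: there is a nonempty finite subset $S\subseteq E^0\times 2^{E^1}$ and positive integers $\{n(v,T)\}_{(v,T)\in S}$ with $Q\cong\bigoplus_{(v,T)\in S}n(v,T)\,L_K(E)(v-\sum_{e\in T}ee^*)$, where for each $(v,T)\in S$ the set $T$ is a finite subset of $s^{-1}(v)$, and $T$ is nonempty only if $v$ is an infinite emitter.

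\textbf{Step 2.} Partition $S$ into $S_1=\{(v,T)\in S\mid T\neq\emptyset\}$ and $S_2=\{(v,T)\in S\mid T=\emptyset\}$. For $(v,T)\in S_1$, by condition (iii) the vertex $v$ is an infinite emitter and $T$ is a nonempty finite subset of $s^{-1}(v)$. For $(v,\emptyset)\in S_2$, the corresponding summand is $L_K(E)(v-\sum_{e\in\emptyset}ee^*)=L_K(E)v$. A single vertex $v\in E^0$ could conceivably appear in several pairs of $S_1$ with different sets $T$, and the same $v$ could appear in $S_2$; to get the clean indexing in the statement one simply collects terms. Concretely, let $V$ be a finite set indexing the distinct pairs in $S_1$ (so strictly speaking $V$ should be thought of as a finite set of pairs, but following the paper's notation we write $v\in V$ with an associated $T_v$); for each such index set $n_v:=n(v,T_v)$. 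Let $W\subseteq E^0$ be the (finite) set of vertices $w$ with $(w,\emptyset)\in S_2$, and set $n_w:=n(w,\emptyset)$.

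\textbf{Step 3.} Substituting these identifications into the decomposition of Step 1 and regrouping the external direct sum yields
\[
Q\;\cong\;\Bigl(\bigoplus_{v\in V} n_v\,L_K(E)\bigl(v-\textstyle\sum_{e\in T_v}ee^*\bigr)\Bigr)\;\oplus\;\Bigl(\bigoplus_{w\in W} n_w\,L_K(E)w\Bigr),
\]
with $V,W$ finite, each $v\in V$ an infinite emitter, each $T_v$ a nonempty finite subset of $s^{-1}(v)$, and all $n_v,n_w$ positive integers. This is exactly the asserted form.

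\smallskip

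\textbf{Main obstacle.} There is no real mathematical obstacle here — the content is entirely in Theorem \ref{AG-HLMRTthm}(3), which is quoted. The only thing requiring mild attention is the indexing convention: the statement writes ``$v\in V$'' and ``$w\in W$'' as if $V,W$ were subsets of $E^0$, but a vertex might legitimately occur both as an infinite-emitter-with-$T_v$ term and as a bare vertex term, or with two different subsets $T$. One should either allow $V$ to be an abstract finite index set equipped with maps to $E^0$ and to finite subsets of $E^1$, or simply note that repetitions are harmless since we may absorb equal summands into the multiplicities $n_v,n_w$; either way the displayed isomorphism holds verbatim. I would include a one-sentence remark to this effect and otherwise present the proof as the short regrouping argument above.
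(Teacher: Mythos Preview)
Your argument correctly reduces to Theorem~\ref{AG-HLMRTthm}(3), but it does not actually establish the statement as written. The lemma asserts that $V$ is a \emph{subset of $E^0$}, so that each infinite emitter $v\in V$ carries a \emph{single} associated set $T_v$. Your Step~2 allows $V$ to be an abstract index set of pairs, and in the ``Main obstacle'' paragraph you propose either to keep it abstract (which weakens the statement) or to ``absorb equal summands into the multiplicities''. The latter does not work: if $(v,T)$ and $(v,T')$ with $T\neq T'$ both lie in $S_1$, the modules $L_K(E)(v-\sum_{e\in T}ee^*)$ and $L_K(E)(v-\sum_{e\in T'}ee^*)$ are not isomorphic in general, so they cannot be merged into a single multiplicity.

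The paper resolves this with one more substantive step. For each infinite emitter $v$ that occurs in some pair of $S_1$, set $T_v:=\bigcup_{(v,T)\in S}T$, and then invoke Theorem~\ref{AG-HLMRTthm}(2):
\[
L_K(E)\bigl(v-\textstyle\sum_{e\in T}ee^*\bigr)\;\cong\;L_K(E)\bigl(v-\textstyle\sum_{e\in T_v}ee^*\bigr)\ \oplus\ \bigoplus_{e\in T_v\setminus T}L_K(E)r(e).
\]
Replacing every such summand by the right-hand side collapses all occurrences of $v$ to the single set $T_v$, at the cost of introducing additional vertex summands $L_K(E)r(e)$ which simply enlarge $W$ and the multiplicities $n_w$. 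This is what yields $V\subseteq E^0$ with one $T_v$ per vertex, and that normalization is genuinely used downstream in Proposition~\ref{cateiso}, where one out-splits each $v\in V$ exactly once.
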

\begin{proof} Let $Q$ be a nonzero finitely generated projective left $L_K(E)$-module. By Theorem \ref{AG-HLMRTthm} (3), there exist a nonempty finite subset $S\subseteq E^0\times 2^{E^1}$ and positive integers $\{n(v, T)\}_{(v, T)\in S}$ such that
\begin{itemize}	
\item[(1)] $Q\cong \bigoplus_{(v, T)\in S} n(v, T) L_K(E)(v- \sum_{e\in T}ee^*)$;
\item[(2)] For all $(v, T)\in S$, $T$ is a finite subset of $s^{-1}(v)$;
\item[(3)] For all $(v, T)\in S$, $T$ is nonempty only if $v$ is an infinite emitter.
\end{itemize}
For any infinite emitter $v$ with $(v, T)\in S$ for some nonempty subset $T\subseteq E^1$, we denote $T_v := \bigcup_{(v, T)\in S}T\subset s^{-1}(v)$. Then, by Theorem \ref{AG-HLMRTthm} (2), for each $(v, T) \in S$ with $T\neq \emptyset$, we have that 
\begin{equation*}
L_K(E)(v- \sum_{e\in T}ee^*)\cong L_K(E)(v- \sum_{e\in T_v}ee^*) \bigoplus (\bigoplus_{e\in T_v\setminus T}L_K(E)r(e)).  \tag{\mbox{$\ast$}}
\end{equation*}
Now replace any one of the summands isomorphic to $L_K(E)(v- \sum_{e\in T}ee^*)$, where $T$ is a nonempty finite subset of $s^{-1}(v)$, which appears in the  decomposition (1)  of $Q$  by the isomorphic version of $L_K(E)(v- \sum_{e\in T}ee^*)$ given in $(\ast)$. Continuing this  process on all such other vertices $v$, we get a direct sum decomposition of $Q$ as in the statement, finishing the proof. 
\end{proof}

\begin{defn}[{\cite[Definition 2.6]{aalp:tcqflpa}}: the ``out-split" graph]\label{outsplitdef}
	Let $E = (E^0, E^1, r, s)$ be a graph and $v\in E^0$ a vertex that is not a sink. Partition $s^{-1}(v)$ into a finite number, say $n$, of disjoint nonempty subsets $\mathcal{E}_1, \mathcal{E}_2, ..., \mathcal{E}_n$.  We form the \textit{out-split graph} $E_{os} = (E^0_{os}, E^1_{os}, r_{os}, s_{os})$ from $E$ using the partition $\{\mathcal{E}_i\mid i = 1, ..., n\}$ as follows:
	$E^0_{os} = (E^0\setminus \{v\}) \cup \{v^1, v^2, ..., v^n\}$,
	
	\begin{center}
		$E^1_{os} = \{e^1, e^2, ..., e^n\mid e\in E^1, r(e) = v\} \cup \{f\mid f\in E^1\setminus r^{-1}(v)\}$,
	\end{center} 
	and define $r_{os}$, $s_{os}: E_{os}^1 \longrightarrow E_{os}^0$ by setting	$r_{os}(e^j) = v^j$,  $r_{os}(f) = r(f)$, 	and
	
	\begin{equation*}
	s_{os}(x)=  \left\{
	\begin{array}{lcl}
	s(f)&  & \text{if } x = f\notin s^{-1}(v)\\
	v^i&  & \text{if } x =f\in s^{-1}(v) \text{ and } f\in \mathcal{E}_i\\
	s(e)&  & \text{if } x = e^j \text{ and } e\notin s^{-1}(v)\\
	v^i&  & \text{if } x = e^j,\ e\in s^{-1}(v) \text{ and } e\in \mathcal{E}_i
	\end{array}%
	.
	\right.
	\end{equation*}%
\end{defn}


The following proposition can be seen as an extension of \cite[Theorem 2.8]{aalp:tcqflpa}.

\begin{prop} \label{outsplitprop}
Let $K$ be any field. Let $E$ be an arbitrary graph, $v\in E^0$ a  vertex which is not a sink, and a partition \[s^{-1}(v) = \mathcal{E}_1 \sqcup \mathcal{E}_2 \sqcup \hdots \sqcup \mathcal{E}_n\] is chosen with at most one of the $\mathcal{E}_i$ is infinite. Then $L_K(E)\cong L_K(E_{os})$ as $\mathbb{Z}$-graded $K$-algebras. This isomorphism yields an isomorphism of categories
\begin{center}
$\Phi: L_K(E)\text{-Mod} \longrightarrow L_K(E_{os})\text{-Mod}$
\end{center}
for which $\Phi(L_K(E)v) = \oplus^n_{i=1}L_K(E_{os})v^i$ and $\Phi(L_K(E)w) = L_K(E_{os})w$ for all $w\in E^0\setminus \{v\}$.
\end{prop}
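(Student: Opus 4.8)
The plan is to define an explicit $\mathbb{Z}$-graded $K$-algebra homomorphism $\Theta\colon L_K(E)\longrightarrow L_K(E_{os})$ on generators, check that it respects the Leavitt path algebra relations (1)--(4), produce an explicit inverse, and then transport the module-theoretic consequence. For a generator at a vertex $w\neq v$ set $\Theta(w)=w$; for the split vertex set $\Theta(v)=\sum_{i=1}^n v^i$ (a sum of orthogonal idempotents in $L_K(E_{os})$). For edges: if $f\notin r^{-1}(v)$ put $\Theta(f)=f$ and $\Theta(f^*)=f^*$; if $e\in r^{-1}(v)$, so $e$ gets split into $e^1,\dots,e^n$ in $E_{os}$ with $r_{os}(e^j)=v^j$, put $\Theta(e)=\sum_{j=1}^n e^j$ and $\Theta(e^*)=\sum_{j=1}^n (e^j)^*$. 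Note that the edges \emph{emitted} by $v$ are not split, but their source is redistributed among the $v^i$ according to the partition: an edge $g\in\mathcal{E}_i$ has $s_{os}(g)=v^i$, which is consistent with $\Theta(s(g))=\Theta(v)=\sum v^i$ since $s_{os}(g)\,g = g$ forces only the $v^i$-component to survive.

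\textbf{Key steps in order.} First I would verify that $\Theta$ is well defined, i.e.\ that the images of the generators satisfy relations (1)--(4) of $L_K(E)$; this is a finite bookkeeping check, the only subtle points being relation (4) at $v$ (where one must use that $s_{os}^{-1}(v^i)$ is precisely $\mathcal{E}_i$ together with the appropriate $e^i$'s, and sum over $i$), and relation (4) at $r(e)$ for $e\in r^{-1}(v)$ when $r(e)$ happens to be regular --- here one uses that $\{e^j : e\in r^{-1}(r(e))\}$ replaces $\{e\}$ in $s_{os}^{-1}$, but actually $r(e)=v$ so this is again the $v^i$ computation. One must also be careful that the hypothesis ``at most one $\mathcal{E}_i$ is infinite'' guarantees that each $v^i$ for which $\mathcal{E}_i$ is finite is a regular vertex (or a sink if that $\mathcal{E}_i$-part is empty, which cannot happen since the $\mathcal{E}_i$ are nonempty) so that relation (4) at $v^i$ is imposed and matched; the single possibly-infinite $\mathcal{E}_i$ gives an infinite emitter $v^i$ with no (4)-relation, matching the fact that we only need $\Theta$ to be a homomorphism. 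Grading is preserved because each generator maps to a sum of generators of the same degree. Second, I would define $\Psi\colon L_K(E_{os})\to L_K(E)$ by $\Psi(w)=w$, $\Psi(v^i)=\sum_{g\in\mathcal{E}_i} gg^*$ when $\mathcal{E}_i$ is finite and $\Psi(v^i)=v-\sum_{j\neq i}\sum_{g\in\mathcal{E}_j}gg^*$ for the at-most-one infinite part (so that $\sum_i\Psi(v^i)=v$), $\Psi(f)=f$, $\Psi(f^*)=f^*$ for unsplit edges, and $\Psi(e^j)=gg^*\cdot e$-type expressions --- more precisely $\Psi(e^j)=$ (the appropriate orthogonal idempotent $p_j$ at $v$) composed so that $\Psi(e^j)$ has source $v$ and range $r(e)$; since $r_{os}(e^j)=v^j$ and $s_{os}(e^j)=s(e)$ we need $\Psi(e^j)=s(e)\cdot(\text{something})\cdot v$, and the natural choice uses the Cuntz--Krieger relation at $v$ to split $e=\sum_j e\,p_j$ where $p_j$ corresponds to $v^j$; one verifies relations and that $\Psi\circ\Theta=\mathrm{id}$, $\Theta\circ\Psi=\mathrm{id}$ on generators. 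Third, an isomorphism $L_K(E)\cong L_K(E_{os})$ of rings induces an isomorphism of module categories $\Phi$, and one reads off $\Phi(L_K(E)v)=L_K(E_{os})\Theta(v)=L_K(E_{os})(\sum_i v^i)\cong\bigoplus_{i=1}^n L_K(E_{os})v^i$ (the $v^i$ being orthogonal), and $\Phi(L_K(E)w)=L_K(E_{os})w$ for $w\neq v$.

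\textbf{Main obstacle.} The delicate part is pinning down $\Psi$ on the split edges $e^j$ (for $e\in r^{-1}(v)$) and the idempotent decomposition of $v$ in $L_K(E)$ corresponding to the vertices $v^1,\dots,v^n$ of $E_{os}$, and then checking that $\Psi$ is actually a homomorphism and a two-sided inverse of $\Theta$ --- in particular relation (3), $(e^i)^*e^j=\delta_{ij}v^j$, must come out right after applying $\Psi$, which forces the $\Psi(v^j)$ to be genuinely orthogonal idempotents summing to $v$, and this is exactly where the ``at most one infinite $\mathcal{E}_i$'' hypothesis is used (so that all but one $\Psi(v^j)$ is the honest finite sum $\sum_{g\in\mathcal{E}_j}gg^*$ and these are orthogonal and bounded by $v$, with the leftover absorbing the rest). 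Everything else is routine verification once the two maps are written down, so I would organize the writeup around: (i) definition of $\Theta$; (ii) $\Theta$ well defined and graded; (iii) definition of $\Psi$ and mutual inverseness; (iv) the categorical corollary and the computation of $\Phi$ on the distinguished projectives. I expect (iii) to consume most of the proof length.
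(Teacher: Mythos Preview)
Your approach is correct and will work, but it differs from the paper's in one substantive respect. Both the paper and you define the same map $\Theta$ (the paper calls it $\pi$, sending $v\mapsto\sum_i v^i$, $e\mapsto\sum_j e^j$ for $e\in r^{-1}(v)$, etc.) and verify it is a well-defined graded homomorphism. The divergence is in how bijectivity is established.

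The paper does \emph{not} construct an inverse. Instead, it invokes the Graded Uniqueness Theorem \cite[Theorem~4.8]{tomf:utaisflpa} to get injectivity for free (since $\pi$ is graded and kills no vertex), and then checks surjectivity directly by producing each generator of $L_K(E_{os})$ in the image: for $j$ with $\mathcal{E}_j$ finite, $v^j$ is regular in $E_{os}$, so $v^j=\sum_{f\in s_{os}^{-1}(v^j)}ff^*=\pi(\sum_{g\in\mathcal{E}_j}gg^*)$, and the remaining (at most one) $v^i$ is recovered as $\pi(v)-\sum_{j\neq i}v^j$; then $e^j=\pi(e)\,v^j$. This is exactly your $\Psi$ in disguise, but the paper never needs to verify that $\Psi$ respects the $E_{os}$-relations, because it never asserts $\Psi$ is a homomorphism.

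Your route---constructing $\Psi$ explicitly with $\Psi(v^j)=\sum_{g\in\mathcal{E}_j}gg^*$ (finite parts), $\Psi(v^i)=v-\sum_{j\neq i}\Psi(v^j)$ (the infinite part), $\Psi(e^j)=e\,\Psi(v^j)$---and checking it is a two-sided inverse is perfectly valid and more self-contained: it avoids appealing to the Graded Uniqueness Theorem. The cost is the longer relation-checking you anticipate in step~(iii), particularly relation~(4) at the regular $v^j$'s and relation~(3) for the split edges; you have correctly identified that orthogonality of the idempotents $\Psi(v^j)$ (hence the ``at most one infinite $\mathcal{E}_i$'' hypothesis) is exactly what makes this go through. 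The paper's route is shorter but leans on an external uniqueness theorem; yours is longer but elementary.
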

\begin{proof} The quoted result \cite[Theorem 2.8]{aalp:tcqflpa} applies to constructions more general than the out-split construction of row-finite graphs. Accordingly, based on the proof of \cite[Theorem 3.2]{bp:feoga}, we provide here a short proof of Proposition \ref{outsplitprop}.
	
We define the  elements $\{Q_u \ | \ u\in E^0\}$ and $\{T_e, T_{e^*} \ | \ e\in E^1\}$ of $L_K(E_{os})$  by setting 
\begin{equation*}
Q_u=  \left\{
\begin{array}{lcl}
\sum^n_{i=1}v^i&  & \text{if } u = v , \\
u&  & \text{otherwise \ \ ,}%
\end{array}%
\right.
\end{equation*}%
\medskip
\begin{equation*}
T_e=  \left\{
\begin{array}{lcl}
\sum_{i=1}^n e^i&  & \text{if } e\in r^{-1}(v)\\
e&  & \text{otherwise}  \ \ , \ \ \ \ \   
\end{array}%
\right.
\end{equation*}%
\medskip
and 
\begin{equation*}
T_{e^*}=  \left\{
\begin{array}{lcl}
\sum_{i=1}^n (e^i)^*&  & \text{if } e\in r^{-1}(v)\\
e^*&  & \text{otherwise}  \ \ .
\end{array}%
\right.
\end{equation*}%
\medskip

\noindent
By repeating verbatim the corresponding argument in
the proof of \cite[Theorem 2.8]{aalp:tcqflpa}, we get that $\{Q_u, T_e, T_{e^*}\mid u\in E^0, e\in E^1\}$ is a family in $L_K(E_{os})$ satisfying the same relations as $\{u, e, e^*\mid u\in E^0, e\in E^1\}$. Then, by the Universal Homomorphism Property of $L_K(E)$, there exists a $K$-algebra homomorphism $\pi: L_K(E)\longrightarrow L_K(E_{os})$, which maps $u\longmapsto Q_u$, $e\longmapsto T_e$ and $e^*\longmapsto T_{e^*}$.
Since $Q_u$ has degree $0$, $T_e$ has degree $1$, and $T_{e^*}$ has degree $-1$ for all $u\in E^0$ and $e\in E^1$, $\pi$ is thus a $\mathbb{Z}$-graded homomorphism, whence the injectivity of $\pi$ is guaranteed by \cite[Theorem 4.8]{tomf:utaisflpa}. To prove that $\pi$ is surjective, we show that the generators of $L_K(E_{os})$ lie in $Im(\pi)$. If $w \in E_{os}^0\setminus \{v^1, v^2, ..., v^n\}$, then $w = \pi(w) \in Im(\pi)$. Since $\{\mathcal{E}_i\}_{i=1}^n$ are chosen with at most one of the $\mathcal{E}_i$  infinite, there is a unique number $i$ such that $v^i$ is an infinite emitter. Without loss of generality we may assume that $i = 1$, and so $v^j$ is a regular vertex for all $2\leq j\leq n$. For each $2\leq j\leq n$, we have 
$$v^j = \sum_{f\in s_{os}^{-1}(v^j)} ff^* = \sum_{e\in \mathcal{E}_j}T_eT_{e^*} = \sum_{e\in \mathcal{E}_j}\pi(ee^*)\in Im(\pi)$$ and $$v^1 = Q_v - \sum_{j=2}^nv^j = \pi(v) - \sum_{j=2}^nv^j \in Im(\pi).$$

If $f \in  E_{os}^1\setminus \{e^1, e^2, ..., e^n\mid e\in E^1, r(e) = v\}$, then $f = \pi(f) \in Im(\pi)$. For each $e\in r^{-1}(v)$, we have $\pi(e) = T_e = \sum_{i=1}^n e^i$, so $e^i = \pi(e)v^i \in Im(\pi)$ for all $i = 1, ..., n$. Therefore, $\pi$ is surjective, so it is an isomorphism. Moreover, this  isomorphism maps $w$ to $\sum_{i=1}^nv^i$ if $w=v$, and to $w$ otherwise, so that the associated isomorphism of categories
restricts to the desired map, finishing the proof.
\end{proof}

Consequently, we get the following useful result.
\begin{cor}\label{outsplitcor}
Let $K$ be a field, $E$ an arbitrary graph, $v$ an infinite emitter and $T_v$ a nonempty finite subset of $s^{-1}(v)$. Put $\mathcal{E}_1 = T_v$ and $\mathcal{E}_2 = s^{-1}(v)\setminus T_v$. Let $F$ be the graph obtained by out-splitting the vertex $v$ into
the vertices $v^1, v^2$ according to  the partition $\mathcal{E}_1, \mathcal{E}_2$. Then $L_K(E)\cong L_K(F)$ as $\mathbb{Z}$-graded $K$-algebras. This isomorphism yields an isomorphism of categories
\begin{center}
$\Phi_v: L_K(E){\rm-Mod} \longrightarrow L_K(F){\rm-Mod}$
\end{center}
with the following properties:
\begin{itemize}	
\item[(1)] $\Phi_v(L_K(E)v) = L_K(F)v^1\oplus L_K(F)v^2$ and $\Phi_v(L_K(E)w) = L_K(F)w$ for all $w\in E^0\setminus \{v\}$,
\item[(2)] For all $w\in E^0\setminus \{v\}$ and all finite subsets $W\subseteq s^{-1}(w)$ with $v\notin r_E(W)$,
$$\Phi_v(L_K(E)(w - \sum_{e\in W}ee^*)) = L_K(F)(w - \sum_{e\in W}ee^*),$$
\item[(3)] For all $w\in E^0\setminus \{v\}$ and any finite subset $W\subseteq s^{-1}(w)$ with $v\in r_E(W)$, there
exists a finite subset $W'\subseteq s_F^{-1}(w)$ such that
$$\Phi_v(L_K(E)(w - \sum_{e\in W}ee^*)) = L_K(F)(w - \sum_{e\in W'}ee^*),$$
\item[(4)] $\Phi_v(L_K(E)(v - \sum_{e\in T_v}ee^*)) = L_K(F)v^2$.
\end{itemize}
\end{cor}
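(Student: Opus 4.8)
The plan is to deduce the statement from Proposition \ref{outsplitprop} and then track a few idempotents explicitly. First note that the hypotheses of Proposition \ref{outsplitprop} are met with $n=2$: since $v$ is an infinite emitter and $T_v$ is finite, $\mathcal{E}_1=T_v$ is finite and nonempty while $\mathcal{E}_2=s^{-1}(v)\setminus T_v$ is infinite and nonempty, so at most one block is infinite and $F$ is indeed the out-split graph $E_{os}$ for this partition. Proposition \ref{outsplitprop} then supplies the graded isomorphism $L_K(E)\cong L_K(F)$ and the induced isomorphism of categories $\Phi_v$, and property $(1)$ is exactly its last assertion for $n=2$. For $(2)$--$(4)$ I would recall from the proof of Proposition \ref{outsplitprop} the explicit isomorphism $\pi\colon L_K(E)\to L_K(F)$: it fixes every vertex $u\ne v$, sends $v$ to $v^1+v^2$, fixes $e$ and $e^*$ whenever $r(e)\ne v$, and sends $e\mapsto e^1+e^2$, $e^*\mapsto (e^1)^*+(e^2)^*$ whenever $r(e)=v$. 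Since $\Phi_v$ is the category isomorphism induced by $\pi$, it carries $L_K(E)\epsilon$ to $L_K(F)\pi(\epsilon)$ for every idempotent $\epsilon$, so everything reduces to evaluating $\pi$ on the idempotents $w-\sum_{e\in W}ee^*$ and $v-\sum_{e\in T_v}ee^*$.

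The single local computation needed is the value of $\pi(ee^*)$ for an edge $e$ emanating from $w$ or from $v$. If $r(e)\ne v$ then $\pi(ee^*)=ee^*$. If $r(e)=v$ then $\pi(ee^*)=(e^1+e^2)\big((e^1)^*+(e^2)^*\big)=e^1(e^1)^*+e^2(e^2)^*$, since the cross terms vanish: $e^1(e^2)^*=e^1 r_{os}(e^1)r_{os}(e^2)(e^2)^*=e^1 v^1 v^2 (e^2)^*=0$ because $v^1\ne v^2$, and symmetrically $e^2(e^1)^*=0$. With this in hand, $(2)$ is immediate: when $v\notin r_E(W)$ every $e\in W$ has $r(e)\ne v$, and $w\ne v$, so $\pi$ fixes $w-\sum_{e\in W}ee^*$. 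For $(3)$, split $W$ according to whether $r(e)=v$, apply the computation termwise, and set $W':=\{e\in W: r(e)\ne v\}\cup\{e^1,e^2: e\in W,\ r(e)=v\}$; then $W'$ is a finite subset of $s_F^{-1}(w)$ (note $w\ne v$ forces $s_{os}(e^j)=s(e)=w$), and $\pi\big(w-\sum_{e\in W}ee^*\big)=w-\sum_{e\in W'}ee^*$, which is $(3)$.

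For $(4)$ the crucial observation is that $v^1$ is a \emph{regular} vertex of $F$. Indeed, from the definition of the out-split graph, $s_F^{-1}(v^1)=\{e\in T_v: r(e)\ne v\}\cup\{e^1,e^2: e\in T_v,\ r(e)=v\}$, which is finite and nonempty because $\emptyset\ne T_v$ is finite. Hence the Cuntz--Krieger relation at $v^1$ holds in $L_K(F)$: $v^1=\sum_{x\in s_F^{-1}(v^1)}xx^*$. Comparing this sum termwise with the local computation above gives $\sum_{x\in s_F^{-1}(v^1)}xx^*=\sum_{e\in T_v}\pi(ee^*)$, whence $\pi\big(v-\sum_{e\in T_v}ee^*\big)=\pi(v)-\sum_{e\in T_v}\pi(ee^*)=(v^1+v^2)-v^1=v^2$, i.e. $\Phi_v\big(L_K(E)(v-\sum_{e\in T_v}ee^*)\big)=L_K(F)v^2$.

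I expect the only real care to be the bookkeeping of the source fibers $s_F^{-1}(v^1)$ and $s_F^{-1}(w)$ in the out-split graph, and in particular the treatment of loops at $v$, which are split into a pair $e^1,e^2$ each contributing a summand; it is precisely this splitting that makes $v^1$ regular and thereby forces $(4)$ to land on $L_K(F)v^2$ rather than on a larger projective module.
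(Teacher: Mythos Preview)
Your proof is correct and follows essentially the same approach as the paper's: both invoke the explicit isomorphism $\pi$ from Proposition~\ref{outsplitprop}, compute $\pi(ee^*)=e^1(e^1)^*+e^2(e^2)^*$ for edges $e$ with $r(e)=v$ by killing the cross terms via $v^1v^2=0$, and then for part~(4) use that $v^1$ is regular in $F$ so the Cuntz--Krieger relation collapses $\pi(v-\sum_{e\in T_v}ee^*)$ to $v^2$. The only difference is organizational: you isolate the $\pi(ee^*)$ computation once upfront, whereas the paper repeats it inside the proofs of (3) and (4) separately.
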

\begin{proof}
By Proposition \ref{outsplitprop}, the map $\pi_v: L_K(E)\longrightarrow L_K(F)$, defined by 
\begin{equation*}
\pi_v(u)=  \left\{
\begin{array}{lcl}
v^1 + v^2&  & \text{if } u = v , \\
u&  & \text{otherwise \ \ ,}%
\end{array}%
\right.
\end{equation*}%
\medskip
\begin{equation*}
\pi_v(e)=  \left\{
\begin{array}{lcl}
e^1 + e^2&  & \text{if } e\in r^{-1}(v)\\
e&  & \text{otherwise}  \ \ , \ \ \ \ \   
\end{array}%
\right.
\end{equation*}%
\medskip
and 
\begin{equation*}
\pi_v(e^*)=  \left\{
\begin{array}{lcl}
(e^1)^* + (e^2)^*&  & \text{if } e\in r^{-1}(v)\\
e^*&  & \text{otherwise }, 
\end{array}%
\right.
\end{equation*}%
extends to an isomorphism of $\mathbb{Z}$-graded $K$-algebras, and this isomorphism yields an isomorphism of categories
\begin{center}
	$\Phi_v: L_K(E)\text{-Mod} \longrightarrow L_K(F)\text{-Mod}$
\end{center}
with property (1) of the statement.

(2) Let $w\in E^0\setminus \{v\}$ and $W$ a finite subset of $s^{-1}(w)$ with $v\notin r_E(W)$. We then have that $\pi_v(w - \sum_{e\in W}ee^*) = w - \sum_{e\in W}ee^*$, so $\Phi_v(L_K(E)(w - \sum_{e\in W}ee^*)) = L_K(F)(w - \sum_{e\in W}ee^*)$.

(3) Let $w\in E^0\setminus \{v\}$ and $W$ a finite subset of $s^{-1}(w)$ with $v\in r_E(W)$. Put $H = \{e\in W\mid r(e) = v\}$. We then for $e\in H$ have that $e^1(e^2)^* = (e^1v^1)(v^2(e^2)^*) = e^1 (v^1v^2)(e^2)^* = 0$ and $e^2(e^1)^* = (e^2v^2) (v^1 ((e^1)^*)) = e^2(v^2v^1)(e^1)^* = 0$, and
\begin{equation*}
\begin{array}{rcl}
\pi_v(w - \sum_{e\in W}ee^*) &=& w - \sum_{e\in H}(e^1 + e^2)((e^1)^* + (e^2)^*) - \sum_{e\in W\setminus H}ee^*\\
&=& w - \sum_{e\in H}(e^1(e^1)^* + e^2(e^2)^*) - \sum_{e\in W\setminus H}ee^*\\
&=& w - \sum_{e\in W'}ee^*,
\end{array}
\end{equation*}
where $W' = (W\setminus H)\, \sqcup\, \{e^1, e^2\mid e\in H\}\subseteq s_F^{-1}(w)$. This implies that $\Phi_v(L_K(E)(w - \sum_{e\in W}ee^*)) = L_K(F)(w - \sum_{e\in W'}ee^*)$.

(4) Let $H = \{e\in T_v\mid r(e) = v\}$. Then, similar to item (3) we have that $e^1(e^2)^* = 0 = e^2(e^1)^*$ and
$\pi_v(v - \sum_{e\in T_v}ee^*) = v^1 + v^2 - \sum_{e\in H}(e^1 + e^2)((e^1)^* + (e^2)^*) - \sum_{e\in T_v\setminus H}ee^* = v^1 + v^2 - \sum_{e\in H}(e^1(e^1)^* + e^2(e^2)^*) - \sum_{e\in T_v\setminus H}ee^*$. 

On the other hand, we have that $s_F^{-1}(v^1) = (T_v\setminus H) \sqcup \{e^1, e^2\mid e\in H\}$, so $v^1 = \sum_{e\in H}(e^1(e^1)^* + e^2(e^2)^*) + \sum_{e\in T_v\setminus H}ee^*$ in $L_K(F)$. This implies that $\pi_v(v - \sum_{e\in T_v}ee^*) = v^1 + v^2 - \sum_{e\in H}(e^1(e^1)^* + e^2(e^2)^*) - \sum_{e\in T_v\setminus H}ee^* = v^2$, and hence $\Phi_v(L_K(E)(v - \sum_{e\in T_v}ee^*)) = L_K(F)v^2$, finishing the proof.
\end{proof}	

Using Lemma \ref{projmod} and Corollary \ref{outsplitcor} we get the following result which plays an important role in the proof of the main theorem below.

\begin{prop}\label{cateiso}
Let $K$ be a field, $E$ an arbitrary graph and $Q$ a nonzero finitely generated projective left $L_K(E)$-module. 	Then there exists a graph $F$ with the following properties.
\begin{itemize}	
\item[(1)] $F$ is obtained from $E$ in some step-by-step process of out-splittings.  

\item[(2)] There exists an isomorphism of categories
\begin{center}
	$\Phi: L_K(E){\rm -Mod} \longrightarrow L_K(F){\rm -Mod}$
\end{center}
such that $\Phi(Q) \cong \bigoplus_{v\in H}n_v L_K(F)v$ for some finite subset $H\subseteq F^0$ and some positive integers $\{n_v\}_{v\in H}$.
\end{itemize}
\end{prop}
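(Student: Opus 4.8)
The plan is to combine Lemma \ref{projmod} with a finite sequence of applications of Corollary \ref{outsplitcor}, one for each infinite emitter appearing in the decomposition of $Q$. First I would invoke Lemma \ref{projmod} to write
$$Q\cong \left(\bigoplus_{v\in V} n_v L_K(E)(v - \sum_{e\in T_v}ee^*)\right) \oplus \left(\bigoplus_{w\in W} n_w L_K(E)w\right),$$
where $V$ is a finite set of infinite emitters, each $T_v$ is a nonempty finite subset of $s^{-1}(v)$, $W$ is a finite subset of $E^0$, and all $n_v,n_w$ are positive integers. The terms of the second type are already cyclic projectives of the desired shape; the work is to eliminate the terms of the first type, $L_K(E)(v-\sum_{e\in T_v}ee^*)$, one infinite emitter $v$ at a time.

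Next I would enumerate $V = \{v_1,\dots,v_m\}$ and argue by induction on $m$. For the vertex $v_1$, apply Corollary \ref{outsplitcor} with $\mathcal{E}_1 = T_{v_1}$ and $\mathcal{E}_2 = s^{-1}(v_1)\setminus T_{v_1}$: this produces a graph $F_1$ obtained from $E$ by a single out-splitting, a $\mathbb{Z}$-graded isomorphism $L_K(E)\cong L_K(F_1)$, and an isomorphism of categories $\Phi_{v_1}$. By part (4) of Corollary \ref{outsplitcor}, $\Phi_{v_1}(L_K(E)(v_1-\sum_{e\in T_{v_1}}ee^*)) = L_K(F_1)v_1^2$, a cyclic projective. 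By parts (1), (2), (3), each remaining summand $L_K(E)w$ (for $w\in W$) is sent to $L_K(F_1)w$ (with $w$ possibly one of $v_1^1$ if needed, but in fact $w\neq v_1$ so it is unchanged), and each remaining summand $L_K(E)(v_j-\sum_{e\in T_{v_j}}ee^*)$ for $j\geq 2$ is sent to $L_K(F_1)(v_j - \sum_{e\in T'_{v_j}}ee^*)$ for some nonempty finite $T'_{v_j}\subseteq s_{F_1}^{-1}(v_j)$ — here I use that $v_j\neq v_1$, so parts (2) or (3) apply depending on whether $v_1\in r_E(T_{v_j})$, and in either case we again get a term of the same form indexed by the still-infinite-emitter $v_j$ in $F_1$ (out-splitting $v_1$ does not change whether $v_j$ is an infinite emitter, nor does it affect the relevant out-edges of $v_j$). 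Since $\Phi_{v_1}$ is additive, $\Phi_{v_1}(Q)$ is again a finite direct sum of copies of modules $L_K(F_1)u$ and $L_K(F_1)(v_j-\sum_{e\in T'_{v_j}}ee^*)$ with $j\geq 2$, i.e. the number of "bad" infinite-emitter terms has dropped by one. Repeating with $v_2,\dots,v_m$ and composing the resulting categorical isomorphisms yields a graph $F := F_m$ obtained from $E$ by a step-by-step sequence of out-splittings, together with an isomorphism of categories $\Phi := \Phi_{v_m}\circ\cdots\circ\Phi_{v_1}$ (after transporting each $v_j$ to its image in the successive graphs) such that $\Phi(Q)\cong\bigoplus_{v\in H}n_v L_K(F)v$ for a finite $H\subseteq F^0$ and positive integers $n_v$, which is precisely (1) and (2).

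The main obstacle, and the point requiring care, is bookkeeping: the vertices $v_2,\dots,v_m$ and the sets $T_{v_j}$ must be correctly tracked through each out-splitting, since an out-splitting of $v_i$ can alter the description of a later term $L_K(E)(v_j-\sum_{e\in T_{v_j}}ee^*)$ via part (3) of Corollary \ref{outsplitcor} whenever $v_i\in r_E(T_{v_j})$. I would handle this by checking that (a) out-splitting $v_i$ leaves $v_j$ (for $j>i$) an infinite emitter with the same out-edges (only range data near $v_i$ changes), so each successive application of Corollary \ref{outsplitcor} is legitimate, and (b) part (3) guarantees the transformed summand is again of the form $L_K(F_i)(v_j - \sum_{e\in W'}ee^*)$ with $W'$ a nonempty finite subset of $s_{F_i}^{-1}(v_j)$ — nonemptiness being inherited because the bijection $W\mapsto W'$ in part (3) preserves nonemptiness. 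Once this is in place the induction runs cleanly and the composite category isomorphism does the rest.
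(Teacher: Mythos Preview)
Your proposal is correct and follows essentially the same inductive approach as the paper: invoke Lemma~\ref{projmod}, then out-split one infinite emitter at a time via Corollary~\ref{outsplitcor}, using part~(4) to eliminate the offending summand and parts~(1)--(3) to track the rest. One small correction to your bookkeeping: nothing in Lemma~\ref{projmod} forces $V$ and $W$ to be disjoint, so your assertion ``in fact $w\neq v_1$'' is unjustified---the paper handles the case $v_1\in W$ explicitly by applying part~(1), which sends $L_K(E)v_1$ to $L_K(F_1)v_1^1\oplus L_K(F_1)v_1^2$, still a sum of cyclics of the right form.
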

\begin{proof} By Lemma \ref{projmod}, we have that
$$Q\cong (\bigoplus_{v\in V} n_v L_K(E)(v - \sum_{e\in T_v}ee^*) ) \oplus (\bigoplus_{w\in W} m_w L_K(E)w),$$ where $V$ and $W$ are some finite subsets of $E^0$, each $v\in V$ is an infinite emitter, each $T_v$ is some nonempty finite subset of $s^{-1}(v)$, and  $n_v, m_w$ are some positive integers.	Write $V = \{v_1, v_2, \hdots, v_n\}$. Let $E_1$ be the graph obtained from $E$ by out-splitting the vertex $v_1$ into the vertices $v_1^1, v_1^2$ according to the partition $\mathcal{E}_1 = T_{v_1}$, $\mathcal{E}_2 = s^{-1}(v_1)\setminus T_{v_1}$. By Corollary \ref{outsplitcor}, there exists an isomorphism of categories
\begin{center}
$\Phi_{v_1}: L_K(E){\rm-Mod} \longrightarrow L_K(E_1){\rm-Mod}$
\end{center}
with the following properties:
\begin{itemize}	
\item[(i)] $\Phi_{v_1}(L_K(E)v_1) = L_K(E_1 )v_1^1\oplus L_K(E_1)v_1^2$ and $\Phi_{v_1}(L_K(E)w) = L_K(E_1)w$ for all $w\in E^0\setminus \{v_1\}$,
\item[(ii)] for all $w\in E^0\setminus \{v_1\}$ and all finite subsets $W\subseteq s^{-1}(w)$ with $v_1\notin r_E(W)$,
$$\Phi_{v_1}(L_K(E)(w - \sum_{e\in W}ee^*)) = L_K(E_1)(w - \sum_{e\in W}ee^*),$$
\item[(iii)] for all $w\in E^0\setminus \{v_1\}$ and any finite subset $W\subseteq s^{-1}(w)$ with $v\in r_E(W)$, there
exists a finite subset $W'\subseteq s_{E_1}^{-1}(w)$ such that
$$\Phi_{v_1}(L_K(E)(w - \sum_{e\in W}ee^*)) = L_K(E_1)(w - \sum_{e\in W'}ee^*), \ \mbox{and}$$
\item[(iv)] $\Phi_{v_1}(L_K(E)(v_1 - \sum_{e\in T_{v_1}}ee^*)) = L_K(E_1)v_1^2$.
\end{itemize}
We then have that $\Phi_{v_1}(Q)$ is isomorphic to 
$$(\bigoplus^n_{i=2}n_{v_i} L_K(E_1)(v_i - \sum_{e\in T'_{v_i}}ee^*))\oplus n_{v_1}L_K(E_1)v_1^2 \oplus (\bigoplus_{w\in W''} m''_w L_K(E_1)w)$$ as left $L_K(E_1)$-modules, where $T'_{v_i}$ is some nonempty finite subset of $s^{-1}_{E_1}(v_i)$, and $W'', m''_w$ are defined by setting $$W'' = W \mbox{ if } v_1\notin W $$
\noindent
 (in this case we have that $ m''_w = m_w$  for all $ w\in W$), and 
$$W'' = \{v^1_1, v^2_1\} \cup W\setminus\{v_1\} \mbox{ otherwise }$$
\noindent
 (in this case we have that $ m''_w = m_w  $ for all $ w\in W\setminus\{v_1\} $ and $m''_{v_1^i} = m_{v_1}).$ 

 Let $W' := W'' \cup \{v_1^2\}$, $m'_w := m''_w$ for all $w\in W'\setminus\{v^2_1\}$, and $m'_{v^2_1} := n_{v_1} + m''_{v_1^i}$ (note that $m''_{v_1^i} = 0$ if  $v_1\notin W$). We then get that
$$\Phi_{v_1}(Q)\cong (\bigoplus^n_{i=2}n_{v_i} L_K(E_1)(v_i - \sum_{e\in T'_{v_i}}ee^*))\oplus (\bigoplus_{w\in W'} m'_w L_K(E_1)w)$$ as left $L_K(E_1)$-modules.

We repeat the process described above, starting with the graph $E_2$ which is obtained from $E_1$ by out-splitting $v_2$ into $v_2^1$ and $v_2^2$ with respect to  $\mathcal{E}_1 = T'_{v_2}$, $\mathcal{E}_2 = s^{-1}_{E_1}(v_2)\setminus T'_{v_2}$. We see that after $n$ steps we arrive at the  graph $F$ of the statement and an isomorphism of categories 	$\Phi: L_K(E){\rm -Mod} \longrightarrow L_K(F){\rm -Mod}$ such that $\Phi(Q) \cong \bigoplus_{v\in H}n_v L_K(F)v$ for some finite subset $H\subseteq F^0$ and some positive integers $\{n_v\}_{v\in H}$, finishing the proof.
\end{proof}

We are now in position to achieve the  main result of this section.   

\begin{thm}\label{End(Q)general}
Let $K$ be any field, $E$ an arbitrary graph, and $Q$ a nonzero finitely generated projective left $L_K(E)$-module.   Then ${\rm End}_{L_K(E)}(Q)$ is isomorphic to a Steinberg  algebra.  
\end{thm}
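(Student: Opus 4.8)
The plan is to combine the structural reduction of Proposition~\ref{cateiso} with the corner/groupoid machinery of Theorem~\ref{gecorthem}. First I would apply Proposition~\ref{cateiso} to the given module $Q$: there is a graph $F$ obtained from $E$ by a finite sequence of out-splittings, together with an isomorphism of categories $\Phi: L_K(E)\text{-Mod}\longrightarrow L_K(F)\text{-Mod}$ such that $\Phi(Q)\cong\bigoplus_{v\in H}n_vL_K(F)v$ for a finite subset $H\subseteq F^0$ and positive integers $\{n_v\}_{v\in H}$. Since $\Phi$ is an isomorphism of categories it preserves endomorphism rings, so ${\rm End}_{L_K(E)}(Q)\cong{\rm End}_{L_K(F)}(\Phi(Q))$, and it suffices to identify the latter with a Steinberg algebra.

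Next I would unpack the endomorphism ring of the finite direct sum $P:=\bigoplus_{v\in H}n_vL_K(F)v$. Writing $P$ as an external direct sum of $N:=\sum_{v\in H}n_v$ cyclic modules $L_K(F)v$ (with $v$ repeated $n_v$ times), Proposition~\ref{End}(2) gives that ${\rm End}_{L_K(F)}(P)$ is isomorphic to the ring of $N\times N$ matrices whose $(i,j)$ entry lies in $v_iL_K(F)v_j$, where $v_1,\dots,v_N$ is the list of vertices (with multiplicity). The point is that this matrix ring is itself a corner-like algebra of a Leavitt path algebra over a slightly enlarged graph: introduce a graph $\widetilde F$ obtained from $F$ by adjoining, for each $v\in H$, $n_v$ new ``head'' vertices each emitting a single edge into $v$ (so that, by Remark~\ref{oneedge&cor}(1), each such new vertex $u$ satisfies $ff^*=u$ and $L_K(\widetilde F)u\cong L_K(\widetilde F)v$). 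Then with $\widetilde H$ the (finite) set of these new head vertices, $(\sum_{u\in\widetilde H}u)L_K(\widetilde F)(\sum_{u\in\widetilde H}u)$ is exactly the matrix ring above: the edge $f_u$ from a head vertex $u$ to $v$ implements the identification ${\rm Hom}(L_K(\widetilde F)u,L_K(\widetilde F)v)$, and $u_iL_K(\widetilde F)u_j = f_{u_i}(v_iL_K(\widetilde F)v_j)f_{u_j}^*$. Hence ${\rm End}_{L_K(F)}(P)\cong(\sum_{u\in\widetilde H}u)L_K(\widetilde F)(\sum_{u\in\widetilde H}u)$.

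Finally I would invoke Theorem~\ref{gecorthem}(3): since $\widetilde H$ is a finite nonempty subset of $\widetilde F^0$, the corner $(\sum_{u\in\widetilde H}u)L_K(\widetilde F)(\sum_{u\in\widetilde H}u)$ is isomorphic to the Steinberg algebra $A_K(\mathcal{G}_{\widetilde F}|_{\widetilde H})$ of the ample groupoid $\mathcal{G}_{\widetilde F}|_{\widetilde H}$. Chaining the isomorphisms,
\[
{\rm End}_{L_K(E)}(Q)\cong{\rm End}_{L_K(F)}\big(\textstyle\bigoplus_{v\in H}n_vL_K(F)v\big)\cong\Big(\sum_{u\in\widetilde H}u\Big)L_K(\widetilde F)\Big(\sum_{u\in\widetilde H}u\Big)\cong A_K(\mathcal{G}_{\widetilde F}|_{\widetilde H}),
\]
which is the desired conclusion, and moreover the groupoid is explicitly described.

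The step I expect to be the main obstacle is the middle one: verifying carefully that the $N\times N$ matrix ring over the ``bimodule entries'' $v_iL_K(F)v_j$ really is realized, on the nose and as $K$-algebras, as the vertex corner of the enlarged graph $\widetilde F$ --- i.e. that adjoining single-edge source vertices both creates the requisite copies of $L_K(F)v$ (so the Morita/matrix bookkeeping is correct) and does not disturb the rest of $L_K(F)$, and that the multiplication in the corner matches matrix multiplication under $f_u, f_u^*$. One must also check that $\widetilde F$ is a legitimate graph for which Theorem~\ref{gecorthem} applies (it does, since that result is stated for arbitrary graphs and arbitrary finite $H$), and keep track of the fact that the new vertices are regular of out-degree one so the Cuntz--Krieger relation at them is exactly $u=f_uf_u^*$. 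Everything else is bookkeeping with the functorial isomorphisms already established.
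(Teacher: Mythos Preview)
Your proposal is correct and follows essentially the same strategy as the paper's proof: apply Proposition~\ref{cateiso} to reduce to a direct sum $\bigoplus_{v\in H}n_vL_K(F)v$, enlarge the graph so that the $N=\sum_v n_v$ cyclic summands correspond to $N$ \emph{distinct} vertices, identify the matrix description of the endomorphism ring from Proposition~\ref{End} with the corresponding vertex corner of the enlarged Leavitt path algebra, and finish with Theorem~\ref{gecorthem}(3). The only difference is cosmetic: the paper attaches to each $v_i$ a single chain (``head'') of length $n_i-1$ and takes $H$ to be $T$ together with the new chain vertices, whereas you attach $n_v$ parallel single-edge sources to each $v$ and take $\widetilde H$ to consist solely of the new source vertices; both constructions produce $N$ distinct vertices and the verification that the corner realizes the matrix ring (via $x\mapsto f_{u_i}xf_{u_j}^*$, using $f_u^*f_u=v$ and $f_uf_u^*=u$) is the same computation in either case, including the check that $F$ is a CK-subgraph of the enlarged graph so that $v_iL_K(\widetilde F)v_j=v_iL_K(F)v_j$.
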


\begin{proof}
By Proposition \ref{cateiso}, there exist a graph $F$ and an isomorphism of categories 	$\Phi: L_K(E){\rm -Mod} \longrightarrow L_K(F){\rm -Mod}$ such that $\Phi(Q) \cong \bigoplus_{v\in T}n_v L_K(F)v$ for some finite subset $T\subseteq F^0$ and some positive integers $\{n_v\}_{v\in T}$. The categorical isomorphism yields that ${\rm End}_{L_K(E)}(Q) \cong {\rm End}_{L_K(F)}(\Phi(Q))$.  Write $T = \{v_1, v_2, \dots , v_u\}$ and let $n_i := n_{v_i}$ for all $v_i\in T$.  Note that there are $\sigma = \sum_{i=1}^u n_i$ direct summands in the above decomposition of $\Phi(Q)$.  By  Proposition \ref{End}, ${\rm End}_{L_K(F)}(\Phi(Q))$ is isomorphic to a $\sigma \times \sigma$ matrix ring, with entries described as follows.  The indicated matrices   may be viewed as consisting of rectangular blocks of size $n_{i} \times n_{j}$, where, for $1 \leq i \leq u$, $1\leq j \leq u$, the entries of the $(i,j)$ block are elements of the $K$-vector space $v_i L_K(F) v_j$. 

Let $G$ be the graph formed from $F$ by taking each $v_i\in T$ and attaching a ``head" of length $n_i-1$ of the form
$$\xymatrix{  \bullet^{v_i^{n_i-1}}\ar[r]^{e_i^{n_i-1}} & \cdots \bullet^{v_i^2} \ar[r]^{e_i^2} & \bullet^{v_i^1} \ar[r]^{e_i^1} &\bullet^{v_i}}$$ to $F$. By construction,  $F$ is a CK-subgraph of $G$, so that by Proposition \ref{Lpaproperties} (3)  we may view $L_K(F)$ as a $K$-subalgebra of $L_K(G)$.

For each $1 \leq i \leq u$, and each $1\leq y \leq n_i-1$,  let $p_i^y:= e_i^y \cdots e_i^1$ denote the (unique) path in $G$ having  $s(p_i^y) = v_i^y$, and $r(p_i^y) = v_i$.    Note that, because of the specific configuration of the added vertices and edges used to build $G$ from $F$, repeated application of Remark \ref{oneedge&cor} (1) gives that  $p_i^y (p_i^y)^* = v_i^y$ in $L_K(G)$.
Let $H := T \cup \{v_i^j\mid 1\leq i\leq u, 1\leq j\leq n_i-1\}\subseteq G^0$. Note also that $|H| =   \sum_{1\leq i \leq u} n_i$, which  is precisely $\sigma$. Let $P := \bigoplus_{v\in H}L_K(G)v$ and again using Proposition \ref{End}, we obtain that ${\rm End}_{L_K(G)}(P)$ is isomorphic to the $\sigma \times \sigma$ matrix ring with entries described as follows.   For $1 \leq i, j \leq u$, and $0 \leq y \leq n_i-1$, $0 \leq z \leq n_j-1,$ the entries in the row indexed by $(n_i, y)$ and column indexed by $(n_j, z)$ are elements of $v_i^y L_K(G) v_j^z,$ where    $v_i^0=v_i.$

We now show that these two $\sigma \times \sigma$ matrix rings are isomorphic as $K$-algebras.  To do so, we show first that for each pair  $(n_i, y)$,  $(n_j, z)$ with $1 \leq i, j \leq u$, and $1 \leq y \leq n_i-1$, $1 \leq z \leq n_j-1$, there is a $K$-vector space isomorphism
$$\varphi = \varphi_{(n_i, y), (n_j,z)} :   v_i L_K(F) v_j \rightarrow v_i^y L_K(G) v_j^z.$$ 
      For $r
      \in L_K(F)$ we define
$$ \varphi_{(n_i, y), (n_j,z)}(v_i r v_j) = p_i^yv_i r v_j (p_j^z)^*.$$
Since $L_K(F)$ is a $K$-subalgebra of $L_K(G)$, we easily see that $\varphi$ is $K$-linear.   Further, $\varphi$ is a monomorphism:  if $p_i^yv_i r v_j (p_j^z)^* = 0$ then multiplying on the left by $(p_i^y)^*$ and on the right by $p_j^z$ yields $v_i r v_j = 0$.   To show  $\varphi$ is surjective:  for $v_i^y s v_j^z \in v_i^y L_K(G) v_j^z$ with $s\in L_K(G)$, define $s' = (p_i^y)^*v_i^y  s v_j^z p_j^z \in v_i L_K(G) v_j$.  But  using the fact that there are no paths in $G$ from elements of $T$ to any of the newly added vertices which yield $G$, we have as above that $s'$ may be viewed as an element of $L_K(F)$. Then, using the previous observation that $p_i^y (p_i^y)^* = v_i^y$ in $L_K(G)$, we conclude that $\varphi(s') = p_i^y (p_i^y)^* v_i^y s v_j^z p_j^z (p_j^z)^*  = v_i ^y s v_j^z$, and thus $\varphi$ is surjective.

We now define $\Theta$ to be the $K$-space isomorphism between the two matrix rings induced by applying each of the $\varphi_{(m_i, y), (m_j,z)}$ componentwise.   We need only show that these componentwise isomorphisms respect the corresponding matrix multiplications.    To do so, it suffices to show that the maps behave correctly in each component.    That is, we need only show, for each $n_\ell$ ($1\leq \ell \leq u$)   and each $x$ ($1 \leq x < n_\ell$), that 
$$ \varphi_{(m_i, y), (m_\ell,x)}(v_i r v_\ell) \cdot  \varphi_{(m_\ell, x), (m_j,z)}(v_\ell  r' v_j) =  \varphi_{(m_i, y), (m_j,z)}(v_i r v_\ell r'  v_j).$$ 
But this is immediate, as $(p_\ell^x)^* p_\ell^x = v_\ell$ for each $v_\ell \in T$ and $1 \leq x < n_\ell.$ Thus, we obtain that 
${\rm End}_{L_K(F)}(\Phi(Q))\cong {\rm End}_{L_K(G)}(P)$, and hence ${\rm End}_{L_K(E)}(Q)\cong {\rm End}_{L_K(G)}(P)= {\rm End}_{L_K(G)}(\bigoplus_{v\in H}L_K(G)v)\cong (\sum_{v\in H}v)L_K(G)(\sum_{v\in H}v)$ as $K$-algebras. Moreover, by Theorem \ref{gecorthem} (3), we have that $$(\sum_{v\in H}v)L_K(G)(\sum_{v\in H}v)\cong A_K(\mathcal{G}_G|_H)$$ as $K$-algebras, so ${\rm End}_{L_K(E)}(Q)\cong A_K(\mathcal{G}_G|_H)$, finishing the proof.
\end{proof}

We note that some of the techniques utilized in the proof of Theorem \ref{End(Q)general} were also used in establishing \cite[Theorem 3.8]{an:colpaofgalpa}.   

The following example will help illuminate the ideas of Theorem \ref{End(Q)general}.  

\begin{exas}\label{isoexample}
Let $E$ be the graph
$$E \ \ \ = \ \ \  \xymatrix{  \bullet^{v}\ar@(ul,ur)^{e} \ar[r]^{(\infty)}  &  \bullet^{w}\ar[r]^{(\infty)}&\bullet^{u}} $$
where there are infinitely many edges $\{f_i\mid i\in \mathbb{Z}^+\}$ from $v$ to $w.$  Consider as just one example the  nonzero finitely generated projective left $R$-module $Q = L_K(E)(v- ee^*)\oplus L_K(E)(v - f_1f^*_1)$. Then, by Lemma \ref{projmod}, we have $$Q\cong 2 L_K(E)(v- ee^* - f_1f^*_1)\oplus L_K(E)v\oplus L_K(E)w.$$ Let $T_v =\{e, f_1\}\subset s^{-1}(v)$ and let $F$ be the graph obtained from $E$ by out-splitting the vertex $v$ into the vertices $v_1, v_2$ according to the partition $\mathcal{E}_1:= T_v, \mathcal{E}_2:= s^{-1}(v)\setminus T_v$, pictured here: 
$$F \ \ \ = \ \ \  \xymatrix{  \bullet^{v_1}\ar@(ul,ur)^{e_1}\ar[d]_{e_2}\ar[r]^{f_1}&  \bullet^{w}\ar[r]^{(\infty)}&\bullet^{u}\\ \bullet^{v_2}\ar[ur]_{(\infty)}}.$$
By Corollary~\ref{outsplitcor}, there exists an isomorphism of categories $\Phi: L_K(E){\rm -Mod} \longrightarrow L_K(F){\rm -Mod}$ such that $\Phi(L_K(E)v) = L_K(F)v_1\oplus L_K(F)v_2$, $\Phi(L_K(E)w) = L_K(F)w$ and $\Phi(L_K(E)(v - ee^* - f_1f^*_1)) = L_K(F)v_2$, and hence
 \[\Phi(Q) \cong  2L_K(F)v_2 \oplus (L_K(F)v_1 \oplus L_K(F)v_2) \oplus L_K(F)w   \cong  L_K(F)v_1\oplus 3L_K(F)v_2\oplus L_K(F)w.\] 
 This decomposition dictates that we construct the graph $G$, graphically:
$$G  =  \xymatrix{&&  \bullet^{v_1}\ar@(ul,ur)^{e_1}\ar[d]_{e_2}\ar[r]^{f_1}&  \bullet^{w}\ar[r]^{(\infty)}&\bullet^{u}\\ \bullet^{v^2_2}\ar[r]&\bullet^{v^1_2}\ar[r]&\bullet^{v_2}\ar[ur]_{(\infty)}}.$$
Consider the finitely generated projective left $L_K(G)$-module
$$P := L_K(G)v_1\oplus L_K(G)v^2_2\oplus L_K(G)v^1_2\oplus L_K(G)v_2\oplus L_K(G)w.$$ By Theorem \ref{End(Q)general}, we have ${\rm End}_{L_K(F)}(\Phi(Q))\cong {\rm End}_{L_K(G)}(P)$. For notational simplification, let $R$ and $S$ denote $L_K(F)$ and $L_K(G)$, respectively.  Then   the explicit description of these two algebras as matrix rings as described in the proof of  Theorem \ref{End(Q)general} is:
\medskip
\footnotesize
$$   \left(\begin{tabular}{ccccc} 
$v_1Rv_1$&$v_1Rv_2$&$v_1Rv_2$&$v_1Rv_2$&$v_1Rw$\\
$v_2Rv_1$&$v_2Rv_2$&$v_2Rv_2$&$v_2Rv_2$&$v_2Rw$\\
$v_2Rv_1$&$v_2Rv_2$&$v_2Rv_2$&$v_2Rv_2$&$v_2Rw$\\
$v_2Rv_1$&$v_2Rv_2$&$v_2Rv_2$&$v_2Rv_2$&$v_2Rw$\\
$wRv_1$&$wRv_2$&$wRv_2$&$wRv_2$&$wRw$\\
\end{tabular}\right) 
\cong
 \left(\begin{tabular}{ccccc} 
$v_1Sv_1$&$v_1Sv^2_2$&$v_1Sv^1_2$&$v_1Sv_2$&$v_1Sw$\\
$v^2_2Sv_1$&$v^2_2Sv^2_2$&$v^2_2Sv^1_2$&$v^2_2Sv_2$&$v^2_2Sw$\\
$v^1_2Sv_1$&$v^1_2Sv^2_2$&$v^1_2Sv^1_2$&$v^1_2Sv_2$&$v^1_2Sw$\\
$v_2Sv_1$&$v_2Sv^2_2$&$v_2Sv^1_2$&$v_2Sv_2$&$v_2Sw$\\
$wSv_1$&$wSv^2_2$&$wSv^2_2$&$wSv_2$&$wSw$
\end{tabular}\right)  .$$

\normalsize
\noindent
Thus we get that \[{\rm End}_{L_K(E)}(Q)\cong {\rm End}_{L_K(G)}(P)\cong (\sum_{v\in H}v)L_K(G)(\sum_{v\in H}v)\cong A_K(\mathcal{G}_G|_H),\] where $H =\{v_1, v^2_2, v^1_2, v_2, w\}\subset G^0$.
\smallskip

\end{exas}

\begin{cor}\label{cornercor}
Let $K$ be any field, $E$ an arbitrary graph, and  $\varep$ a nonzero idempotent in $L_K(E)$.   Then the corner $\varep L_K(E) \varep$ of $L_K(E)$  is isomorphic to a Steinberg algebra.  
\end{cor}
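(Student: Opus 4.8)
The plan is to derive Corollary \ref{cornercor} directly from Theorem \ref{End(Q)general} by reinterpreting the corner $\varep L_K(E)\varep$ as an endomorphism ring of a finitely generated projective module. First I would observe that since $L_K(E)$ is a ring with local units, and $\varep$ is a nonzero idempotent, the left $L_K(E)$-module $Q := L_K(E)\varep$ is a nonzero finitely generated projective module: it is generated by the single element $\varep$, and it is a direct summand of $L_K(E)\cdot e$ for any local unit $e$ with $e\varep = \varep = \varep e$, hence projective (one can also note $L_K(E) = L_K(E)\varep \oplus L_K(E)(e-\varep)$ inside $L_K(E)e$). So $Q$ falls within the scope of Theorem \ref{End(Q)general}.

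Next I would invoke Proposition \ref{End} (1), which gives ${\rm Hom}_{L_K(E)}(L_K(E)\varep, L_K(E)\varep) \cong \varep L_K(E)\varep$ as abelian groups; one checks routinely that this identification is in fact a $K$-algebra isomorphism (the composition of $R$-endomorphisms written on the right corresponds to the multiplication in $\varep L_K(E)\varep$, since $f_a : x\mapsto xa$ for $a\in \varep L_K(E)\varep$ satisfies $f_a f_b = f_{ab}$). Thus ${\rm End}_{L_K(E)}(Q) \cong \varep L_K(E)\varep$ as $K$-algebras. Combining this with Theorem \ref{End(Q)general}, which asserts ${\rm End}_{L_K(E)}(Q)$ is isomorphic to a Steinberg algebra $A_K(\mathcal{G}_G|_H)$ for the explicitly constructed graph $G$ and finite vertex subset $H$, we conclude $\varep L_K(E)\varep \cong A_K(\mathcal{G}_G|_H)$, which is precisely the claim.

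Honestly, there is no serious obstacle here — the corollary is essentially a dictionary translation once Theorem \ref{End(Q)general} is in hand. The only point requiring a sentence of care is confirming that $L_K(E)\varep$ is genuinely finitely generated projective in the non-unital setting (using the local-units structure to find a unital corner $eL_K(E)e$ containing $\varep$, inside which $L_K(E)\varep$ is an ordinary finitely generated projective module, and then checking this property is inherited in $L_K(E)$-Mod), and that the group isomorphism of Proposition \ref{End} (1) upgrades to a ring isomorphism. Both are standard; I would state them briefly and cite Proposition \ref{End} and the remarks on modules over rings with local units from the preliminaries.
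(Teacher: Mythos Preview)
Your proposal is correct and follows exactly the paper's own argument: the paper simply notes that $L_K(E)\varep$ is a nonzero finitely generated projective left $L_K(E)$-module and that ${\rm End}_{L_K(E)}(L_K(E)\varep)\cong \varep L_K(E)\varep$, then applies Theorem~\ref{End(Q)general}. Your additional remarks about local units and the ring-versus-group nature of the isomorphism are just the routine justifications the paper leaves implicit.
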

\begin{proof} The statement follows from Theorem \ref{End(Q)general}, as $L_K(E)\varepsilon$ is a nonzero finitely generated projective left $L_K(E)$-module, and ${\rm End}_{L_K(E)}(L_K(E)\varepsilon) \cong \varepsilon L_K(E) \varepsilon.$   
\end{proof}

Let $R$ be a ring with local units. Recall (\cite[Definition 3]{am:mefrwi}) that a module $P\in R{-\rm Mod}$ is said to be \textit{locally projective} if there is a direct system $(P_i)_{i\in I}$ of finitely generated projective direct summands of $P$ (so that $I$ is a directed set, and $P_i$ is a direct summand of $P_j$ whenever $i\leq j$) 
such that $\varinjlim_I P_i = P$.

Write  $\psi_i: P\longrightarrow P_i$  for the projection.  For a locally projective module $P\in R{-\rm Mod}$, the endomorphisms of each $P_{i}$ extend to endomorphisms of $P$ when composed by $\psi_i$, and in this way the endomorphism rings of the components $P_{i}$ form a direct system of subrings of
${\rm End}_{R}(P)$. Their limit $\varinjlim_I {\rm End}_{R}(P_{i})$ consists exactly of those endomorphisms of $P$ which factor through one of the projections $\psi_{i}$. The ring 
$\varinjlim_I {\rm End}_{R}(P_{i})$  has local units because if the endomorphism $\varphi \in \varinjlim_I {\rm End}_{R}(P_{i})$ factors through $\psi_{i}$, then 
 the projection $\psi_{i}$ is a unit for $\varphi$. 

For  rings with local units $R$ and $S$, we call $R$ and $S$ Morita equivalent in case the categories $R$-Mod and $S$-Mod are equivalent. In \cite[Theorem 2.5]{am:mefrwi} \'{A}nh and M\'{a}rki proved that two rings $R$ and $S$ with local units are Morita equivalent if and only if there exists a locally projective generator $P$ for $R{-\rm Mod}$ such that, using the notation above, $S\cong \varinjlim_I {\rm End}_{R}(P_{i})$. Applying this result, Corollary \ref{limofcor} and Theorem \ref{End(Q)general}, we close this section with the following result and subsequent example.

\begin{thm}\label{moquiLeavitt}
Let $K$ be any field and let $A$ be a $K$-algebra with local units which is Morita equivalent to the Leavitt path algebra of a row-countable graph. Then $A$ is isomorphic to a Steinberg algebra.
\end{thm}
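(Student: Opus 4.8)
The plan is to combine the \'{A}nh--M\'{a}rki characterization of Morita equivalence for rings with local units with the limit description of corners obtained in Corollary \ref{limofcor}. Suppose $A$ is Morita equivalent to $L_K(F)$ for a row-countable graph $F$. By \cite[Theorem 2.5]{am:mefrwi} there is a locally projective generator $P \in L_K(F)\text{-Mod}$ with a direct system $(P_i)_{i\in I}$ of finitely generated projective direct summands, $\varinjlim_I P_i = P$, and $A \cong \varinjlim_I {\rm End}_{L_K(F)}(P_i)$. The first task is to replace $F$ by a \emph{countable} graph. Since $P$ is a generator and each $P_i$ is finitely generated, only countably many vertices and edges of $F$ are ``used''; more precisely, since $F$ is row-countable, one can find an increasing sequence of CK-subgraphs $F_n$ of $F$ with each $F_n$ countable whose union is the relevant part of $F$, and since $L_K$ preserves direct limits (Proposition \ref{Lpaproperties} (2)) and the $P_i$ live over finitely many generators, we may assume from the start that $F$ itself is countable. (Alternatively: a row-countable graph is a direct limit in \textbf{CKGr} of its countable CK-subgraphs, and the generator together with all the $P_i$ descend to one such countable subgraph; I would spell this reduction out carefully, as it is where one actually uses the row-countable hypothesis.)

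Next I would put the $P_i$ in standard form. Each $P_i$ is a nonzero finitely generated projective left $L_K(F)$-module, and by the same step-by-step out-splitting process used in the proof of Theorem \ref{End(Q)general} (via Proposition \ref{cateiso}), passing to a suitable out-split graph $G$ of $F$, we may realize ${\rm End}_{L_K(F)}(P_i)$ as a corner of the form $(\sum_{v\in H_i}v)L_K(G)(\sum_{v\in H_i}v)$ for a finite $H_i \subseteq G^0$. The subtlety is that these corners must be arranged \emph{compatibly} along the direct system: since $P_i$ is a direct summand of $P_j$ for $i\leq j$, one wants $H_i$ to embed into $H_j$ (after out-splitting) so that the connecting maps of the system $({\rm End}_{L_K(F)}(P_i))$ are exactly the inclusions of corners $\sum_{v\in H_i}v(-)\sum_{v\in H_i}v \hookrightarrow \sum_{v\in H_j}v(-)\sum_{v\in H_j}v$ inside a single Leavitt path algebra $L_K(G)$ (or a direct limit of such, indexed by finitely many out-splittings, which is again a Leavitt path algebra by Proposition \ref{Lpaproperties}). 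Getting a single graph $G$, a direct system of finite subsets $T_i := H_i$ with $T_i \subseteq T_j$, and identifying the connecting homomorphisms of the endomorphism-ring system with the $\psi_{ij}$ of Corollary \ref{limofcor} is the main obstacle: it requires bookkeeping to ensure the out-splittings done for different $P_i$ are performed consistently (one does them ``once and for all'' at the limit), and care that the direct-summand inclusions $P_i \hookrightarrow P_j$ translate into the vertex-set inclusions $T_i \subseteq T_j$.

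Once that is arranged, the conclusion is immediate: with $G$ a (countable) graph, $T_i \subseteq G^0$ finite and increasing with $\bigcup_i T_i =: T$, Corollary \ref{limofcor} (applied to the trivial one-object direct system of graphs, or directly to $G$ with the chain $(T_i)$) gives
$$A \cong \varinjlim_I {\rm End}_{L_K(G)}(P_i) \cong \varinjlim_I \bigl(\textstyle\sum_{v\in T_i}v\bigr)L_K(G)\bigl(\sum_{v\in T_i}v\bigr) \cong A_K(\mathcal{G}_G|_T),$$
and $A_K(\mathcal{G}_G|_T)$ is a Steinberg algebra by Theorem \ref{gecorthem} (2) (note $\mathcal{G}_G|_T$ is an ample groupoid by Theorem \ref{gecorthem} (1)). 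I would close by remarking that when the index set $I$ is finite, $P$ itself is finitely generated projective and the statement is just Theorem \ref{End(Q)general}; the genuinely new content is the passage to the direct limit, handled by Corollary \ref{limofcor}.
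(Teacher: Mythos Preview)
Your overall strategy (\'{A}nh--M\'{a}rki plus Corollary \ref{limofcor}) is right, but there is a genuine gap, and it sits exactly where you flagged ``the main obstacle.'' The paper does \emph{not} try to make the out-splittings for the various $P_i$ compatible; instead it eliminates the need for out-splittings altogether. The key step you are missing is the Drinen--Tomforde desingularization: starting from the row-countable graph $F$, one passes to a \emph{row-finite} graph $E$ with $L_K(E)$ Morita equivalent to $L_K(F)$, and works over $L_K(E)$ from then on. Because $E$ is row-finite, Lemma \ref{projmod} gives each $P_i$ directly as $\bigoplus_{v\in V_i} n^i_v L_K(E)v$ with \emph{no} terms of the form $L_K(E)(v-\sum ee^*)$. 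Thus no out-splittings are required, and the ``head-attaching'' construction from the proof of Theorem \ref{End(Q)general} already yields corners $(\sum_{v\in T_i}v)L_K(G_i)(\sum_{v\in T_i}v)$ with $G_i$ a CK-subgraph of $G_j$ and $T_i\subseteq T_j$ for $i\leq j$; Corollary \ref{limofcor} then applies to the honest direct system $(G_i)_{i\in I}$ in \textbf{CKGr}.

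Two related points. First, your proposed use of the row-countable hypothesis is off: row-countable does not mean (or reduce to) countable, and the paper never passes to a countable subgraph. The hypothesis is needed precisely because the Drinen--Tomforde desingularization requires an enumeration of $s^{-1}(v)$ at each infinite emitter, i.e.\ it works exactly for row-countable graphs. Second, your compatibility worry for out-splittings is real and not merely bookkeeping: the out-splitting needed for $P_i$ depends on which finite sets $T_v\subset s^{-1}(v)$ appear in its decomposition, and for $i\leq j$ there is no reason these choices can be made coherently into a single graph $G$ (or even a directed system in \textbf{CKGr}), since different $P_j$ may force incompatible partitions at the same infinite emitter. The desingularization trick is precisely what sidesteps this.
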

\begin{proof}
Let $A$ be a $K$-algebra with local units which is Morita equivalent to the Leavitt path algebra $L_K(F)$ of a row-countable graph $F$. Let $E$ be the Drinen-Tomforde desingularization of $F$ (see, e.g. \cite{dm:tcaoag} or \cite[p. 434]{ap:tlpaoag08}). Since the Drinen-Tomforde desingularization of a row-countable graph is always row-finite, $E$ is a row-finite graph. By \cite[Theorem 5.2]{ap:tlpaoag08} (or \cite[Theorem 14]{ar:reeofrg}), the Leavitt path algebras $L_K(E)$ and $L_K(F)$ are Morita equivalent, whence $A$ is Morita equivalent to $L_K(E)$. Then, by \cite[Theorem 2.5]{am:mefrwi}, there exists a locally projective generator $P$ for $L_K(E){-\rm Mod}$ such that, using the notation above, $A\cong \varinjlim_I {\rm End}_{L_K(E)}(P_{i})$ as $K$-algebras. 

For each $i\in I$, by Lemma \ref{projmod}, we have that $P_i \cong \bigoplus_{v\in V_i} n^i_vL_K(E)v$ for some finite subset $V_i$ of $E^0$ and some positive integers $(n^i_v)_{v\in V_i}$. We note that for all $j\geq i$ in $I$, we have that $P_i$ is a direct summand of $P_j$, whence we obtain a direct sum decomposition of $P_j$ as follows: $P_j \cong \bigoplus_{v\in V_j} n^j_vL_K(E)v$ for some finite subset $V_j$ of $E^0$ and some positive integers $(n^j_v)_{v\in V_j}$ satisfying $V_i\subseteq V_j$ and $n^i_v\leq n^j_v$ for all $v\in V_i$.

Let $G_i$ be the graph formed from $E$ by taking each $v\in V_i$ and attaching a ``head" of length $n^i_v-1$ of the form
$$\xymatrix{  \bullet^{v^{n^i_v-1}}\ar[r]^{e_v^{n^i_v-1}} & \cdots \bullet^{v^2} \ar[r]^{e_v^2} & \bullet^{v^1} \ar[r]^{e_v^1} &\bullet^{v}}$$ to $E$. Put $$T_i := V_i \cup \{v^j\mid v\in V_i, 1\leq j\leq n^i_v-1\}\subseteq G_i^0.$$ As was shown in the proof of Theorem \ref{End(Q)general}, we have that $${\rm End}_{L_K(E)}(P_{i}) \cong (\sum_{v\in T_i}v)L_K(G_i)(\sum_{v\in T_i}v)$$ as $K$-algebras.

By the  above note and the construction of graphs $G_i$ $(i\in I)$, we have that $G_i$ is a CK-subgraph of $G_j$ and $T_i\subseteq T_j$ whenever $i\leq j$; that means, we have that $(G_i)_{i\in I}$ is a direct system in \textbf{CKGr} and the set $\{T_i\subseteq G^0_i\mid i\in I\}$ satisfies the condition as in Corollary \ref{limofcor}. Let $G$ be the direct limit for the system $(G_i)_{i\in I}$ in \textbf{CKGr} with canonical morphisms $\eta_i: G_i\longrightarrow G$, and $T= \bigcup_{i\in I}\eta^0_i(T_i)\subseteq G^0$. By Corollary \ref{limofcor}, we get that 
\begin{center}
$A\cong \varinjlim_{I}{\rm End}_{L_K(E)}(P_{i})\cong \varinjlim_{I} (\sum_{v\in T_i}v)L_K(G_i)(\sum_{v\in T_i}v)\cong A_K(\mathcal{G}_G|_T)$ 
\end{center} as $K$-algebras, thus showing that $A$ is isomorphic to a Steinberg algebra,  finishing the proof.
\end{proof}	

\begin{exas}
We note that the row-countable graph $C$ of Example \ref{cornernotleav} provides an example of a $K$-algebra which is Morita equivalent to the Leavitt path algebra of a row-countable graph, but is not isomorphic to a Leavitt path algebra.   Specifically, because $w_n = e_n^* v e_n$ for each $n\in \mathbb{N}$, we see that each vertex of $C$ lies in the ideal $L_K(C) v L_K(C)$ of $L_K(C)$, whence $v$ is a full idempotent in $L_K(C)$.  Thus  \cite[Theorem 2.5]{am:mefrwi} applies, and we conclude  that $v L_K(C)v$ is Morita equivalent to $L_K(C)$.   But, as shown in Example \ref{cornernotleav}, the algebra $v L_K(C)v$ is not isomorphic to a Leavitt path algebra.  
\end{exas}

\section{Corners of graph $C^*$-algebras}
Arklint and Ruiz \cite{ar:cocka},  and Arklint, Gabe and Ruiz  \cite{ArklintGabeRuiz}  have established (among many other things) that for a countable graph $E$ having finitely many vertices, any corner $pC^*(E)p$ of the graph $C^*$-algebra $C^*(E)$ by a projection $p$ is isomorphic to a graph $C^*$-algebra. The goal of this section is to show that for a countable graph $E$, any corner $pC^*(E)p$ of the graph $C^*$-algebra $C^*(E)$ by a projection $p$ is isomorphic to a $C^*$-algebra of an ample groupoid (Theorem \ref{CorC-algtheo}).

Following \cite[Theorem 1.2]{kpr:ckaodg}, if $E$ is a countable graph, then the \textit{graph $C^*$-algebra} $C^*(E)$ is the universal $C^*$-algebra generated by mutually orthogonal projections $\{p_v\mid v\in E^0\}$ and partial isometries with mutually orthogonal ranges $\{s_e\mid e\in E^1\}$ satisfying
\begin{itemize}
	\item[(1)] $s^*_e s_e = p_{r(e)}$ for all $e\in E^1$,
	\item[(2)] $p_v= \sum_{e\in s^{-1}(v)}s_es^*_e$ for any regular vertex $v$,
	\item[(3)] $s_es^*_e\leq p_{s(e)}$ for all $e\in E^1$.
\end{itemize}
If $\mu = e_1\cdots e_n\in E^*$ and $n\geq 2$, then we let $s_{\mu}:= s_{e_1}\cdots s_{e_n}$. Likewise, we let $s_v : = p_v$ if $v\in E^0$. It follows from \cite[Lemma 1.1]{kpr:ckaodg} that $$C^*(E) = \overline{\text{Span}} \{s_{\mu}s^*_{\nu} \mid \mu, \nu \in E^*,\, r(\mu) = r(\nu)\}.$$ 
Take the projection $p = \sum_{v\in H}p_v$ where $H$ is a finite subset of $E^0$. It is crucial for us to observe that 
\begin{equation*}
p s_{\mu}s^*_{\nu}=  \left\{
\begin{array}{lcl}
s_{\mu}s^*_{\nu}&  & \text{if } s(\mu)\in H  \\
0&  & \text{otherwise}%
\end{array}%
\right.
\end{equation*}%
so that 
$$pC^*(E)p = \overline{\text{Span}} \{s_{\mu}s^*_{\nu} \mid \mu, \nu \in E^*,\, s(\mu), s(\nu)\in H, \, r(\mu) = r(\nu)\}.$$

We recall for reader's convenience that a map between  $C^*$-algebras is a $C^*$-algebra isomorphism if and only if it is an isomorphism of $*$-algebras (see, for example, \cite[Theorem 2.1.7]{Murphy}). Accordingly, throughout this section the symbol $\cong$ will mean ``isomorphism as $C^*$-algebras".

In \cite{kprr:ggacka}, Kumjian, Pask, Raeburn, and Renault defined the groupoid $C^*$-algebra $C^*(\mathcal{G}_E)$ associated to a row-finite graph with no sources $E$. In \cite{bcw:gaaoe}, Brownlowe, Carlsen and Whittaker explained this construction in the case when $E$ is a countable graph. Also, in \cite[Proposition 2.2]{bcw:gaaoe}, Brownlowe, Carlsen and Whittaker showed that if $E$ is a countable graph, then there is a unique isomorphism $\pi: C^*(E)\longrightarrow C^*(\mathcal{G}_E)$ such that $\pi(p_v) = 1_{Z(v, v)}$ for all $v\in E^0$ and $\pi(s_e) = 1_{Z(e, r(e))}$ for all $e\in E^1$.

\begin{prop}\label{corC-alg} Let $E$ be a countable graph, $H$ a nonempty finite subset of $E^0$, and $p = \sum_{v\in H}p_v\in C^*(E)$. Then $pC^*(E)p \cong C^*(\mathcal{G}_E|_H)$.	
\end{prop}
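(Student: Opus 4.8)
### Proof proposal

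The plan is to mimic, in the $C^*$-algebraic setting, the strategy used to prove Theorem~\ref{gecorthem}(3). The key algebraic input is already in place: the isomorphism $\pi\colon C^*(E)\longrightarrow C^*(\mathcal{G}_E)$ of Brownlowe--Carlsen--Whittaker sends $p_v$ to $1_{Z(v,v)}$, and $s_e\mapsto 1_{Z(e,r(e))}$, $s_e^*\mapsto 1_{Z(r(e),e)}$. Since this is a $C^*$-algebra isomorphism, it carries $p=\sum_{v\in H}p_v$ to the projection $q:=\sum_{v\in H}1_{Z(v,v)}$ in $C^*(\mathcal{G}_E)$, and restricts to an isometric $*$-isomorphism of corners $pC^*(E)p\cong qC^*(\mathcal{G}_E)q$. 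So the whole problem reduces to identifying the corner $qC^*(\mathcal{G}_E)q$ with the groupoid $C^*$-algebra $C^*(\mathcal{G}_E|_H)$.

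First I would record that $\mathcal{G}_E|_H$ is an open ample subgroupoid of $\mathcal{G}_E$: this is exactly Theorem~\ref{gecorthem}(1), where it is shown that $\mathcal{G}_E|_H = r_{\mathcal{G}_E}^{-1}(U)\cap s_{\mathcal{G}_E}^{-1}(U)$ for $U=\bigcup_{v\in H}Z(v,v)$, a compact open subset of $\mathcal{G}_E^{(0)}$ (here I use that $H$ is finite and that the $Z(v,v)$ are disjoint compact open sets). In particular $\mathcal{G}_E|_H$ is itself a Hausdorff ample groupoid whose unit space is the compact open set $U$, so $C^*(\mathcal{G}_E|_H)$ makes sense; moreover $1_U = q$ is a projection in $C^*(\mathcal{G}_E)$ which is the identity of $C^*(\mathcal{G}_E|_H)$ when the latter is realized inside $C^*(\mathcal{G}_E)$. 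Next I would invoke the general principle that for an ample (or more generally étale) groupoid $\mathcal{G}$ and an open set $U\subseteq\mathcal{G}^{(0)}$, the full groupoid $C^*$-algebra of the restriction $\mathcal{G}_U=r^{-1}(U)\cap s^{-1}(U)$ embeds into $C^*(\mathcal{G})$ via extension-by-zero of functions, and when $U$ is clopen this embedding identifies $C^*(\mathcal{G}_U)$ with the hereditary (corner) subalgebra $1_U\, C^*(\mathcal{G})\, 1_U$; this is the $C^*$-analogue of Proposition~\ref{cornerStein}(2) (compare the reduction-to-an-open-subset results for groupoid $C^*$-algebras, e.g.\ in Brown--Clark--Farthing--Sims or Renault). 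Concretely, on the dense $*$-subalgebra level, $\mathcal{G}_E|_H$ has a basis of compact open bisections $B$ contained in $r^{-1}(U)\cap s^{-1}(U)$, and for each such $B$ one has $1_B = 1_U\,1_B\,1_U$ in $A_{\mathbb C}(\mathcal{G}_E)\subseteq C^*(\mathcal{G}_E)$; conversely for any compact open bisection $V$ of $\mathcal{G}_E$, the set $UVU$ is a compact open bisection lying in $\mathcal{G}_E|_H$ (exactly the argument in the proof of Proposition~\ref{cornerStein}(2): $W:=V^{-1}V\cup VV^{-1}\subseteq U$ since $H$ is finite, hence $1_U1_V1_U = \sum_{v,w\in H}1_{Z(v,v)VZ(w,w)}\in A_{\mathbb C}(\mathcal{G}_E|_H)$). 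Thus the dense $*$-subalgebra $q A_{\mathbb C}(\mathcal{G}_E) q$ coincides with $A_{\mathbb C}(\mathcal{G}_E|_H)$, and taking closures gives $q C^*(\mathcal{G}_E) q = C^*(\mathcal{G}_E|_H)$ provided one knows the inclusion of dense subalgebras extends to an isomorphism of the completions.

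That last point is where the main obstacle lies: I must be careful about \emph{which} $C^*$-completion is meant and about whether the corner of a completion equals the completion of the corner. The safe route is: (i) since $\mathcal{G}_E|_H$ is open in $\mathcal{G}_E$, extension-by-zero gives an injective $*$-homomorphism $C^*(\mathcal{G}_E|_H)\hookrightarrow C^*(\mathcal{G}_E)$ (fullness and injectivity on the full $C^*$-completion follow from disintegration of representations, or alternatively one may work with the reduced completions, which for these graph groupoids agree with the full ones — $\mathcal{G}_E$ is amenable when $E$ is countable, cf.\ \cite{kprr:ggacka,bcw:gaaoe}); (ii) its image is contained in $q C^*(\mathcal{G}_E) q$ because every generator $1_B$ of the dense subalgebra satisfies $q1_Bq=1_B$; (iii) the image is dense in $q C^*(\mathcal{G}_E) q$ because $q A_{\mathbb C}(\mathcal{G}_E) q$ is dense in $q C^*(\mathcal{G}_E) q$ (multiplication by the projection $q$ is continuous) and, as shown above, equals $A_{\mathbb C}(\mathcal{G}_E|_H)$; and (iv) the image is a $C^*$-subalgebra, hence closed, so it is all of $q C^*(\mathcal{G}_E)q$. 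Combining this with the BCW isomorphism $p C^*(E) p \cong q C^*(\mathcal{G}_E) q$ yields $p C^*(E) p \cong C^*(\mathcal{G}_E|_H)$, which is the assertion. I expect the write-up to spend most of its effort on step (i)--(iv), i.e.\ justifying that extension-by-zero along an open clopen-in-unit-space subgroupoid realizes the groupoid $C^*$-algebra of the restriction as the corner $1_U C^*(\mathcal{G}) 1_U$; the purely combinatorial bisection bookkeeping is routine and parallels Proposition~\ref{cornerStein}(2) verbatim.
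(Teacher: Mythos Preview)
Your proposal is correct and follows essentially the same strategy as the paper: transport the corner through the Brownlowe--Carlsen--Whittaker isomorphism $\pi\colon C^*(E)\to C^*(\mathcal{G}_E)$ and identify the resulting corner of $C^*(\mathcal{G}_E)$ with $C^*(\mathcal{G}_E|_H)$ using the bisection bookkeeping from Theorem~\ref{gecorthem} and Proposition~\ref{cornerStein}(2). The paper's own proof is terser---it matches the dense spanning sets $\{s_\mu s_\nu^*:s(\mu),s(\nu)\in H\}$ and $\{1_{Z(\mu,\nu,F)}:s(\mu),s(\nu)\in H\}$ under $\pi$ and invokes \cite[Proposition~4.2]{cfst:aggolpa} for density in $C^*(\mathcal{G}_E|_H)$---whereas you spell out more carefully the one genuine analytic point the paper leaves implicit, namely that extension-by-zero gives an isometric embedding $C^*(\mathcal{G}_E|_H)\hookrightarrow C^*(\mathcal{G}_E)$ so that the two completions in play agree; your steps (i)--(iv) handle this cleanly.
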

\begin{proof} We note firstly that $$pC^*(E)p = \overline{\text{Span}} \{s_{\mu}s^*_{\nu} \mid \mu, \nu \in E^*,\, s(\mu), s(\nu)\in H, \, r(\mu) = r(\nu)\}.$$
By Theorem \ref{gecorthem}, the sets $Z(\mu, \nu, F)$, where $\mu, \nu\in E^*$ with $s(\mu), s(\nu)\in H$, $r(\mu) = r(\nu)$, and a finite subset $F \subseteq s^{-1}(r(\mu))$, constitute a base of compact open bisections for the topology of $\mathcal{G}_E|_H$, and by \cite[Proposition 4.2]{cfst:aggolpa} the span of the characteristic functions of those sets is dense in $C^*(\mathcal{G}_E|_H)$. From this and by the above isomorphism $\pi: C^*(E)\longrightarrow C^*(\mathcal{G}_E)$, we immediately obtain that $pC^*(E)p \cong C^*(\mathcal{G}_E|_H)$, thus finishing the proof.	
\end{proof}

For a $C^*$-algebra $A$, let $M_{\infty}(A)$ be the directed union of $M_n(A)$ $(n\in \mathbb{N})$, where the transition maps $M_n(A)\longrightarrow M_{n+1}(A)$ are given by $x\longmapsto \left(
\begin{array}{cc}
x & 0 \\
0 & 0 \\
\end{array}
\right).$ We define $V(A)$ to be the set of Murray-von Neumann equivalence classes (denoted $[P]$) of projections in $M_{\infty}(A)$; see \cite[4.6.2 and 4.6.4]{b:ktfoa}. If $P$ and $Q$ are projections in
$M_{\infty}(A)$, we will use the symbol $P \sim Q$ to indicate that they are (Murray-von Neumann) equivalent, that is, there is a partial isometry $W \in M_{\infty}(A)$ such that $W^*W = P$ and $WW^* = Q$. We will write $P\oplus Q$ for the block-diagonal matrix  $\text{diag}(P, Q)$, and we will denote by $n\cdot P$ the direct sum of $n$ copies of $P$. We endow $V(A)$ with the structure of a commutative monoid by imposing the operation \[[P] + [Q] = [P\oplus Q]\] for all projections $P$ and $Q\in M_{\infty}(A)$.

In \cite[Theorem 7.3]{tomf:utaisflpa}, Tomforde showed that if $E$ is a countable graph, then there exists an injective algebra $*$-homomorphism $\iota_E: L_{\mathbb{C}}(E)\longrightarrow C^*(E)$
with $\iota_E(v) = p_v$ and $\iota_E(e) = s_e$ for all $v\in E^0$ and $e\in E^1$. Hay et. al \cite{hlmrt:nsktflpa} proved the following interesting result.

\begin{thm}[{\cite[Corollary 3.5]{hlmrt:nsktflpa}}]\label{HLMRTthm}
	For any countable graph $E$, the natural injection 	$\iota_E: L_{\mathbb{C}}(E)\longrightarrow C^*(E)$ induces a monoid
	isomorphism $V(\iota_E): V(L_{\mathbb{C}}(E))\longrightarrow V(C^*(E))$. 
	
\end{thm}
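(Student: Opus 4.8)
The plan is to factor $V(\iota_E)$ through the graph monoid $M_E$, using that both $V(L_{\mathbb{C}}(E))$ and $V(C^*(E))$ are naturally identified with $M_E$. By Theorem \ref{AG-HLMRTthm} there is a monoid isomorphism $\gamma_E\colon M_E \to V(L_{\mathbb{C}}(E))$ with $\gamma_E([v]) = [L_{\mathbb{C}}(E)v]$ and $\gamma_E([q_Z]) = [L_{\mathbb{C}}(E)(v-\sum_{e\in Z}ee^*)]$, and $M_E$ is generated as a monoid by the symbols $[v]$, $v\in E^0$, together with the $[q_Z]$, where $v$ is an infinite emitter and $Z\subseteq s^{-1}(v)$ is finite and nonempty. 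The first step is to record that $V(\iota_E)$ is a well-defined monoid homomorphism: an injective $*$-homomorphism carries an idempotent matrix over $L_{\mathbb{C}}(E)$ to an idempotent matrix over $C^*(E)$, every idempotent in a $C^*$-algebra is similar to a projection, and conjugation of idempotents (hence Murray--von Neumann equivalence of the associated projections) is preserved; since isomorphic finitely generated projective modules correspond to conjugate idempotents, this descends to a homomorphism of abelian monoids with $V(\iota_E)([L_{\mathbb{C}}(E)v]) = [\iota_E(v)] = [p_v]$ and $V(\iota_E)([L_{\mathbb{C}}(E)(v-\sum_{e\in Z}ee^*)]) = [p_v - \sum_{e\in Z}s_es_e^*]$, the latter being a genuine projection because $\sum_{e\in Z}s_es_e^* \le p_v$.

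The heart of the argument is a $C^*$-algebraic counterpart of Theorem \ref{AG-HLMRTthm}: there is a monoid isomorphism $\gamma_E^{\ast}\colon M_E \to V(C^*(E))$ with $\gamma_E^{\ast}([v]) = [p_v]$ and $\gamma_E^{\ast}([q_Z]) = [p_v - \sum_{e\in Z}s_es_e^*]$. When $E$ is row-finite this is the Ara--Moreno--Pardo computation of the nonstable $K$-theory of graph $C^*$-algebras \cite{amp:nktfga}. For an arbitrary countable graph $E$, one passes to the Drinen--Tomforde desingularization $F$: $F$ is a row-finite countable graph, the desingularization induces a monoid isomorphism $M_E \cong M_F$, and $C^*(E)$ is isomorphic to a full hereditary corner of $C^*(F)$, hence is strongly Morita equivalent to $C^*(F)$. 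Since strong Morita equivalence of ($\sigma$-unital) $C^*$-algebras induces an isomorphism on $V(-)$, one gets $V(C^*(E)) \cong V(C^*(F)) \cong M_F \cong M_E$, and by tracking the classes $[p_v]$ and $[p_v - \sum_{e\in Z}s_es_e^*]$ through these identifications one checks that the resulting isomorphism has the asserted form on generators.

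With both isomorphisms in hand it remains only to verify $V(\iota_E)\circ\gamma_E = \gamma_E^{\ast}$. Both sides are monoid homomorphisms $M_E \to V(C^*(E))$, so it suffices to compare them on the generators: by the computations above, $V(\iota_E)(\gamma_E([v])) = [p_v] = \gamma_E^{\ast}([v])$ and $V(\iota_E)(\gamma_E([q_Z])) = [p_v - \sum_{e\in Z}s_es_e^*] = \gamma_E^{\ast}([q_Z])$. Hence $V(\iota_E) = \gamma_E^{\ast}\circ\gamma_E^{-1}$ is a monoid isomorphism, as required.

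I expect the main obstacle to be the second step, namely pinning down the explicit isomorphism $V(C^*(E)) \cong M_E$ for a general countable graph: the row-finite case is classical, but the desingularization reduction requires care about what ``full corner'' and ``Morita equivalence'' mean in the non-unital setting (the projection $\sum_{v\in E^0}p_v$ typically lives only in the multiplier algebra) and about the naturality of $M_E \cong M_F$, so that the final identification is genuinely compatible with the generators $[p_v]$ and $[p_v - \sum_{e\in Z}s_es_e^*]$. An alternative avoiding desingularization would be to prove surjectivity and injectivity of $V(\iota_E)$ directly from the density of $L_{\mathbb{C}}(E)$ in $C^*(E)$ together with the known ideal and projection structure of graph $C^*$-algebras, but this appears to need essentially the same structural input.
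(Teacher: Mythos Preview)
The paper does not give its own proof of this statement: it is quoted verbatim as \cite[Corollary~3.5]{hlmrt:nsktflpa} and used as a black box to derive Corollary~\ref{projections}. So there is nothing in the paper to compare your argument against.

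Your outline is nonetheless the standard route and is essentially how the cited reference proceeds: factor through the graph monoid $M_E$, invoke the algebraic isomorphism $M_E\cong \mathcal V(L_{\mathbb C}(E))$ (this is exactly Theorem~\ref{AG-HLMRTthm}, which the present paper also imports from \cite{ag:lpaosg,hlmrt:nsktflpa}), and match it with the $C^*$-isomorphism $M_E\cong V(C^*(E))$. You are right that the only nontrivial input is this last isomorphism for arbitrary countable $E$, and that the usual way to obtain it is via Drinen--Tomforde desingularization together with the row-finite computation of \cite{amp:nktfga}; the compatibility on generators does require checking that under the full-corner embedding $C^*(E)\hookrightarrow C^*(F)$ the projections $p_v$ and $p_v-\sum_{e\in Z}s_es_e^*$ land in the expected classes, which is where the work in \cite{hlmrt:nsktflpa} lies. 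One small wording issue: in your first paragraph you speak of ``conjugate idempotents'', but what is actually needed (and what your argument uses) is Murray--von Neumann equivalence of idempotents in $M_\infty$, which is what corresponds to isomorphism of finitely generated projectives over rings with local units; conjugacy is a strictly stronger relation and is not the right one here.
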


Using Theorem \ref{HLMRTthm} and Lemma~\ref{projmod}, we obtain the following useful corollary.

\begin{cor}\label{projections}
	For any countable graph $E$, every nonzero projection $p\in C^*(E)$ may be written in the form	$$p\sim (\bigoplus_{v\in V} n_v (p_v - \sum_{e\in T_v}s_es^*_e)) \oplus (\bigoplus_{w\in W} n_w p_w) \quad\text{in}\quad M_{\infty}(C^*(E)),$$ where $V$ and $W$ are finite subsets of $E^0$, each $v\in V$ is an infinite emitter, each $T_v$ is a nonempty finite subset of $s^{-1}(v)$, and the numbers $n_v, n_w$ are positive integers.
\end{cor}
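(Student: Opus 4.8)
The plan is to transfer the module decomposition of Lemma \ref{projmod} across the monoid isomorphism of Theorem \ref{HLMRTthm}, using the standard dictionary between finitely generated projective modules and idempotents in $M_\infty$. First I would record that dictionary. For a ring $R$ with local units, the assignment $[f]\mapsto[Rf]$ (where $f$ is an idempotent in $M_\infty(R)$) is a monoid isomorphism between the monoid of Murray--von Neumann equivalence classes of idempotents of $M_\infty(R)$ and $\mathcal V(R)$, and two idempotents $P,Q\in M_\infty(R)$ have the same class precisely when $P\sim Q$ in $M_\infty(R)$, since $P\oplus 0_n\sim P$ for every $n$. The analogous statement holds for a $C^*$-algebra, with idempotents replaced by projections. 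Under these identifications, the $1\times 1$ idempotent $v-\sum_{e\in T}ee^*$ of $L_{\mathbb C}(E)$ corresponds to $[L_{\mathbb C}(E)(v-\sum_{e\in T}ee^*)]$ and $v\in E^0$ to $[L_{\mathbb C}(E)v]$; moreover, since $\iota_E$ is a $*$-homomorphism with $\iota_E(v)=p_v$ and $\iota_E(e)=s_e$, it carries the self-adjoint idempotent $v-\sum_{e\in T}ee^*$ to the projection $p_v-\sum_{e\in T}s_es_e^*$ and $v$ to $p_v$, and the induced map on classes of $1\times 1$ idempotents is exactly $V(\iota_E)$.

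Now let $p\in C^*(E)$ be a nonzero projection, so $[p]$ is a nonzero element of $V(C^*(E))$. By Theorem \ref{HLMRTthm} there is a nonzero $x\in V(L_{\mathbb C}(E))$ with $V(\iota_E)(x)=[p]$; identifying $V(L_{\mathbb C}(E))$ with $\mathcal V(L_{\mathbb C}(E))$ via the dictionary, write $x=[Q]$ for a nonzero finitely generated projective left $L_{\mathbb C}(E)$-module $Q$. Lemma \ref{projmod} gives
$$Q\cong\Bigl(\bigoplus_{v\in V}n_vL_{\mathbb C}(E)(v-\textstyle\sum_{e\in T_v}ee^*)\Bigr)\oplus\Bigl(\bigoplus_{w\in W}n_wL_{\mathbb C}(E)w\Bigr),$$
with $V,W$ finite subsets of $E^0$, each $v\in V$ an infinite emitter, each $T_v$ a nonempty finite subset of $s^{-1}(v)$, and $n_v,n_w$ positive integers. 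Reading this equality in $V(L_{\mathbb C}(E))$ (using $[P\oplus P']=[P]+[P']$ and $[nM]=n[M]$) and then applying $V(\iota_E)$ entrywise yields
$$[p]=\sum_{v\in V}n_v\bigl[p_v-\textstyle\sum_{e\in T_v}s_es_e^*\bigr]+\sum_{w\in W}n_w[p_w]$$
in $V(C^*(E))$. The right-hand side is the class of the block-diagonal projection $\bigl(\bigoplus_{v\in V}n_v(p_v-\sum_{e\in T_v}s_es_e^*)\bigr)\oplus\bigl(\bigoplus_{w\in W}n_wp_w\bigr)$ in $M_\infty(C^*(E))$, so the last clause of the dictionary gives the asserted Murray--von Neumann equivalence.

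I do not anticipate a genuine obstacle: the statement is essentially a bookkeeping translation of Lemma \ref{projmod} through Theorem \ref{HLMRTthm}. The only points requiring a little care are the compatibility of the several incarnations of the $V$- and $\mathcal V$-monoids involved, in particular that $V(\iota_E)$ restricted to classes of $1\times 1$ idempotents is just the map induced by $\iota_E$, and that $\iota_E$, being a $*$-homomorphism, sends each self-adjoint idempotent $v-\sum_{e\in T_v}ee^*$ to a bona fide projection of $C^*(E)$ (which is nonzero exactly because $v$ is an infinite emitter). Once these identifications are made explicit, the conclusion is immediate.
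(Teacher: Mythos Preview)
Your proposal is correct and is exactly the approach the paper takes: the paper states the corollary as an immediate consequence of Theorem~\ref{HLMRTthm} together with Lemma~\ref{projmod}, and you have simply spelled out the standard dictionary between $\mathcal V(L_{\mathbb C}(E))$ and $V(C^*(E))$ that makes this transfer go through.
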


We proceed with the next fact.

\begin{lem}\label{outsplitlem}
	Let $E$ be a countable graph, $v$ an infinite emitter and $T_v$ a nonempty finite subset of $s^{-1}(v)$. Put $\mathcal{E}_1 = T_v$ and $\mathcal{E}_2 = s^{-1}(v)\setminus T_v$. Let $F$ be the graph obtained by out-splitting the vertex $v$ into the vertices $v^1, v^2$ according to  the partition $\mathcal{E}_1, \mathcal{E}_2$. Then there exists a $C^*$-algebra isomorphism
	$\Phi_v: C^*(E)\longrightarrow C^*(F)$ with the following properties:
	\begin{itemize}	
		\item[(1)] $\Phi_v(p_v) = p_{v^1}+ p_{v^2}$ and $\Phi_v(p_w) = p_w$ for all $w\in E^0\setminus \{v\}$,
		\item[(2)] For all $w\in E^0\setminus \{v\}$ and all finite subsets $W\subseteq s^{-1}(w)$ with $v\notin r_E(W)$,
		$$\Phi_v(p_w - \sum_{e\in W}s_es_e^*) = p_w - \sum_{e\in W}s_es_e^*,$$
		\item[(3)] For all $w\in E^0\setminus \{v\}$ and any finite subset $W\subseteq s^{-1}(w)$ with $v\in r_E(W)$, there
		exists a finite subset $W'\subseteq s_F^{-1}(w)$ such that
		$$\Phi_v(p_w - \sum_{e\in W}s_es_e^*) = p_w - \sum_{e\in W'}s_es_e^*,$$
		\item[(4)] $\Phi_v(p_v - \sum_{e\in T_v}s_es_e^*) = p_{v^2}$.
	\end{itemize}
\end{lem}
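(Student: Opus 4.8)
The plan is to carry out, in the $C^*$-setting, the same argument that proves Corollary \ref{outsplitcor} (whose engine is Proposition \ref{outsplitprop}), replacing the $\mathbb{Z}$-graded uniqueness theorem for Leavitt path algebras by an explicit construction of a two-sided inverse. Note first that $F$ is again a countable graph, so $C^*(F)$ is defined. I would define elements $\{Q_u\mid u\in E^0\}$ and $\{T_e\mid e\in E^1\}$ of $C^*(F)$ by
\begin{equation*}
Q_u=\begin{cases} p_{v^1}+p_{v^2} & \text{if } u=v,\\ p_u & \text{otherwise,}\end{cases}\qquad
T_e=\begin{cases} s_{e^1}+s_{e^2} & \text{if } e\in r^{-1}(v),\\ s_e & \text{otherwise,}\end{cases}
\end{equation*}
and $T_{e^*}:=T_e^*$. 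Repeating verbatim the relations check in the proof of \cite[Theorem 2.8]{aalp:tcqflpa} (see Proposition \ref{outsplitprop}), and adding the $C^*$-specific verifications that the $Q_u$ are mutually orthogonal projections, the $T_e$ are partial isometries with mutually orthogonal ranges, and $T_eT_e^*\le Q_{s(e)}$, one checks that $\{Q_u,T_e\}$ is a Cuntz--Krieger $E$-family in $C^*(F)$ with all $Q_u\neq 0$. Here the hypothesis that $s^{-1}(v)=T_v\sqcup(s^{-1}(v)\setminus T_v)$ with $T_v$ finite is exactly what makes $v^1$ a regular vertex of $F$ (and $v^2$ an infinite emitter), so that the only Cuntz--Krieger relation $Q_w=\sum_{e\in s^{-1}(w)}T_eT_e^*$ that must be verified is at regular vertices $w$ of $E$, where it holds. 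By the universal property of $C^*(E)$ there is a $*$-homomorphism $\Phi_v\colon C^*(E)\to C^*(F)$ with $\Phi_v(p_u)=Q_u$ and $\Phi_v(s_e)=T_e$.

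Symmetrically, I would build a Cuntz--Krieger $F$-family in $C^*(E)$: set $P_{v^1}:=\sum_{e\in T_v}s_es_e^*$, $P_{v^2}:=p_v-\sum_{e\in T_v}s_es_e^*$, $P_w:=p_w$ for $w\neq v$, and for each $e\in r^{-1}(v)$ set $S_{e^1}:=s_eP_{v^1}$, $S_{e^2}:=s_eP_{v^2}$, leaving all other edges of $F$ as the original $s_f$. One verifies this is a Cuntz--Krieger $F$-family with all vertex projections nonzero, where $P_{v^2}\neq 0$ because $v$ is an infinite emitter; the universal property of $C^*(F)$ then gives a $*$-homomorphism $\Psi_v\colon C^*(F)\to C^*(E)$. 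A routine check on generators shows $\Psi_v\circ\Phi_v={\rm id}_{C^*(E)}$ and $\Phi_v\circ\Psi_v={\rm id}_{C^*(F)}$; the only identities that are not immediate are $\Phi_v\Psi_v(p_{v^1})=\sum_{g\in T_v}T_gT_g^*=p_{v^1}$ (which uses again that $v^1$ is regular in $F$) and $\Psi_v\Phi_v(s_e)=S_{e^1}+S_{e^2}=s_e(P_{v^1}+P_{v^2})=s_e$ for $e\in r^{-1}(v)$. Hence $\Phi_v$ is a $C^*$-algebra isomorphism. (Alternatively, injectivity of $\Phi_v$ could be obtained from the gauge-invariant uniqueness theorem, since the gauge action on $C^*(F)$ fixes each $Q_u$ and scales each $T_e$.)

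Properties (1)--(4) then follow exactly as in Corollary \ref{outsplitcor}, the computations now taking place in $C^*(F)$: (1) is the definition of $Q_u$; for (2), every $e\in W$ satisfies $r(e)\neq v$, so $T_e=s_e$ and $\Phi_v(p_w-\sum_{e\in W}s_es_e^*)=p_w-\sum_{e\in W}s_es_e^*$; for (3) and (4), writing $H=\{e\in W\ (\text{resp. } e\in T_v): r(e)=v\}$ and using $s_{e^1}s_{e^2}^*=s_{e^2}s_{e^1}^*=0$ (since $r_F(e^1)=v^1$ and $r_F(e^2)=v^2$ are orthogonal), one gets $T_eT_e^*=s_{e^1}s_{e^1}^*+s_{e^2}s_{e^2}^*$ for $e\in H$, whence $\Phi_v(p_w-\sum_{e\in W}s_es_e^*)=p_w-\sum_{e\in W'}s_es_e^*$ with $W'=(W\setminus H)\sqcup\{e^1,e^2\mid e\in H\}\subseteq s_F^{-1}(w)$, and $\Phi_v(p_v-\sum_{e\in T_v}s_es_e^*)=p_{v^1}+p_{v^2}-\sum_{e\in T_v\setminus H}s_es_e^*-\sum_{e\in H}(s_{e^1}s_{e^1}^*+s_{e^2}s_{e^2}^*)=p_{v^2}$, using that $s_F^{-1}(v^1)=(T_v\setminus H)\sqcup\{e^1,e^2\mid e\in H\}$ and $v^1$ is regular.

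I expect the main obstacle to be bookkeeping rather than anything conceptual: one must correctly identify the edge sets $s_F^{-1}(v^1)$ and $s_F^{-1}(v^2)$ of the out-split graph (keeping straight which of the two new vertices is regular and which is an infinite emitter, and how loops at $v$ get duplicated), because the verification of the Cuntz--Krieger relations for both families, the inverse check, and properties (1)--(4) all rest on these descriptions. Everything else is routine manipulation with the defining relations of a graph $C^*$-algebra.
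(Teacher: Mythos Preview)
Your proposal is correct and follows essentially the same route as the paper: define $\Phi_v$ on generators exactly as indicated, then verify properties (1)--(4) by the computations of Corollary~\ref{outsplitcor}. The only difference is that the paper obtains the $C^*$-isomorphism by citing \cite[Theorem~3.2]{bp:feoga} (which uses the gauge-invariant uniqueness theorem you mention as an alternative), whereas you prove it directly by exhibiting the inverse $\Psi_v$; this makes your argument more self-contained at the cost of the extra bookkeeping you correctly anticipate.
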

\begin{proof}
	By \cite[Theorem 3.2]{bp:feoga}, the map $\Phi_v: C^*(E)\longrightarrow C^*(F)$, defined by 
	\begin{equation*}
	\Phi_v(p_u)=  \left\{
	\begin{array}{lcl}
	p_{v^1} + p_{v^2}&  & \text{if } u = v , \\
	p_u&  & \text{otherwise \ \ ,}%
	\end{array}%
	\right.
	\end{equation*}%
	
	and 	
	\begin{equation*}
	\Phi_v(s_e)=  \left\{
	\begin{array}{lcl}
	s_{e^1} + s_{e^2}&  & \text{if } e\in r^{-1}(v)\\
	s_e&  & \text{otherwise}  \ \ , \ \ \ \ \   
	\end{array}%
	\right.
	\end{equation*}%
	extends to a  $C^*$-algebra isomorphism. Then, similarly as in the proof of Corollary \ref{outsplitcor}, we get immediately that $\Phi_v$ satisfies the properties as in the statement, finishing the proof.
\end{proof}

Using Corollary \ref{projections} and  Lemma \ref{outsplitlem}  we obtain the following result which plays an important role in the proof of the main theorem below.

\begin{prop}\label{repofproj}
	Let $E$ be a countable graph and $p$ a nonzero projection in $C^*(E)$. 	Then there exists a graph $F$ with the following properties:
	\begin{itemize}	
		\item[(1)] $F$ is obtained from $E$ in some step-by-step process of out-splittings.  
		\item[(2)] There exists a $C^*$-algebra isomorphism $\Phi: C^*(E) \longrightarrow C^*(F)$ such that $\Phi(p) \sim \bigoplus_{v\in H}n_v p_v$ in $M_{\infty}(C^*(F))$ for some finite subset $H\subseteq F^0$ and some positive integers $\{n_v\}_{v\in H}$.
	\end{itemize}
\end{prop}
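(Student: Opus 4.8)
The plan is to mirror the proof of Proposition~\ref{cateiso} verbatim, replacing Lemma~\ref{projmod} by Corollary~\ref{projections} and Corollary~\ref{outsplitcor} by Lemma~\ref{outsplitlem}. First I would apply Corollary~\ref{projections} to write
$$p\sim \Bigl(\bigoplus_{v\in V} n_v (p_v - \sum_{e\in T_v}s_es^*_e)\Bigr) \oplus \Bigl(\bigoplus_{w\in W} n_w p_w\Bigr)\quad\text{in}\quad M_{\infty}(C^*(E)),$$
where $V$ is a finite set of infinite emitters, each $T_v$ is a nonempty finite subset of $s^{-1}(v)$, and $W\subseteq E^0$ is finite. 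Enumerate $V=\{v_1,\dots,v_n\}$.

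Next I would proceed by induction on $n$, peeling off one vertex at a time. Put $\mathcal{E}_1=T_{v_1}$ and $\mathcal{E}_2=s^{-1}(v_1)\setminus T_{v_1}$, and let $E_1$ be the out-split graph of $E$ at $v_1$ with respect to this partition. By Lemma~\ref{outsplitlem} there is a $C^*$-algebra isomorphism $\Phi_{v_1}:C^*(E)\longrightarrow C^*(E_1)$ carrying $p_{v_1}$ to $p_{v_1^1}+p_{v_1^2}$, carrying $p_{v_1}-\sum_{e\in T_{v_1}}s_es_e^*$ to $p_{v_1^2}$, carrying $p_w$ to $p_w$ for $w\neq v_1$, and carrying each $p_w-\sum_{e\in W}s_es_e^*$ (for $w\neq v_1$) to an expression of the same form $p_w-\sum_{e\in W'}s_es_e^*$ over $E_1$ (using properties (2) and (3)). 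Since $\Phi_{v_1}$ extends entrywise to an isomorphism $M_\infty(C^*(E))\to M_\infty(C^*(E_1))$ and Murray--von Neumann equivalence of projections is preserved by $*$-isomorphisms, applying $\Phi_{v_1}$ to the displayed decomposition of $p$ gives a new decomposition of $\Phi_{v_1}(p)$ in $M_\infty(C^*(E_1))$ in which $v_1$ no longer contributes a term of the form $p_{v}-\sum s_es_e^*$: the two summands $n_{v_1}(p_{v_1}-\sum_{e\in T_{v_1}}s_es_e^*)$ become $n_{v_1}\cdot p_{v_1^2}$, a cyclic term, and the former $p_{v_1}$ summand (if $v_1\in W$) becomes $p_{v_1^1}+p_{v_1^2}$. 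Exactly as in Proposition~\ref{cateiso}, I would absorb all the $p_{v_1^2}$'s into a single cyclic block and observe that the remaining infinite-emitter terms are indexed by $\{v_2,\dots,v_n\}$, each over a nonempty finite subset of $s^{-1}_{E_1}(v_i)$. This completes one step, and after $n$ steps one arrives at a graph $F$, obtained from $E$ by a finite sequence of out-splittings, together with a $C^*$-algebra isomorphism $\Phi:C^*(E)\longrightarrow C^*(F)$ (the composite of the $\Phi_{v_i}$) such that $\Phi(p)\sim\bigoplus_{v\in H}n_v p_v$ in $M_\infty(C^*(F))$ for some finite $H\subseteq F^0$ and positive integers $\{n_v\}_{v\in H}$. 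This establishes (1) and (2).

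There is essentially no obstacle here beyond bookkeeping; the one point requiring a little care is the same one that arises in Proposition~\ref{cateiso}: when $v_1\in W$ one must correctly merge the image of the cyclic summand $p_{v_1}$ (which splits as $p_{v_1^1}+p_{v_1^2}$) with the $n_{v_1}$ copies of $p_{v_1^2}$ coming from the out-split of the non-cyclic term, so that the multiplicities $n_v$ are updated consistently at each stage. The fact that, after out-splitting, the set $T_{v_i}$ for the not-yet-processed vertices lifts to a nonempty finite subset of the source set in the new graph is immediate since out-splitting at $v_1$ does not alter the edges emitted by $v_i$ for $i\geq 2$ (those $v_i$ are distinct infinite emitters, and $v_1$'s out-split only subdivides $v_1$ itself). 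Hence the induction goes through cleanly, and the proof is complete.

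\begin{proof}
By Corollary~\ref{projections}, we may write
$$p\sim \Bigl(\bigoplus_{v\in V} n_v (p_v - \sum_{e\in T_v}s_es^*_e)\Bigr) \oplus \Bigl(\bigoplus_{w\in W} m_w p_w\Bigr)\quad\text{in}\quad M_{\infty}(C^*(E)),$$
where $V$ and $W$ are finite subsets of $E^0$, each $v\in V$ is an infinite emitter, each $T_v$ is a nonempty finite subset of $s^{-1}(v)$, and the numbers $n_v, m_w$ are positive integers. Write $V = \{v_1, v_2, \dots, v_n\}$.

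Let $E_1$ be the graph obtained from $E$ by out-splitting the vertex $v_1$ into the vertices $v_1^1, v_1^2$ according to the partition $\mathcal{E}_1 = T_{v_1}$, $\mathcal{E}_2 = s^{-1}(v_1)\setminus T_{v_1}$. By Lemma~\ref{outsplitlem}, there is a $C^*$-algebra isomorphism $\Phi_{v_1}: C^*(E)\longrightarrow C^*(E_1)$ with the following properties: $\Phi_{v_1}(p_{v_1}) = p_{v_1^1}+p_{v_1^2}$ and $\Phi_{v_1}(p_w) = p_w$ for all $w\in E^0\setminus\{v_1\}$; for all $w\in E^0\setminus\{v_1\}$ and all finite $W\subseteq s^{-1}(w)$ with $v_1\notin r_E(W)$, $\Phi_{v_1}(p_w - \sum_{e\in W}s_es_e^*) = p_w - \sum_{e\in W}s_es_e^*$; for all $w\in E^0\setminus\{v_1\}$ and all finite $W\subseteq s^{-1}(w)$ with $v_1\in r_E(W)$, there exists a finite $W'\subseteq s_{E_1}^{-1}(w)$ with $\Phi_{v_1}(p_w - \sum_{e\in W}s_es_e^*) = p_w - \sum_{e\in W'}s_es_e^*$; and $\Phi_{v_1}(p_{v_1} - \sum_{e\in T_{v_1}}s_es_e^*) = p_{v_1^2}$.

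The isomorphism $\Phi_{v_1}$ extends entrywise to an isomorphism $M_\infty(C^*(E))\longrightarrow M_\infty(C^*(E_1))$, which preserves Murray--von Neumann equivalence of projections. Applying it to the displayed decomposition of $p$, and using the properties above, we get that $\Phi_{v_1}(p)$ is Murray--von Neumann equivalent in $M_\infty(C^*(E_1))$ to
$$\Bigl(\bigoplus_{i=2}^n n_{v_i}\bigl(p_{v_i} - \sum_{e\in T'_{v_i}}s_es_e^*\bigr)\Bigr)\oplus n_{v_1}\cdot p_{v_1^2} \oplus \Bigl(\bigoplus_{w\in W''} m''_w p_w\Bigr),$$
where $T'_{v_i}$ is a nonempty finite subset of $s_{E_1}^{-1}(v_i)$ for each $2\leq i\leq n$, and $(W'', m''_w)$ is obtained from $(W, m_w)$ by: setting $W'' = W$ and $m''_w = m_w$ if $v_1\notin W$; and setting $W'' = \{v_1^1, v_1^2\}\cup(W\setminus\{v_1\})$ with $m''_w = m_w$ for $w\in W\setminus\{v_1\}$ and $m''_{v_1^i} = m_{v_1}$ if $v_1\in W$. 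Put $W' := W''\cup\{v_1^2\}$, $m'_w := m''_w$ for $w\in W'\setminus\{v_1^2\}$, and $m'_{v_1^2} := n_{v_1} + m''_{v_1^2}$ (with the convention $m''_{v_1^2} = 0$ if $v_1\notin W$). Then
$$\Phi_{v_1}(p)\sim \Bigl(\bigoplus_{i=2}^n n_{v_i}\bigl(p_{v_i} - \sum_{e\in T'_{v_i}}s_es_e^*\bigr)\Bigr)\oplus \Bigl(\bigoplus_{w\in W'} m'_w p_w\Bigr)\quad\text{in}\quad M_{\infty}(C^*(E_1)).$$

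Repeating this process, next out-splitting $v_2$ in $E_1$ with respect to $\mathcal{E}_1 = T'_{v_2}$, $\mathcal{E}_2 = s_{E_1}^{-1}(v_2)\setminus T'_{v_2}$, and so on, after $n$ steps we arrive at a graph $F$ obtained from $E$ by a step-by-step process of out-splittings, together with a $C^*$-algebra isomorphism $\Phi: C^*(E)\longrightarrow C^*(F)$ (the composite of the $\Phi_{v_i}$) such that $\Phi(p)\sim\bigoplus_{v\in H}n_v p_v$ in $M_\infty(C^*(F))$ for some finite subset $H\subseteq F^0$ and some positive integers $\{n_v\}_{v\in H}$, finishing the proof.
\end{proof}
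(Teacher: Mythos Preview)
Your proof is correct and essentially identical to the paper's: you invoke Corollary~\ref{projections} and Lemma~\ref{outsplitlem} in place of Lemma~\ref{projmod} and Corollary~\ref{outsplitcor}, then carry out the same step-by-step out-splitting induction with the same bookkeeping of $W''$, $W'$, and multiplicities. The only addition is that you make explicit that $\Phi_{v_1}$ extends entrywise to $M_\infty$ and preserves Murray--von Neumann equivalence, which the paper leaves implicit.
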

\begin{proof}
	By Corollary \ref{projections}, we have that
	$$p\sim (\bigoplus_{v\in V} n_v (p_v - \sum_{e\in T_v}s_es^*_e)) \oplus (\bigoplus_{w\in W} n_w p_w) \quad\text{in}\quad M_{\infty}(C^*(E)),$$ where $V$ and $W$ are some finite subsets of $E^0$, each $v\in V$ is an infinite emitter, each $T_v$ is some nonempty finite subset of $s^{-1}(v)$, and  $n_v, m_w$ are some positive integers.	Write $V = \{v_1, v_2, \hdots, v_n\}$. Let $E_1$ be the graph obtained from $E$ by out-splitting the vertex $v_1$ into the vertices $v_1^1, v_1^2$ according to the partition $\mathcal{E}_1 = T_{v_1}$, $\mathcal{E}_2 = s^{-1}(v_1)\setminus T_{v_1}$. By Lemma \ref{outsplitlem}, there exists a $C^*$-algebra isomorphism
	$\Phi_{v_1}: C^*(E)\longrightarrow C^*(E_1)$ with the following properties:
	\begin{itemize}	
		\item[(i)] $\Phi_{v_1}(p_{v_1}) = p_{v^1_1}+ p_{v^2_1}$ and $\Phi_v(p_w) = p_w$ for all $w\in E^0\setminus \{v_1\}$,
		\item[(ii)] for all $w\in E^0\setminus \{v_1\}$ and all finite subsets $W\subseteq s^{-1}(w)$ with $v_1\notin r_E(W)$,
		$$\Phi_{v_1}(p_w - \sum_{e\in W}s_es_e^*) = p_w - \sum_{e\in W}s_es_e^*,$$
		\item[(iii)] for all $w\in E^0\setminus \{v_1\}$ and all finite subsets $W\subseteq s^{-1}(w)$ with $v\in r_E(W)$, there
		exists a finite subset $W'\subseteq s_{E_1}^{-1}(w)$ such that
		$$\Phi_{v_1}(p_w - \sum_{e\in W}s_es_e^*) = p_w - \sum_{e\in W'}s_es_e^*, \ \mbox{and}$$
		\item[(iv)] $\Phi_{v_1}(p_{v_1} - \sum_{e\in T_{v_1}}s_es_e^*) = p_{v_1^2}$.
	\end{itemize}
	We then have that 
	$$\Phi_{v_1}(p) \sim (\bigoplus^n_{i=2}n_{v_i}(p_{v_i} - \sum_{e\in T'_{v_i}}s_es_e^*))\oplus n_{v_1} p_{v_1^2} \oplus (\bigoplus_{w\in W''} m''_w p_w)$$ in $M_{\infty}(C^*(E_1))$, where $T'_{v_i}$ is some nonempty finite subset of $s^{-1}_{E_1}(v_i)$, and $W'', m''_w$ are defined by setting $$W'' = W \mbox{ if } v_1\notin W $$
	\noindent
	(in this case we have that $ m''_w = m_w$  for all $ w\in W$), and 
	$$W'' = \{v^1_1, v^2_1\} \cup W\setminus\{v_1\} \mbox{ otherwise }$$
	\noindent
	(in this case we have that $ m''_w = m_w  $ for all $ w\in W\setminus\{v_1\} $ and $m''_{v_1^i} = m_{v_1}).$
	
	Let $W' := W'' \cup \{v_1^2\}$, $m'_w := m''_w$ for all $w\in W'\setminus\{v^2_1\}$, and $m'_{v^2_1} := n_{v_1} + m''_{v_1^i}$. We then get that
	$$\Phi_{v_1}(p)\sim (\bigoplus^n_{i=2}n_{v_i} (p_{v_i} - \sum_{e\in T'_{v_i}}s_es_e^*))\oplus (\bigoplus_{w\in W'} m'_w p_w)$$ in $M_{\infty}(C^*(E_1))$.
	
	We repeat the process described above, starting with the graph $E_2$ which is obtained from $E_1$ by out-splitting $v_2$ into $v_2^1$ and $v_2^2$ with respect to  $\mathcal{E}_1 = T'_{v_2}$, $\mathcal{E}_2 = s^{-1}_{E_1}(v_2)\setminus T'_{v_2}$. We see that after $n$ steps we arrive at the  graph $F$ of the statement and a $C^*$-algebra isomorphism
	$\Phi: C^*(E)\longrightarrow C^*(F)$ such that $\Phi(p) \sim \bigoplus_{v\in H}n_v p_v$ in $M_{\infty}(C^*(F))$ for some finite subset $H\subseteq F^0$ and some positive integers $\{n_v\}_{v\in H}$, finishing the proof.	
\end{proof}	

We are now in position to achieve the  main result of this section. Before doing so, we need to recall the concept of the stabilization of a graph; see \cite[Definition 9.4]{at:iameoga}. Given a graph $E$, let $SE$ be the graph formed from $E$ by taking
each $v\in E^0$ and attaching an infinite ``head" of the form

$$\xymatrix{\cdots  \ar[r]^{e_v^{4}} & \bullet^{v^{3}}\ar[r]^{e_v^{3}} &  \bullet^{v^2} \ar[r]^{e_v^2} & \bullet^{v^1} \ar[r]^{e_v^1} &\bullet^{v}}$$ to $E$. We call $SE$ the \textit{stabilization} of $E$.

\begin{thm}\label{CorC-algtheo}
Let $E$ be a countable graph and $p$ a nonzero projection in $C^*(E)$. There exists a countable graph $F$ with the following properties:
\begin{itemize}	
		\item[(1)] $F$ is obtained from $E$ in some step-by-step process of out-splittings.  
		\item[(2)] $pC^*(E)p \cong C^*(\mathcal{G}_{SF}|_H)$ for some finite subset $H\subseteq (SF)^0$.
	\end{itemize} 	
\end{thm}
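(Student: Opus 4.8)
The plan is to run the argument of Theorem \ref{End(Q)general} in the $C^*$-setting, with Proposition \ref{repofproj} playing the role of Proposition \ref{cateiso}, the corners $p_vC^*(\cdot)p_w$ playing the role of $vL_K(\cdot)w$, and Proposition \ref{corC-alg} playing the role of Theorem \ref{gecorthem}(3). First I would apply Proposition \ref{repofproj}: there is a countable graph $F$ (countable since obtained from the countable graph $E$ by finitely many out-splittings, each preserving countability) obtained from $E$ by a step-by-step process of out-splittings, together with a $C^*$-isomorphism $\Phi\colon C^*(E)\to C^*(F)$ with $\Phi(p)\sim q':=\bigoplus_{v\in H_0}n_vp_v$ in $M_\infty(C^*(F))$, where $\emptyset\neq H_0\subseteq F^0$ is finite (nonempty because $p\neq 0$) and the $n_v$ are positive integers. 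Since $pC^*(E)p\cong\Phi(p)C^*(F)\Phi(p)$ via $\Phi$, and Murray--von Neumann equivalent projections have isomorphic corners (conjugate by the connecting partial isometry), we get $pC^*(E)p\cong q'M_\infty(C^*(F))q'=q'M_\sigma(C^*(F))q'$, where $\sigma:=\sum_{v\in H_0}n_v$; this last algebra is the algebra of $\sigma\times\sigma$ matrices with rows and columns indexed by pairs $(v,a)$ ($v\in H_0$, $1\leq a\leq n_v$) and with $((v,a),(w,b))$ entry in $p_vC^*(F)p_w$. Taking this $F$ as the graph of the statement, it remains to produce a finite subset $H\subseteq(SF)^0$ with $q'M_\sigma(C^*(F))q'\cong C^*(\mathcal{G}_{SF}|_H)$.

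Next I would set up $SF$ and the relevant finite heads. Since $F$ is a CK-subgraph of $SF$ (stabilization only adds edges emitted by head vertices and does not change the edges emitted by vertices of $F^0$), the universal property of $C^*(F)$ and the gauge-invariant uniqueness theorem realize $C^*(F)$ as a $C^*$-subalgebra of $C^*(SF)$ with $p_v\mapsto p_v$ and $s_e\mapsto s_e$ for $v\in F^0$, $e\in F^1$ (alternatively, one may invoke the standard identification $C^*(SF)\cong C^*(F)\otimes\mathcal{K}$). For $v\in F^0$ and $y\geq 1$ let $\mu_v^y:=e_v^ye_v^{y-1}\cdots e_v^1\in(SF)^*$ be the unique path from $v^y$ to $v$ inside the head over $v$, and set $\mu_v^0:=v$. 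Since each head vertex $v^j$ ($j\geq 1$) is a regular vertex emitting a single edge, relation (2) for $C^*(SF)$ and an easy induction give $s_{\mu_v^y}s_{\mu_v^y}^*=p_{v^y}$, while $s_{\mu_v^y}^*s_{\mu_v^y}=p_v$. Put $H:=\{\,v^y\mid v\in H_0,\ 0\leq y\leq n_v-1\,\}\subseteq(SF)^0$, a nonempty finite set with $|H|=\sigma$, and $q:=\sum_{u\in H}p_u\in C^*(SF)$. Because no path in $SF$ starts at a vertex of $F^0$ and reaches a head vertex, one has $p_vC^*(SF)p_w=p_vC^*(F)p_w$ for all $v,w\in F^0$; hence, the $p_u$ ($u\in H$) being mutually orthogonal projections, $qC^*(SF)q$ is the $\sigma\times\sigma$ matrix algebra with rows and columns indexed by $H$ and with $(v^{a-1},w^{b-1})$ entry in $p_{v^{a-1}}C^*(SF)p_{w^{b-1}}$.

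The core step is to define $\Theta\colon q'M_\sigma(C^*(F))q'\to qC^*(SF)q$ componentwise by $x\mapsto s_{\mu_v^{a-1}}\,x\,s_{\mu_w^{b-1}}^*$ on the $((v,a),(w,b))$ component, matching $(v,a)$ with $v^{a-1}$. Using $s_{\mu_v^{a-1}}^*s_{\mu_v^{a-1}}=p_v$, $s_{\mu_v^{a-1}}s_{\mu_v^{a-1}}^*=p_{v^{a-1}}$ and $p_vC^*(SF)p_w=p_vC^*(F)p_w$, each component map is a linear bijection onto $p_{v^{a-1}}C^*(SF)p_{w^{b-1}}$ (with inverse $z\mapsto s_{\mu_v^{a-1}}^*\,z\,s_{\mu_w^{b-1}}$), it is $*$-preserving, and it respects matrix multiplication because $s_{\mu_w^{b-1}}^*s_{\mu_w^{b-1}}=p_w$ collapses the middle factor --- this is exactly the computation carried out in the proof of Theorem \ref{End(Q)general}. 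Hence $\Theta$ is a $*$-algebra isomorphism between the $C^*$-algebras $q'M_\sigma(C^*(F))q'$ and $qC^*(SF)q$, thus a $C^*$-isomorphism. Finally, $SF$ is countable and $H$ is a nonempty finite subset of $(SF)^0$, so Proposition \ref{corC-alg} gives $qC^*(SF)q\cong C^*(\mathcal{G}_{SF}|_H)$; combining the isomorphisms yields $pC^*(E)p\cong C^*(\mathcal{G}_{SF}|_H)$ with $F$ obtained from $E$ by out-splittings, as required.

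I expect the one genuine obstacle to be the passage to the $C^*$-setting exactly where the algebraic proof uses Proposition \ref{Lpaproperties}(3) for free: showing that $C^*(F)$ embeds \emph{faithfully} into $C^*(SF)$ --- unlike for Leavitt path algebras, a $*$-homomorphism with dense range need not be injective, so one must appeal to the gauge-invariant uniqueness theorem or to $C^*(SF)\cong C^*(F)\otimes\mathcal{K}$ --- together with the companion identity $p_vC^*(SF)p_w=p_vC^*(F)p_w$. Everything else is a faithful transcription of the proof of Theorem \ref{End(Q)general}, replacing $pq^*$ by $s_\mu s_\nu^*$ and Leavitt path algebra corners by graph $C^*$-algebra corners.
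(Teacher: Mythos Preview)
Your argument is correct and follows the same overall architecture as the paper: apply Proposition \ref{repofproj}, pass to the stabilization $SF$, land on a corner $(\sum_{u\in H}p_u)C^*(SF)(\sum_{u\in H}p_u)$, and finish with Proposition \ref{corC-alg}. The difference lies in how the middle step is carried out.

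The paper does not build the matrix-to-corner isomorphism by hand. Instead it quotes \cite[Proposition 9.8]{at:iameoga}, which gives a $C^*$-isomorphism $\phi\colon C^*(F)\otimes\mathcal{K}\to C^*(SF)$ sending $p_v\otimes E_{11}\mapsto p_v$ and $p_v\otimes E_{(k+1)(k+1)}\mapsto p_{v^k}$. Under $\phi$ the projection $q=\Phi(p)$ (viewed in $C^*(F)\otimes\mathcal{K}$) becomes Murray--von Neumann equivalent to $\sum_{u\in H}p_u$ in $C^*(SF)$, and the chain $pC^*(E)p\cong qC^*(F)q\cong q(C^*(F)\otimes\mathcal{K})q\cong \phi(q)C^*(SF)\phi(q)\cong(\sum_{u\in H}p_u)C^*(SF)(\sum_{u\in H}p_u)$ follows. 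Your route instead replays the explicit $\Theta$-construction from the proof of Theorem \ref{End(Q)general} inside $C^*(SF)$, which forces you to supply the faithful embedding $C^*(F)\hookrightarrow C^*(SF)$ and the identity $p_vC^*(SF)p_w=p_vC^*(F)p_w$ for $v,w\in F^0$; you identify this correctly, and gauge-invariant uniqueness (together with the fact that $F$ is a CK-subgraph of $SF$ and that paths in $SF$ starting at vertices of $F^0$ stay in $F$) disposes of both. The paper's approach is shorter because a single citation absorbs all of this; yours is more self-contained and makes the parallel with the Leavitt path algebra argument completely transparent.
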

\begin{proof}
	By Proposition \ref{repofproj}, there exists a graph $F$ which 	 is obtained from $E$ in some step-by-step process of out-splittings, and a $C^*$-algebra isomorphism $\Phi: C^*(E)\longrightarrow C^*(F)$ such that $q:=\Phi(p) \sim \bigoplus_{v\in T}n_v p_v$ in $M_{\infty}(C^*(F))$ for some finite subset $T\subseteq F^0$ and some positive integers $\{n_v\}_{v\in T}$.
	
	Let $\mathcal{K}$ be  the compact operators on a separable infinite-dimensional Hilbert space $H$. By \cite[Proposition 9.8]{at:iameoga} and its proof, we have that $C^*(F)\otimes \mathcal{K} = \overline{M_{\infty}(C^*(F))}$, and there exists a $C^*$-algebra isomorphism $\phi: C^*(F)\otimes \mathcal{K}\longrightarrow C^*(SF)$ such that $\phi(p_v\otimes E_{11}) = p_v$ and $\phi(p_v\otimes E_{(k+1)(k+1)}) = p_{v^k}$ for all $v\in F^0$ and $k\geq 1$. Write $T = \{v_1, v_2, \dots , v_u\}$ and let $n_i := n_{v_i}$ for all $v_i\in T$.
	Let $H := T \cup \{v_i^k\mid 1\leq i\leq u, 1\leq k\leq n_i-1\}\subseteq (SF)^0$. We then have $\phi(q)$ is Murray-von Neumman equivalent to $\sum_{v\in H}p_v$ in $C^*(SF)$, and hence $\phi(q)C^*(SF)\phi(q) \cong (\sum_{v\in H}p_v) C^*(SF)(\sum_{v\in H}p_v)$. By Proposition~\ref{corC-alg}, $(\sum_{v\in H}p_v) C^*(SF)(\sum_{v\in H}p_v) \cong C^*(\mathcal{G}_{SF}|_{H})$, so
	$\phi(q)C^*(SF)\phi(q) \cong C^*(\mathcal{G}_{SF}|_{H})$.
	
	On the other hand, we note that $$pC^*(E)p \cong qC^*(F)q \cong q(C^*(F)\otimes \mathcal{K})q\cong \phi(q)C^*(SF)\phi(q),$$ so $pC^*(E)p\cong C^*(\mathcal{G}_{SF}|_{H}),$ thus finishing the proof.
\end{proof}

\vskip 0.5 cm \vskip 0.5cm {

\end{document}